\title{Toward the Combinatorial Limit Theory of Free Words} 
\author{Danny}{Rorabaugh}    
\date{2015}                      
\abstract{abstract}  
\begin{document}

\chapter{Background and Introduction} \label{INTRO}




\section{Discrete Structures and Combinatorics}

Any mathematical structure that is enumerable or noncontinuous can be referred to as discrete.
Discrete mathematicians, therefore, usually study such things as sets, integers, groups, graphs, logical statements, or geometric objects.
However, even uncountable or continuous objects such as topological spaces, contours, differential equations, or dynamical systems can be discretized or otherwise studied by their discrete properties.

Perhaps the structure most commonly identified with discrete mathematics is a graph. 
A graph $G$ consists of a set $V(G)$ of points, called vertices or nodes, and a set $E(G)$ of unordered pairs of points, called edges.
It is often represented visually, with points or circles as vertices, and line segments that connect the points as edges.



Though the term ``discrete mathematics'' can technically encompass any study of discrete objects, including much of algebra, number theory, logic, and theoretical computer science, it is more commonly used as a synonym for combinatorics. 

Combinatorialists are, generally speaking, interested in counting.
Of the nature of combinatorics, \textcite{Cam-94} says: ``Its tentacles stretch into virtually all corners of mathematics.'' 
Though some mathematical structures are inherently more discrete, and thus more susceptible to combinatorial analysis, any structure can be the subject of combinatorial investigation.
Two particular combinatorial perspectives, Ramsey theory and extremal theory, are especially important for the present work. 

\subsection{Ramsey Theory}
\textcite{R-29} proved that, for any fixed $r,n,\mu \in \Z^+$, every sufficiently large set $\Gamma$ with its $r$-subsets partitioned into $\mu$ classes is guaranteed to have an $n$-element subset $\Delta_n \subseteq \Gamma$ such that all the $r$-subsets of $\Delta_n$ are in the same class.
This was the advent of a major branch of combinatorics known as Ramsey theory.
If a given property holds for every sufficiently ``large'' structure within a class of structures, then a combinatorialist might investigate how large a structure must be to guarantee the property.

\subsection{Extremal Theory}
In combinatorial optimization, we look at structures subject to given constraints and ask: ``What are the optimal values obtained by such-and-such function within these constraints?'' or ``Which structures satisfy the constraints and optimize the function?''
That is, we might try to find extremal values and a characterization of the structures which obtain the extremal values. 
A foundational example of this school of thought comes from \textcite{T-41}, who classified graphs on $n$ vertices with the highest possible number of edges but with no set of $k+1$ vertices for which all possible edges are present. 

\section{Words}

Our present interest is in words--not the linguistic units with lexical value, but rather strings of symbols or letters.
We are interested in words as abstract discrete structures.
There are many different ways discrete mathematicians view words: as sequences, permutations, elements of a monoid, etc. 
Within each perspective there is a distinct set of axioms for how words are built and how they interact.
Consequently, the theory and applications that arise for each perspective are drastically different.
One ubiquitous approach for studying discrete structures is to consider the substructures.
In the case of sequences or permutations, the ``subword'' generally consists of a subsequence of not-necessarily consecutive terms.

Some number theorists and combinatorialists study sequences of numbers $\big[$for example: $1, 1, 2, 3, 5, 8, 13, 21, 34, \ldots\big]$. 
A numeric list might be generated by a recursive formula $\big[f(1) = f(2) = 1, f(n+2) = f(n+1) + f(n)\big]$, an explicit formula $\big[f(n) = \frac{1}{2^n\sqrt{5}}\left(\left(1+\sqrt{5}\right)^n - \left(1 - \sqrt{5}\right)^n\right)\big]$, or enumeration of a particular class of structures $\big[f(n)$ is the number of way to tile a $2 \times (n-1)$ rectangle with $2 \times 1$ dominoes$\big]$.
See the Online Encyclopedia of Integer Sequences \parencite{OEIS} for many such sequences $\big[$including oeis.org/A000045, the Fibonacci sequence$\big]$.
There are natural questions one might ask about such a sequence: Is it periodic? Is it bounded? Does it converge? What is the asymptotic rate of growth?

The elements of a sequence need not be numbers to be of mathematical interest. 
In a sequence of colors, for example, one can identify the frequency with which yellow appears, or the probability that red is followed by blue, or whether there exists a subsequence of $k$ black entries that are equally spaced in the original sequence. 
One seminal result on nonnumeric sequences was by \textcite{vdW-27}, who showed that, for any positive integers $k$ and $r$, every sufficiently long sequence containing at most $r$ distinct colors contains a monochromatic $k$-term arithmetic progression (i.e., a length-$k$ subsequence of a single color and equally spaced terms). 

A large body of work exists for permutations, which are sequences of elements of a linearly ordered set (generally with no element occurring twice).
The substructures for permutations are subsequences, which are usually only identified in terms of their permutation pattern $\sigma$.
For example, the permutation 1342 encounters the pattern $\sigma = 1$ (via subsequences 1, 3, 4, and 2),  $\sigma = 12$ (via 13, 14, 12, and 34),  $\sigma = 21$ (32 and 42),  $\sigma = 123$ (134),  $\sigma = 132$ (132 and 142),  $\sigma = 231$ (342), and  $\sigma = 1342$ (1342).
Perhaps the first work on permutation patterns was that of \textcite{M-15}, who showed that 132-avoiding permutations are enumerated by the Catalan numbers (see oeis.org/A000108).
For more on permutation patterns, see \textcite{K-11}.

For our present study of words, we consider only ``subwords'' that consist of consecutive letters. 
This is the perspective that holds for elements of a free monoid. 
A monoid is an algebraic structure consisting of a set, an associative binary operation on the set, and an identity element. 
A free monoid is defined over some generating set of elements, which we view as an alphabet of letters. 
Its binary operation is simply concatenation, its elements--called free words--are all finite strings of letters, and its identity element is the empty word (generally denoted with $\varepsilon$ or $\lambda$). 
Often, the operation of a monoid is called multiplication, so it is fitting that a ``subword'' of a free word is called a ``factor.'' 
For example, in the free monoid over alphabet $\{a,b,c,d,r\}$, the word $cadabra$ is a factor of $abracadabra$ because $abracadabra$ is the product of $abra$ and $cadabra$. 

If there is an inverse element $s^{-1}$ for every element $s$ in the generating set, we are dealing with a free group. 
Then any word with $ss^{-1}$ or $s^{-1}s$ as a factor is equivalent to the word obtained by removal of said factor. 
For example, $tee^{-1}hee^{-1}e$ is equivalent to reduced word $the$. 
Within what came to be known as combinatorial group theory, \textcite{D-11} first proposed the Word Problem for Groups: Given two words formed from the set of generators of a group, determine whether the words represent the same group element?


\section{Combinatorial Limit Theory}

In an era of massive technological and computational advances, we have large systems for transportation, communication, education, and commerce (to name a few examples). 
We also possess massive quantities of information in every part of life. 
Therefore, in many applications of discrete mathematics, the useful theory is that which is relevant to arbitrarily large discrete structures. 
For example, graphs can be used to model a computer network, with each vertex representing a device and each edge a data connection between devices. 
The most well-known computer network, the Internet, consists of billions of devices with constantly changing connections; one cannot simply create a database of all billion-vertex graphs and their properties. 

We use the term ``combinatorial limit theory'' in general reference to combinatorial methods which help answer the following question: What happens to discrete structures as they grow large? 
Many classical questions from combinatorics fall naturally into this field of study. 
One incredibly productive approach to handling large discrete structures is the probabilistic method, the origin of which is generally credited to Paul Erd\H{o}s. 
See \textcite{AS-08} for standard probabilistic tools used in combinatorics. 
Many asymptotic results from such methods, which may be wildly inaccurate for small values, become increasingly more accurate as the relevant structures grow. 


In the combinatorial limit theory of graphs, major recent developments include the flag algebras of \textcite{R-07} and the graph limits of Borgs, Chayes, Freedman, Lov\'asz, Schrijver, S\'os, Szegedy, Vesztergombi, etc. (see \cite{L-12}).
Given the fundamental reliance of these methods on graph homomorphisms and graph densities, we strive to apply the same ideas to words. 
We discuss graph limits in more detail when describing future research directions in Section~\ref{future:WordLimits}. 

\section{Combinatorics of Free Words} \label{words}

We are henceforth focused on free words, which we will simply call words.
For a summary of notation used throughout this text, see Appendix~\ref{NOTATION}.

\begin{defn} \label{defn:word}
	For a fixed set $\Sigma$, called an \emph{alphabet},  denote with $\Sigma^*$ the set of all finite words formed by concatenation of elements of $\Sigma$, called \emph{letters}.
	Words in $\Sigma^*$ are called \emph{$\Sigma$-words}.
	The set of length-$n$ $\Sigma$-words is denoted with $\Sigma^n$.
	The \emph{empty word}, $\varepsilon$, consisting of zero letters, is a $\Sigma$-word for any alphabet $\Sigma$.
\end{defn}

The set $\Sigma^*$, together with the associative binary operation of concatenation and the identity element $\varepsilon$, forms a free monoid.
We denote concatenation with juxtaposition.
Generally we use natural numbers or minuscule Roman letters as letters and majuscule Roman letters (especially $T,U,V,W,X,Y,$ and $Z$) to name words.
Majuscule Greek letters (especially $\Gamma$ and $\Sigma$) name alphabets, though for a standard $q$-letter alphabet, we frequently use the set $[q] = \{1, 2, \ldots, q \}$.

\begin{ex}
	Alphabet $[3]$ consists of letters 1, 2, and 3. 
	The set of $[3]$-words is \[\{1,2,3\}^* = \{\varepsilon, 1, 2, 3, 11, 12, 13, 21, 22, 23, 31, 32, 33, 111, 112, 113, 121, \ldots \}.\]
\end{ex}

\begin{defn} \label{defn:letter}
	A word $W$ is formed from the concatenation of finitely many letters.
	If letter $x$ is one of the letters concatenated to form $W$, we say $x$ \emph{occurs in} $W$, or $x \in W$.
	For natural number $n \in \N$, an $n$-fold concatenation of word $W$ is denoted $W^n$.
	The \emph{length} of word $W$, denoted $|W|$, is the number of letters in $W$, counting multiplicity.
	$\L(W)$, the \emph{alphabet generated by} $W$, is the set of all letters that occur in $W$.
	For $q \in \N$, word $W$ is \emph{$q$-ary} provided $|L(W)| \leq q$.
	We use $||W||$ to denote the number of letter recurrences in $W$, so $||W|| = |W| - |L(W)|$.
\end{defn}

\begin{ex}
	Let $W = bananas$.
	Then $a,b \in W$, but $c \not\in W$. 
	Also $|W| = 7$, $L(W) = \{a,b,n,s\}$, and $||W|| = 3$.

	For the empty word, we have $|\varepsilon| = 0$, $L(\varepsilon) = \emptyset$, and $||\varepsilon|| = 0$.
\end{ex} 

\begin{defn} \label{defn:factor}
	Word $W$ has $\binom{|W|+1}{2}$ (nonempty) \emph{substrings}, each defined by an integer pair $(i,j)$ with $0\leq i < j \leq |W|$.
	Denote with $W[i,j]$ the word in the $(i,j)$-substring, consisting of $j-i$ consecutive letters of $W$, beginning with the $(i+1)$-th.

	$V$ is a \emph{factor} of $W$, denoted $V \leq W$, provided $V = W[i,j]$ for some integers $i$ and $j$ with $0\leq i < j \leq |W|$; equivalently, $W = SVT$ for some (possibly empty) words $S$ and $T$.
\end{defn}

\begin{ex}
	$nana \leq nana \leq bananas$, with $nana = nana[0,4] = bananas[2,6]$.
\end{ex}

\section{Word Avoidability} \label{avoid}

\begin{defn}
	For alphabets $\Gamma$ and $\Sigma$, every (monoid) homomorphism $\phi: \Gamma^* \rightarrow \Sigma^*$ is uniquely defined by a function $\phi:\Gamma \rightarrow \Sigma^*$. 
	We call a homomorphism \emph{nonerasing} provided it is defined by $\phi:\Gamma \rightarrow \Sigma^* \setminus \{\varepsilon\}$; that is, no letter maps to $\varepsilon$.
\end{defn}

\begin{ex}
	Consider the homomorphism $\phi: \{b,n,s,u\}^* \rightarrow \{m,n,o,p,r,v\}^*$ defined by Table~\ref{table:exFunction}. 
	Then $\phi(sun) = moon$ and $\phi(bus) = vroom$.

	\begin{table}[ht]
	\centering
	\begin{threeparttable}
		\caption{Example nonerasing function.} \label{table:exFunction}
		
		\begin{tabular}{c | c | c | c | c} 
			$x$ & $b$ & $n$ & $s$ & $u$\\ \hline
			$\phi(x)$ & $vr$ & $n$ & $m$ & $oo$
		 \end{tabular}
	\end{threeparttable}
	\end{table}

\end{ex}

\begin{defn} \label{defn:instance}
	$U$ is an \emph{instance of $V$}, or a \emph{$V$-instance}, provided $U = \phi(V)$ for some nonerasing homomorphism $\phi$; equivalently,
	\begin{itemize}
		\item $V = x_0x_1 \cdots x_{m-1}$ where each $x_i$ is a letter;
		\item $U = A_0A_1 \cdots A_{m-1}$ with each word  $A_i \neq \varepsilon$ and $A_i = A_j$ whenever $x_i=x_j$.
	\end{itemize}
	$W$ \emph{encounters} $V$, denoted $V \preceq W$, provided $U \leq W$ for some $V$-instance $U$.
	If $W$ fails to encounter $V$, we say $W$ \emph{avoids} $V$.
\end{defn}

To help distinguish the encountered word and the encountering word, ``pattern'' is elsewhere used to refer to $V$ in the encounter relation $V \preceq W$. 
Also, an instance of a word is sometimes called a ``substitution instance'' and ``witness'' is sometimes used in place of encounter.

%

%

%
%
%
%
%

\subsection{$r$-th Power-Free Words} \label{rthPow}

The earliest results in avoidability involved avoiding words of the form $x^r$.
When specifically discussing $x^r$-avoidance, the term \emph{$r$-th power-free} is generally used (or \emph{square-free} for $r=2$ and \emph{cube-free} for $r=3$).
We see in Figure~\ref{xx} that only finitely many square-free words exist over a given two-letter alphabet.
However, \textcite{T-06} demonstrated the existence of arbitrarily long (even infinite), ternary, square-free words.

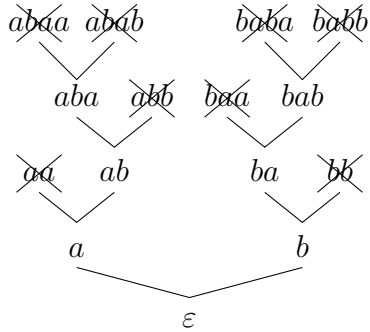
\begin{figure}[ht] 
	\centering
	\begin{threeparttable}

	\begin{tabular}{c}
	\begin{tikzpicture}
		\draw (0,0) node{$\varepsilon$};
		\draw (-1.5,.7) node[above]{$a$}--(0,.3)--(1.5,.7) node[above]{$b$};
		\draw (-2,1.7) node[above]{$aa$}--(-1.5,1.3)--(-1,1.7) node[above]{$ab$};
		\draw (1,1.7) node[above]{$ba$}--(1.5,1.3)--(2,1.7) node[above]{$bb$};
		\begin{scope}[xshift=-2cm, yshift=1.7cm]
			\draw (-.3,0)--(.3,.5);
			\draw (-.3,.5)--(.3,0);
		\end{scope}
		\begin{scope}[xshift=2cm, yshift=1.7cm]
			\draw (-.3,0)--(.3,.5);
			\draw (-.3,.5)--(.3,0);
		\end{scope}
		\draw (-1.5,2.7) node[above]{$aba$}--(-1,2.3)--(-.5,2.7) node[above]{$abb$};
		\draw (1.5,2.7) node[above]{$bab$}--(1,2.3)--(.5,2.7) node[above]{$baa$};
		\begin{scope}[xshift=-.5cm, yshift=2.7cm]
			\draw (-.3,0)--(.3,.5);
			\draw (-.3,.5)--(.3,0);
		\end{scope}
		\begin{scope}[xshift=.5cm, yshift=2.7cm]
			\draw (-.3,0)--(.3,.5);
			\draw (-.3,.5)--(.3,0);
		\end{scope}
		\draw(-2,3.7) node[above]{$abaa$}--(-1.5,3.2)--(-1,3.7) node[above]{$abab$};
		\draw(2,3.7) node[above]{$babb$}--(1.5,3.2)--(1,3.7) node[above]{$baba$};
		\begin{scope}[xshift=-2cm, yshift=3.7cm]
			\draw (-.3,0)--(.3,.5);
			\draw (-.3,.5)--(.3,0);
		\end{scope}
		\begin{scope}[xshift=-1cm, yshift=3.7cm]
			\draw (-.3,0)--(.3,.5);
			\draw (-.3,.5)--(.3,0);
		\end{scope}
		\begin{scope}[xshift=1cm, yshift=3.7cm]
			\draw (-.3,0)--(.3,.5);
			\draw (-.3,.5)--(.3,0);
		\end{scope}
		\begin{scope}[xshift=2cm, yshift=3.7cm]
			\draw (-.3,0)--(.3,.5);
			\draw (-.3,.5)--(.3,0);
		\end{scope}
	\end{tikzpicture}
	\end{tabular}

	\caption{Binary words that avoid $xx$.} \label{xx}

	\end{threeparttable}
\end{figure}

In the 1970s, a number of important results were proved regarding square-free words.
For example: \textcite{J-76} showed that there exists an infinite set of ternary square-free words $\mathcal{F}$ such that, for each $W \in \mathcal{F}$,  every word in $\mathcal{F} \setminus \{W\}$ avoids $W$; \textcite{L-76} characterized all maximal square-free words.
Within their seminal work on avoidability--the central result which we discuss later--\textcite{BEM-79} defined two interesting homomorphisms that preserved the property of being $r$-th power-free. In particular, $h: \N \rightarrow [3]$ that preserves it for $r \geq 2$ and $g: \N \rightarrow [2]$ for $r \geq 3$.

\subsection{k-Avoidability}

\begin{defn}
	A word $V$ is \emph{$k$-avoidable} provided, over a fixed alphabet of size $k$, there are infinitely many words that avoid $V$. 
	Inversely, $V$ is \emph{$k$-unavoidable} provided every sufficiently long word with at most $k$ distinct letters encounters $V$.
\end{defn}

We saw in Section~\ref{rthPow} that the word $xx$ is 3-avoidable but 2-unavoidable.
A word is \emph{doubled} provided every letter in the word occurs at least twice. 
Every doubled word is $k$-avoidable for some $k>1$ (see \cite{L-02}).

\begin{thm}[\cite{BW-12}, Theorem 2]
	``Let $p$ be a [word] of $m$ distinct [letters].
	\begin{enumerate}[1.]
		\item If $|p| \geq 3(2^{m-1})$, then $p$ is 2-avoidable. 
		\item If $|p| \geq 2^m$, then $p$ is 3-avoidable.''
	\end{enumerate}
\end{thm}

There remain a number of open problems regarding which words are $k$-avoidable for particular $k$.
See \textcite{L-02} and \textcite{C-05} for surveys on avoidability results.

\subsection{General Avoidability}

\begin{defn} \label{def:unavoidable}
	A word $V$ is \emph{unavoidable} provided, for any finite alphabet, there are only finitely many words that avoid $V$; equivalently, $V$ is $k$-unavoidable for all $k \geq 2$.
\end{defn}

The first classification of unavoidable words (Theorem~\ref{thm:BEM}) was by \textcite{BEM-79}, using the following definitions.

\begin{defn}
	``Let $W$ be a word. The letter $x$ is \emph{free for $W$} provided $x$ occurs in $W$ and for no $n \in \omega$ is it possible to find letters $e_0, \cdots, e_n$ and $f_0, \cdots, f_n$ such that all of the following are [factors] of $W$:
	\[ xe_0 \quad f_0e_0 \quad f_0e_1 \quad f_1e_1 \quad \cdots \quad f_ne_n \quad f_nx . '' \]

	``If $x$ is free for $W$, then $W^x$ is the word obtained from $W$ by deleting all occurrences of $x$.''

	``$U$ is \emph{obtained from $W$ by identification of letters} whenever'' for some letters ``$x$ and $y$ [...] occurring in $W$, $U$ is the word obtained from $W$ by substituting $x$ for $y$.''

	``$W$ \emph{reduces to $U$} provided there are words $V_0, V_1, \cdots, V_{n-1}$ with $W = V_0$, $U = V_{n-1}$ and [either] $V_{i+1} = V_i^x$ for some letter $x$ free in $V_i$ or $V_{i+1}$ is obtained from $V_i$ by identification of letters, for all $i$ with $0 \leq i < n-1$.''
\end{defn}

\begin{thm}[\cite{BEM-79}, Theorem 3.22] \label{thm:BEM}
	``The word $W$ is unavoidable if and only if $W$ reduces to a word of length one.''
\end{thm}

Three years later, Zimin published a fundamentally different classification of unavoidable words (\cite{Z-82} in Russian, \cite{Z-84} in English).

\begin{defn} \label{defn:Zimin}
	Define the \emph{$n$-th Zimin word} recursively by $Z_0 := \varepsilon$ and, for $n \in \N$, $Z_{n+1} = Z_nx_nZ_n$. Using the English alphabet rather than indexed letters:
	\[Z_1 = \textbf{a}, \quad Z_2 = a\textbf{b}a, \quad Z_3 = aba\textbf{c}aba, \quad Z_4 = abacaba\textbf{d}abacaba, \quad \ldots . \]
\end{defn}
Equivalently, $Z_n$ can be defined over the natural numbers as the word of length $2^n-1$ such that the $i$-th letter, $1 \leq i < 2^n$, is the 2-adic order of $i$.
	
\begin{thm}[\cite{Z-84}] \label{thm:Zimin}
	A word $V$ with $n$ distinct letters is unavoidable if and only if $Z_n$ encounters $V$.
\end{thm}

$Z_n$-instances are precisely \emph{sesquipowers of order $n$}. 
From \textcite{BLRS-08}, ``any nonempty word is a sesquipower of order 1; 
a word $w$ over an alphabet $A$ is a sesquipower of order $n > 1$ if $w = w_0vw_0$ for some words $w_0, v \in A^*$ with $v \neq \varepsilon$ and $w_0$ a sesquipower of order $n - 1$.''

\subsection{A Ramsey-Type Question}

With Zimin's concise characterization of unavoidable words, a natural combinatorial question follows: How long must a $q$-ary word be to guarantee that it encounters a given unavoidable word? 
By Definition~\ref{defn:f}, $\f(n,q)$ is the smallest integer $M$ such that every $q$-ary word of length $M$ encounters $Z_n$.

In 2014, three papers by different authors appeared, each independently proving bounds for $\f(n,q)$. 
\textcite{CR-14} showed that (Theorems~\ref{upper},~\ref{fnqlower})
\[ q^{2^{(n-1)}(1+o(1))} \leq \f(n,q) \leq {}^{n-1}(2q+1),\] 
where ${}^ba$ denotes an exponential tower with $b$ copies of $a$. 
These results were presented at the 45th Southeast International Conference on Combinatorics, Graph Theory, and Computing in March 2014. 

In June, \textcite{T-14} introduced a more general function $L(q,V)$ for what he calls the ``Ramsey number'' of any unavoidable word $V$.
He also attained similar lower and upper bounds for $L(q,Z_n) = \f(n,q)$.
Tao's lower bound, which we restate as Theorem~\ref{TaoLower}, is even more general, applying to any unavoidable word.

In September, \textcite{RS-14} also introduced the function $\f(n,q)$, together with the concept of ``minimal words of Zimin type $n$''; that is, instances of $Z_n$ which contain no $Z_n$-instance as a proper factor. 
We call such words \emph{minimal $Z_n$-instances}. 
Using minimal instances, and some computation, Rytter and Shur establish the best known upper bounds for $\f(3,q)$ and $\f(4,2)$. 
We restate their results in Section~\ref{MinZimin} for further use. 

A factor-avoidance variant of this function has been considered at least as early as the German work of \textcite{E-83}, some results of which were made more readily available in English by \textcite{BK-06}. 
For some fixed alphabet $\mathcal{A}$, a set of words $S$ is called unavoidable provided there are only finitely many words in $\mathcal{A}^*$ that do not contain any word in $S$ as a factor. 
Note that if the alphabet has at least 2 letters, every nonempty word by itself is avoidable. 
In Kitaev's work, $L_w(n)$ is the maximum length of a word in $\mathcal{A}^*$ that avoids some unavoidable set $S \subseteq \mathcal{A}^n$. 
\begin{thm}[\cite{E-83}, Theorem~1; \cite{BK-06}, Theorem~2.3]
	\[ L_w(n) = | \mathcal{A}|^{n-1}+n-2. \]
\end{thm}

%
%
%

\section{Word Densities} \label{dense}

Given nonempty words $V$ and $W$, the \emph{(instance) density of $V$ in $W$}, denoted $\delta(V,W)$, is the proportion of substrings of $W$ that contain instances of $V$.
For example, two of the $\binom{6+1}{2}$ substrings of $banana$ contain $xx$-instances: $anan$ and $nana$. Therefore, $\delta(xx, banana) = 2/\binom{7}{2}$. 


Recall that a word $V$ is doubled provided every letter in $V$ occurs at least twice.
For a doubled word $V$ with $k \geq 2$ distinct letters and an alphabet $\Sigma$ with $|\Sigma| = q \geq 4$, $(k,q) \neq (2,4)$, \textcite{BG-07} showed that there are at least $\lambda(k,q)^n$ words in $\Sigma^n$ that avoid $V$, where they defined the function $\gamma$ to be
\[\lambda(k,q) = m\left(1 + \frac{1}{(m-2)^k}\right)^{-1}.\]
This exponential lower bound on the number of words avoiding a doubled word hints at the moral of Chapter~\ref{DICHOT}: instances of doubled words are rare.
For doubled word $V$ and an alphabet $\Sigma$ with $q \geq 2$ letters, the probability that a random word $W_n \in \Sigma^n$ encounters $V$ is asymptotically 1. 
Indeed, the event that $W_n[b|V|,(b+1)|V|]$ is an instance of $V$ has nonzero probability and is independent for distinct $b \in \N$.
Nevertheless, the expected density $\delta_n(V,q) = \EE(\delta(V,W_n))$ (Definition~\ref{defn:density2}) is asymptotically negligible.
Specifically, the central result of Chapter~\ref{DICHOT} is the following dichotomy.
\begin{thm*}[\ref{dichotomy}]
	Let $V$ be a word on any alphabet.
	Fix integer $q \geq 2$.
	$V$ is doubled if and only if $\delta(V,q) = \lim_{n\rightarrow \infty} \delta_n(V,q) = 0$.
\end{thm*}

For doubled $V$, not only does $\delta(V,q) = 0$, but we establish tight concentration of $\delta(V,W_n)$ for random word $W_n \in [q]^n$.
\begin{thm*}[\ref{expectation}, \ref{variance}]
	Let $V$ be a doubled word, $q \geq 2$, and $W_n \in [q]^n$ chosen uniformly at random. 
	\[\frac{1}{n} \ll \EE(\delta(V,W_n)) \ll \frac{\log n}{n};\]
	\[ \Var(\delta(V,W_n)) \ll \frac{(\log n)^3}{n^3} \ll \EE(\delta(V,W_n))^2 \frac{(\log n)^3}{n}. \]
\end{thm*}

For nondoubled $V$, we know from the dichotomy that, if $\delta_n(V,q)$ converges, its limit is not 0. 
To get a handle on the nondoubled case, we consider instances of specified length, a perspective used in the proof of Theorem~\ref{fnqlower}.  
From Definition~\ref{defn:I}:
	Let $\Inst_n(W,\Sigma)$ be the set of $W$-instances in $\Sigma^n$, and $\II_n(W,q)$ the probability that a random length-$n$ $q$-ary word is a $W$-instance; that is, 
	\[ \II_n(W,|\Sigma|) =\frac{|\Inst_n(W,\Sigma)|}{|\Sigma|^n}. \]

\begin{ex}
	$\Inst_4(wow,[2]) = \{1111,1121, 1211,1221,2112, 2122,2212,2222\}$ and $\II_4(wow,2) = \frac{8}{2^4} = \frac{1}{2}$.
\end{ex}

\begin{thm*}[\ref{nondoubledProb}, \ref{cor:EdI}]
	Fix word $V$ and positive integer $q$. The limits $\delta(V,q)$ and $\II(V,q) = \lim_{n\rightarrow \infty} \II_n(V,q)$ both exist, and $\delta(V,q) = \II(V,q)$.
\end{thm*}
We also establish bounds for $\II(V,q)$ under various conditions.

\section{Looking Forward}

There are still many unexplored avenues within the combinatorial limit theory of free words.
The final part of this work, Chapter~\ref{FUTURE}, summarizes a few directions for further development.
There we also pose a number of open questions that arise from the present research.

\chapter{Bounds on Zimin Word Avoidance} \label{AVOID}

Recall that $V$ is unavoidable provided, for any finite alphsabet, there are only finitely many words that avoid (i.e., do not encounter) $V$. 
Moreover, we stated Zimin's classification (Theorem~\ref{thm:Zimin}) that the unavoidable words are precisely the words encountered by what are now known as Zimin words (Definition~\ref{defn:Zimin}):
	\[Z_1 = a, \quad Z_2 = aba, \quad Z_3 = abacaba, \quad Z_4 = abacabadabacaba, \quad \ldots\]
\textcite{CR-14}, \textcite{T-14}, and \textcite{RS-14}, independently began investigating bounds on the length of words that avoid unavoidable words. 

\section{Avoiding the Unavoidable}


From Zimin's explicit classification of unavoidable words, a natural question arises in the Ramsey-theoretic paradigm: for a fixed unavoidable word $V$, how long can a word be that avoids $V$?
Our approach to this question is to start with avoiding the Zimin words, which gives upper bounds for all unavoidable words.
\begin{defn} \label{defn:f}
	$\f(n,q)$ is the least integer $M$ such that every $q$-ary word of length $M$ encounters $Z_n$. 
\end{defn}

Let ${}^ba$ denote the towering exponential $a^{a^{\cdot^{\cdot^a}}}$ with $b$ occurrences of $a$. 
This tetration is elsewhere denoted with Knuth's up-arrow notation by $a \uparrow \uparrow b$. 
${}^0a$ is defined to be 1. 

\begin{thm}[\cite{CR-14}, Theorem 1.1] \label{upper}
	For $n,q \in \Z^+$, 
	\[ f(n,q) \leq {}^{n-1}(2q+1) . \]
\end{thm}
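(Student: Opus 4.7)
The plan is induction on $n$, with base case $n=1$ trivial: $f(1,q)\le 1={}^{0}(2q+1)$ since any single letter realizes $Z_1$ under $x_1\mapsto$ that letter. For the inductive step, set $L:={}^{n-1}(2q+1)$, assume $f(n,q)\le L$, and let $W$ be a $q$-ary word of length $M=(2q+1)^L$. Consider the $M-L+1$ length-$L$ factors $F_i:=W[i..i+L-1]$. Each $F_i$ is a $q$-ary word of length $L\ge f(n,q)$, so by the inductive hypothesis it encounters $Z_n$. Crucially, whether a length-$L$ string encounters $Z_n$ depends only on the string itself, so for each value $s\in\Sigma^L$ that appears as some $F_i$ I fix once and for all a nonerasing morphism $\phi_s$ and an offset $a_s\in\{1,\dots,L\}$ such that $u_s:=\phi_s(Z_n)$ appears in $s$ starting at position $a_s$. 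Whenever $F_i=s$, then, $u_s$ appears in $W$ starting at position $i+a_s-1$.

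The target is to locate indices $i<j$ with $F_i=F_j=s$ and $j-i\ge L+1$. Given such a pair, the two occurrences of $u_s$ in $W$ are separated by $j-i-|u_s|\ge L+1-L=1$ letters; letting $v$ denote the intervening nonempty factor of $W$, the string $u_s\, v\, u_s$ is a factor of $W$, and extending $\phi_s$ by $x_{n+1}\mapsto v$ witnesses an encounter of $Z_{n+1}$. To produce such a pair by contradiction, suppose instead that for every $s$, all indices with $F_i=s$ lie within a set of $L+1$ consecutive integers. Then each $s$ is realized at most $L+1$ times, and summing over $s\in\Sigma^L$ yields $M-L+1\le(L+1)q^L$.

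It remains to verify that $M=(2q+1)^L$ defeats this bound, i.e.\ that $(2q+1)^L>(L+1)q^L+L-1$ for every $q,L\ge 1$. This elementary inequality is the one calculational step and the only potential obstacle, though a mild one: writing $(2q+1)^L=q^L(2+1/q)^L\ge 2^L q^L$ reduces it to $2^L q^L\ge(L+1)q^L+L-1$, whose tightest case $L=q=1$ reads $2\ge 2$ and is made strict by the slack $(2+1/q)^L>2^L$. All the genuine content of the argument is carried by the pigeonhole on length-$L$ factors above.
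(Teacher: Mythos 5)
Your proof is correct and follows essentially the same route as the paper: induction on $n$, with a pigeonhole argument forcing two occurrences of a repeated length-$f(n,q)$ factor, a fixed $Z_n$-instance inside that factor, and a nonempty gap to serve as the image of the new letter. The only difference is bookkeeping—you slide a window over all length-$L$ factors and demand two occurrences at distance at least $L+1$, whereas the paper cuts the word into $q^T+1$ disjoint length-$T$ blocks separated by single letters—and both yield the bound ${}^{n}(2q+1)$ at the next level.
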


\begin{proof}
	We proceed via induction on $n$. For the base case, set $n=1$. Every nonempty word is an instance of $Z_1$, so $\f(1,q) = 1$.

	For the inductive hypothesis, assume the claim is true for some positive $n$ and set $T=\f(n,q)$. That is, every $q$-ary word of length $T$ encounters $Z_n$.  Concatenate any $q^T+1$ strings $W_0, W_1, \ldots, W_{q^T}$ of length $T$ with an arbitrary letter $a_i$ between $W_{i-1}$ and $W_{i}$ for each positive $i \leq q^T$:
	\[U = W_0 \; a_1\; W_1 \; a_2 \; W_2 \; a_3\; \cdots \; W_{q^T-1} \; a_{q^T} \; W_{q^T}.\]
	
	By the pigeonhole principle, $W_i = W_j$ for some $i < j$.
	That string, being length $T$, encounters $Z_n$.  
	Therefore, we have some word $W \leq W_i$ that is an instance of $Z_n$ and shows up twice, disjointly, in $U$.
	The extra letter $a_{i+1}$ guarantee that the two occurrences of $W$ are not consecutive.
	This proves that an arbitrary word of length $(T+1)(q^T + 1)-1$ witnesses $Z_{n+1}$, so
	\[\f(n+1,q) \leq (T+1)(q^T + 1)-1  \leq (2q+1)^T = Q^T.\]
\end{proof}

There is clearly a function $Q(n,q)$ such that $\f(n+1,q) \leq {Q(n,q)}^{\f(n,q)}$ and $Q(n,q)$ tends to $q$ as $n \rightarrow \infty$. 
No effort has been made to optimize the choice of function, as such does not decrease the tetration in the bound.

The technique used to prove Theorem~\ref{upper} is first found in Lothaire's proof of unavoidability of $Z_n$ (\cite{L-02}, 3.1.3). 
\textcite{T-14} uses the same technique with different approximation to establish a similar upper bound.
\begin{thm}[\cite{T-14}, Theorem 6] \label{TaoUpper} 
	For integer $n \geq 2$ and $q \geq 2$,
	\[ \f(n,q) < {}^{(2n-1)}q. \]
\end{thm}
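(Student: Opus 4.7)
The plan is to rerun the recursive pigeonhole construction behind Theorem~\ref{upper}, extract the same recurrence
\[\f(n+1,q) \leq (2q+1)^{\f(n,q)},\]
and repackage the base $2q+1$ as a power of $q$, so that each induction step raises the tower height by two rather than one. The target inductive statement, to be proved by induction on $n \geq 2$, is $\f(n,q) < {}^{2n-1}q$.

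For the base case $n = 2$, Theorem~\ref{upper} already gives $\f(2,q) \leq {}^{1}(2q+1) = 2q+1$, and one verifies $2q+1 < q^{q^q} = {}^{3}q$ directly: at $q = 2$ this reads $5 < 16$, and for $q \geq 3$ one has $2q+1 \leq q^3 \leq q^{q^q}$ since $3 \leq q^q$. For the inductive step, assume $\f(n,q) < {}^{2n-1}q$ and write $A = \f(n,q)$. The key arithmetic step is the elementary inequality $(2q+1)^{A} \leq q^{q^{A}}$, which reduces to $2q+1 \leq q^3$ (immediate for $q \geq 2$) together with $3A \leq q^{A}$. The second inequality follows from monotonicity of $\f$ in both arguments: since $A \geq \f(2,2) = 5$ (witnessed, for instance, by the word $aabb$), one has $q^{A} \geq 2^{5} = 32 \geq 15 = 3A$. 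Combining with the inductive hypothesis,
\[\f(n+1,q) \leq (2q+1)^{A} \leq q^{q^{A}} < q^{q^{{}^{2n-1}q}} = {}^{2n+1}q,\]
closing the induction.

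The main obstacle is purely arithmetic bookkeeping: verifying $3A \leq q^{A}$ at the smallest admissible value $A = 5$ when $q = 2$, propagating strict inequality through each exponentiation, and dispatching the pinched boundary $q = 2$ in the base case. Conceptually, Theorem~\ref{TaoUpper} is just a different tetrational accounting for the same pigeonhole argument that proves Theorem~\ref{upper}; the slack in the recurrence is more than enough to absorb the constant base $2q+1$ into two extra levels of a tower in base $q$.
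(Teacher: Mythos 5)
Your derivation is correct, and it takes a different route from the paper, which in fact gives no proof of Theorem~\ref{TaoUpper} at all: it simply cites \textcite{T-14}, remarking that Tao reruns the same Lothaire-style pigeonhole construction with a different approximation. You instead obtain the bound as a corollary of the recurrence $\f(n+1,q) \leq (2q+1)^{\f(n,q)}$ already extracted in the proof of Theorem~\ref{upper}, absorbing the base $2q+1$ into two extra tower levels via $(2q+1)^{A} \leq q^{3A} \leq q^{q^{A}}$; this buys a short, self-contained proof that reuses the existing combinatorial work rather than redoing the construction with sharper bookkeeping, at the (negligible) cost of a slightly cruder constant at each level. Two small points to tighten. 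First, your justification of $3A \leq q^{A}$ is written as ``$q^{A} \geq 2^{5} = 32 \geq 15 = 3A$,'' which is literally valid only at $A = 5$; what you want is the elementary fact $3A \leq 2^{A} \leq q^{A}$ for all $A \geq 4$ (or $A \geq 5$), which then covers every admissible $A$. Second, you invoke monotonicity of $\f$ in both arguments without proof; the paper never states it, so add the one-line argument (every $Z_{n+1}$-instance contains a $Z_n$-instance, so a word avoiding $Z_n$ avoids $Z_{n+1}$; and any $q$-ary word is also a word over a larger alphabet), or bypass monotonicity entirely by noting $\f(n,q) \geq |Z_n| = 2^n - 1 \geq 7$ for $n \geq 3$ and $\f(2,q) \geq 5$, witnessed by a word such as $a_1a_1a_2a_2\cdots a_qa_q$. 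With those repairs the induction closes exactly as you describe, and strictness propagates correctly since $x \mapsto q^x$ is strictly increasing.
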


The technique used in the original proof by \cite{Z-84} implicitly gives, for $n\geq 2$, 
\[\f(n+1,q+1) \leq (\f(n+1,q)+2|Z_{n+1}|)\f(n,|Z_{n+1}|^2q^{\f(n+1,q)}).\]
This is an Ackermann-type function for an upper bound, which is much larger than the primitive recursive bound from Theorems~\ref{upper} and ~\ref{TaoUpper}.

Table~\ref{fn2} shows known values of $\f(n,2)$. 
Supporting word-lists and Sage code are found in Appendix~\ref{AppendZ}.

\begin{table}[ht]
\centering
\begin{threeparttable}
	\caption{Values of $\f(n,2)$.} \label{fn2}
	\begin{tabular}{c|c|c}
		$n$ & $Z_n$ & $\f(n,2)$\\ \hline
		0 & $\varepsilon$ & 0\\
		1 & a & 1\\
		2 & aba & 5\\
		3 & abacaba & 29\\
		4 & abacabadabacaba & $\geq 10483$
	\end{tabular}
\end{threeparttable}
\end{table}

\section{Finding a Lower Bound with the First Moment Method}

Throughout this section, $\Sigma$ is a fixed alphabet with $|\Sigma| = q \geq 2$ letters.

\begin{defn} \label{defn:I}
	Let $\Inst_n(W,\Sigma)$ be the set of $W$-instances in $\Sigma^n$, and $\II_n(W,q)$ the probability that a random length-$n$ $q$-ary word is a $W$-instance; that is, 
	\[ \II_n(W,|\Sigma|) =\frac{|\Inst_n(W,\Sigma)|}{|\Sigma|^n}. \]
\end{defn}

%

\begin{lem} [\cite{CR-14}, Lemma 2.1]
	For all $n,M \in \Z^+$,
	\[|\Inst_{(M+1)}(Z_n,\Sigma)| \geq q\cdot|\Inst_M(Z_n,\Sigma)|.\]
\end{lem}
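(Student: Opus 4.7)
The plan is to construct an injection $\iota \colon \Inst_M(Z_n, \Sigma) \times \Sigma \hookrightarrow \Inst_{M+1}(Z_n, \Sigma)$, from which the stated inequality follows immediately by cardinality. The case $n = 1$ is trivial, since every nonempty word is a $Z_1$-instance; so assume $n \geq 2$ and use the recursion $Z_n = Z_{n-1}\,\zeta\,Z_{n-1}$ in which $\zeta$ is a letter not appearing in $Z_{n-1}$. Every $W \in \Inst_M(Z_n, \Sigma)$ then admits at least one decomposition $W = X\,Y\,X$ with $X$ a $Z_{n-1}$-instance and $Y$ nonempty. For each such $W$, I would fix the canonical decomposition $W = X_W\,Y_W\,X_W$ by taking $X_W$ to be the \emph{shortest} $Z_{n-1}$-instance that is both a prefix and a suffix of $W$ with middle $Y_W$ nonempty.

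Given a pair $(W, c)$, define $\iota(W, c) = X_W\,Y_W\,c\,X_W$. This word has length $M + 1$ and is itself a $Z_n$-instance: if $\phi$ is a substitution realizing $\phi(Z_{n-1}) = X_W$ and $\phi(\zeta) = Y_W$, then modifying $\phi$ by redefining $\phi(\zeta) := Y_W\,c$ produces a substitution whose image of $Z_n$ is precisely $\iota(W, c)$.

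The crux is injectivity of $\iota$. Let $U = \iota(W, c)$; I would show that the canonical decomposition of $U$ must satisfy $X_U = X_W$ and $Y_U = Y_W\,c$. Comparing $W$ and $U$ position-by-position, they coincide on their first $|X_W| + |Y_W|$ letters and both have $X_W$ as their length-$|X_W|$ suffix; so any $Z_{n-1}$-instance $X'$ that is both a prefix and a suffix of $U$ with $|X'| < |X_W|$ would automatically be a prefix and a suffix of $W$ of the same length while leaving the middle nonempty, contradicting the minimality of $X_W$ for $W$. From $U$ we therefore recover $W$ and $c$ by setting $X_W = X_U$, taking $c$ to be the final letter of $Y_U$, and letting $Y_W$ be $Y_U$ with that letter removed. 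The main obstacle is this positional bookkeeping that pins down $X_U = X_W$; once that is secured, the bound follows from $|\Inst_M(Z_n, \Sigma) \times \Sigma| = q\cdot|\Inst_M(Z_n, \Sigma)|$.
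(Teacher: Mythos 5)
Your proposal is correct and is essentially the paper's own argument: both fix the decomposition $W = X\,Y\,X$ with the shortest bordering $Z_{n-1}$-instance, insert an arbitrary letter before the final copy, and use that minimality to show the map $(W,c)\mapsto X\,Y\,c\,X$ is injective. The only difference is presentational (you phrase it as recovering the canonical decomposition of the image, the paper as showing two such products must coincide), so no further comment is needed.
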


\begin{proof}
	Take arbitrary $W \in \Inst_M(Z_n,\Sigma)$. 
	By the recursive construction of $Z_n$, we can write $W = W_1W_0W_1$ with $W_1 \in \Inst_N\left(Z_{(n-1)},\Sigma\right)$, where $2N < M$. 
	Choose the decomposition of $W$ to minimize $|W_1|$. 
	Then $W_1W_0x_iW_1 \in \Inst_{(M+1)}(Z_n,\Sigma)$ for each $i < q$. 
	
	The lemma follows, unless a $Z_n$-instance of length $M+1$ can be generated in two ways -- that is, if $W_1W_0aW_1 = V_1V_0bV_1$ for some $V_1V_0V_1 = V$, where $|V_1|$ is also minimized. 
	If $|V_1|<|W_1|$, then $V_1$ is a prefix and suffix of $W_1$, so $|W_1|$ was not minimized.
	But if $|V_1|>|W_1|$, then $W_1$ is a prefix and suffix of $V_1$, so $|V_1|$ was not minimized.
	Therefore, $|V_1|=|W_1|$, so $V_1=W_1$, which implies $a=b$ and $V=W$.
\end{proof}

\begin{cor}[\cite{CR-14}, Corollary 2.2] \label{ZnMonot}
	For all $n,M \in \Z^+$,
	\[\II_{(M+1)}(Z_n,q) \geq \II_M(Z_n,q).\]
\end{cor}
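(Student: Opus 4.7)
The plan is to observe that this corollary is essentially an immediate rephrasing of the preceding lemma. First I would unfold the definition of $\II$: by Definition~\ref{defn:I},
\[ \II_{M+1}(Z_n,q) = \frac{|\Inst_{M+1}(Z_n,\Sigma)|}{q^{M+1}} \quad \text{and} \quad \II_M(Z_n,q) = \frac{|\Inst_M(Z_n,\Sigma)|}{q^M}. \]
So the claimed inequality is equivalent to $|\Inst_{M+1}(Z_n,\Sigma)| \geq q\cdot|\Inst_M(Z_n,\Sigma)|$ after multiplying through by $q^{M+1}$.

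Next I would invoke the preceding lemma, which asserts exactly this inequality on the raw instance-counts, and conclude. There is essentially nothing to do beyond dividing both sides of the lemma's inequality by $q^{M+1}$, so I do not anticipate any obstacle; the corollary is a normalization of the absolute bound into a probabilistic statement, expressing monotonicity of the $Z_n$-instance probability in the word length.
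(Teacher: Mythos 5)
Your proposal is correct and matches the intended argument: the paper states this as an immediate corollary of the preceding lemma, obtained precisely by dividing the counting inequality $|\Inst_{M+1}(Z_n,\Sigma)| \geq q\cdot|\Inst_M(Z_n,\Sigma)|$ by $q^{M+1}$ and unfolding Definition~\ref{defn:I}. Nothing further is needed.
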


\begin{lem} [\cite{CR-14}, Lemma 2.3]
	For all $n,M \in \Z^+$,
	\[|\Inst_M(Z_n,\Sigma)| \leq \left(\frac{q}{q-1}\right)^{n-1}q^{(M-2^n+n+1)}.\]
\end{lem}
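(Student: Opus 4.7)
The plan is to induct on $n$, using the self-similar structure $Z_n = Z_{n-1}\, x_n\, Z_{n-1}$ for $n \geq 2$. In the base case $n = 1$, every nonempty length-$M$ word is trivially a $Z_1$-instance, so $|\Inst_M(Z_1,\Sigma)| = q^M$, which matches the bound $(q/(q-1))^0 \cdot q^{M - 2 + 1 + 1} = q^M$ exactly.

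For the inductive step, every $W \in \Inst_M(Z_n,\Sigma)$ admits at least one decomposition $W = U V U$ with $U \in \Inst_k(Z_{n-1},\Sigma)$ for some $k \geq 2^{n-1}-1$ and $V \in \Sigma^{M-2k}$ nonempty. Counting such pairs $(U,V)$ yields
\[ |\Inst_M(Z_n,\Sigma)| \;\leq\; \sum_{k} |\Inst_k(Z_{n-1},\Sigma)| \cdot q^{M-2k}, \]
where the sum runs over all valid $k$. Distinct $(U,V)$ pairs may produce the same $W$, but that only inflates the right-hand side, which is fine for an upper bound.

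Next, I would apply the inductive hypothesis to each summand, bounding $|\Inst_k(Z_{n-1},\Sigma)|$ by $(q/(q-1))^{n-2}\, q^{k - 2^{n-1} + n}$. The $k$-dependent exponents combine as $q^{k - 2^{n-1} + n} \cdot q^{M-2k} = q^{M - 2^{n-1} + n} \cdot q^{-k}$, pulling the entire $k$-dependence into a factor $q^{-k}$. Extending the finite sum to the infinite tail $k \geq 2^{n-1} - 1$ (another harmless overcount) gives the closed form
\[ \sum_{k \geq 2^{n-1} - 1} q^{-k} \;=\; \frac{q}{q-1}\, q^{-(2^{n-1}-1)}. \]
Multiplying everything together yields $(q/(q-1))^{n-1}\, q^{M - 2^n + n + 1}$, closing the induction.

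There is no real obstacle here; the main technical observation is that the two places where we weaken an equality into an inequality—first by counting decompositions instead of distinct instances, and then by extending the sum over $k$ into a full geometric series—are exactly what converts an awkward recursion into the stated clean closed form. I would be careful to verify the edge case $M < 2^n - 1$ (where $|\Inst_M(Z_n,\Sigma)| = 0$ and the bound is vacuous) so that the induction is uniformly valid on $\Z^+$.
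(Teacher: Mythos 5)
Your proposal is correct and follows essentially the same route as the paper: induction on $n$, decomposing each instance as $UVU$ with $U$ a shorter Zimin instance, overcounting decompositions, applying the inductive hypothesis termwise, and extending the resulting sum to an infinite geometric series to obtain the factor $\frac{q}{q-1}\,q^{-(2^{n-1}-1)}$. The algebra closing the induction matches the paper's computation, and your remark about the vacuous case $M < 2^n-1$ is a harmless extra check.
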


\begin{proof}
	The proof proceeds by induction on $n$. For the base case, set $n=1$. Every nonempty word is an instance of $Z_1$, so $|\Inst_M(Z_1,\Sigma)| = q^M$.
	
	For the inductive hypothesis, assume the inequality is true for some $n \in \Z^+$. The first inequality below comes from the following overcount of $Z_{n+1}$-instances of length $M$. Every such word can be written as $UVU$ where $U$ is a $Z_n$-instance of length $j<\frac{M}{2}$. Since an instance of $Z_n$ can be no shorter than $Z_n$, $2^n-1 \leq j <\frac{M}{2}$. For each possible $j$, there are $|\Inst_j(Z_n,\Sigma)|$ ways to choose $U$ and $q^{M-2j}$ ways to choose $V$. This is an overcount, since a Zimin-instance may have multiple decompositions.
	\begin{eqnarray*}
		\left|\Inst_M\left(Z_{(n+1)},\Sigma\right)\right| & \leq & \sum_{j = 2^n-1}^{\floor{(M-1)/2}} |\Inst_j(Z_n,\Sigma)|q^{M-2j} \\
		& \leq & \sum_{j = 2^n-1}^{\floor{(M-1)/2}}\left(\frac{q}{q-1}\right)^{n-1}q^{(j-2^n+n+1)}q^{M-2j}\\
		& = & \left(\frac{q}{q-1}\right)^{n-1}q^{(M-2^n+n+1)} \sum_{j = 2^n-1}^{\floor{(M-1)/2}}q^{-j}\\
		& < & \left(\frac{q}{q-1}\right)^{n-1}q^{(M-2^n+n+1)} \sum_{j = 2^n-1}^{\infty}q^{-j}\\
		& = & \left(\frac{q}{q-1}\right)^{n-1}q^{(M-2^n+n+1)}\left(\frac{q^{-(2^n-1)+1}}{q-1}\right)\\
		& = & \left(\frac{q}{q-1}\right)^{(n-1)+1}q^{(M -2^{n+1}+ (n+1)+1)}.
	\end{eqnarray*}
\end{proof}

\begin{cor} [\cite{CR-14}, Corollary 2.4]
	For all $n,M \in \Z^+$,
	\[ \II_M(Z_n,q) \leq \left(\frac{q}{q-1}\right)^{n-1}q^{(-2^n+n+1)}.\]
\end{cor}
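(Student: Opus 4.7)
The plan is to derive this corollary immediately from the preceding Lemma by dividing through by $q^M$. By Definition~\ref{defn:I}, we have
\[ \II_M(Z_n,q) = \frac{|\Inst_M(Z_n,\Sigma)|}{q^M}, \]
so the lemma bound
\[ |\Inst_M(Z_n,\Sigma)| \leq \left(\frac{q}{q-1}\right)^{n-1} q^{(M - 2^n + n + 1)} \]
yields the claim simply by dividing both sides by $q^M$, which eliminates the $M$-dependence on the right-hand side. No additional case analysis or induction is required at this stage, since the hard work of controlling the multiplicative constant $\left(\frac{q}{q-1}\right)^{n-1}$ and the exponent shift $-2^n + n + 1$ was absorbed into the induction of the previous lemma.

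There is really no main obstacle here; the entire content of the corollary is packaged in the prior lemma. The only thing worth flagging is that the bound is $M$-uniform, which is the key feature one wants for a first moment argument: for fixed $n$ and $q$, the probability that a random $q$-ary word of any length encounters $Z_n$ as its entire content (i.e., is a $Z_n$-instance) is bounded by a constant depending only on $n$ and $q$. This constant decays doubly exponentially in $n$ through the $q^{-2^n}$ factor, which will be the ingredient driving the lower bound argument on $\f(n,q)$ via a union bound over starting positions of substrings.

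Concretely, I would simply write: ``Divide the inequality of the previous lemma by $q^M$ and apply Definition~\ref{defn:I}.'' That is the whole proof.
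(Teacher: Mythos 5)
Your proof is correct and matches the paper's (implicit) reasoning exactly: the corollary is obtained by dividing the bound of the preceding lemma by $q^M$ and invoking Definition~\ref{defn:I}. Nothing more is needed.
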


\begin{thm}[\cite{CR-14}, Theorem 2.5] \label{fnqlower}
	As $q \rightarrow \infty$ or $n \rightarrow \infty$,
	\[\f(n,q) \geq \sqrt{\frac{2q^{2^n}}{q^{(n+1)}e^{\left(\frac{n-1}{q-1}\right)}}} - 1 = q^{2^{(n-1)}(1+o(1))} . \] 
\end{thm}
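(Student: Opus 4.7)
The plan is to apply the first moment method to a uniformly random word $W \in \Sigma^M$, using Corollary~2.4 to bound the probability that any fixed factor of $W$ is a $Z_n$-instance. Since the factor of $W$ starting at a fixed position with a fixed length $\ell$ is itself uniformly distributed on $\Sigma^\ell$, its probability of being a $Z_n$-instance equals $\II_\ell(Z_n, q)$, and Corollary~2.4 bounds this by the constant $C_n := \left(\frac{q}{q-1}\right)^{n-1} q^{-2^n + n + 1}$, which is independent of $\ell$.

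Summing over all $\binom{M+1}{2} = \frac{M(M+1)}{2}$ non-empty factors, linearity of expectation shows that the expected number of $Z_n$-instance factors in $W$ is at most $\frac{M(M+1)}{2} C_n$. Whenever this quantity is strictly less than $1$, some specific word of length $M$ has no $Z_n$-instance factor, i.e.\ avoids $Z_n$, which gives $\f(n,q) > M$.

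Next I would simplify $1/C_n = \left(1 - 1/q\right)^{n-1} q^{2^n - n - 1}$ using the elementary inequality $\ln(1 - 1/q) \geq -1/(q-1)$, valid for $q \geq 2$. This yields $(1 - 1/q)^{n-1} \geq e^{-(n-1)/(q-1)}$, so
\[
\frac{2}{C_n} \;\geq\; \frac{2 q^{2^n}}{q^{n+1}\, e^{(n-1)/(q-1)}} \;=:\; B.
\]
Choosing the largest integer $M$ with $(M+1)^2 \leq B$ gives $M(M+1) < 2/C_n$ and $M + 1 \geq \sqrt{B} - 1$, so $\f(n,q) \geq \sqrt{B} - 1$, the stated inequality. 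The asymptotic $\sqrt{B} = q^{2^{n-1}(1+o(1))}$ then follows by taking $\log_q$: the leading term is $2^{n-1}$, while the remaining additive corrections amount to $O(n)$, which is negligible compared to $2^{n-1}$ in the stated regime.

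I anticipate no serious obstacle: the two key inputs -- Corollary~2.4's $\ell$-independent bound and the exponential lower bound for $(1 - 1/q)^{n-1}$ -- are both routine. The mildly delicate point is recognizing that Corollary~2.4's uniform-in-$M$ bound transfers directly into a uniform-in-$\ell$ bound on factor probabilities, which is precisely what makes the crude sum over $\binom{M+1}{2}$ factors collapse into a clean $\Theta(M^2)$ expression rather than something depending on the distribution of factor lengths.
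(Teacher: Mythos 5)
Your proposal is correct and follows essentially the same first-moment argument as the paper: a uniformly random word of length $M$, linearity of expectation over the $\binom{M+1}{2}$ factors, the bound from Corollary~2.4, the estimate $\left(\frac{q}{q-1}\right)^{n-1} \leq e^{(n-1)/(q-1)}$, and solving $\EE(X)<1$ for $M$. The only cosmetic difference is that the paper first invokes the monotonicity statement (Corollary~2.2) to bound each term by $\II_M(Z_n,q)$ before applying Corollary~2.4, whereas you use the length-uniform bound of Corollary~2.4 directly on each factor length; the computation and conclusion are otherwise identical.
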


\begin{proof}
	Let word $W$ consist of $M$ uniform, independent random selections from $\Sigma$. 
	Define the random variable $X$ to count the number of subwords of $W$ that are instances of $Z_n$ (including repetition if a single subword occurs multiple times in $W$):
	\[X = \big|\big\{(i,j) \mid 0\leq i < j \leq M, W[i,j] \in \Inst_{(j-i)}(Z_n,\Sigma)\big\}\big|.\]
	By monotonicity with respect to word length (Corollary~\ref{ZnMonot}):
	\begin{eqnarray*}
		\EE(X) & = & \sum_{0 \leq i < j \leq M} \II_{(j-i)}(Z_n,q)\\	
		& \leq & \big|\big\{(i,j) \mid 0 \leq i < j \leq M\big\}\big| \cdot \II_M(Z_n,q)\\	
		& \leq & \binom{M+1}{2} \left(\frac{q}{q-1}\right)^{n-1}q^{(-2^n+n+1)}\\	
		& < & \frac{1}{2}(M+1)^2 e^{\left(\frac{n-1}{q-1}\right)}q^{(-2^n+n+1)}.
	\end{eqnarray*}
	
	There exists a word of length $M$ that avoids $Z_n$ when $E(X) < 1$.
	It suffices to show that:
	\begin{equation} \label{Msuffice}
		(M+1)^2 \left(\frac{1}{2}e^{\left(\frac{n-1}{q-1}\right)}q^{(-2^n+n+1)}\right) \leq 1.
	\end{equation}
	
	Solving \eqref{Msuffice} for $M$:
	\begin{eqnarray*}
		M &\leq&  \left(\frac{1}{2}e^{\left(\frac{n-1}{q-1}\right)}q^{(-2^n+n+1)}\right)^{-1/2} - 1\\
		&=& q^{2^{(n-1)}}\left(\frac{1}{2}e^{\left(\frac{n-1}{q-1}\right)}q^{(n+1)}\right)^{-1/2} - 1\\
		&=& q^{2^{(n-1)}(1+o(1))}.
	\end{eqnarray*}
\end{proof}

\textcite{T-14} uses the probabilistic method and generating functions and to prove a more general result.

\begin{thm}[\cite{T-14}, Corollary 1] \label{TaoLower}
	Suppose word $V$ has $r$ distinct letters with multiplicities $1 = k_1 = \cdots = k_s < k_{s+1} \leq \cdots \leq k_r$. If 
	\[ n < (1 + o(1)) \left[ (s+1)! \prod_{j = s+1}^r (q^{k_j - 1} - 1) \right]^{\frac{1}{s+1}}, \]
there is a length-$n$ $q$-ary word that avoids $V$.
\end{thm}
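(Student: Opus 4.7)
The plan is to apply the first moment method in the spirit of the proof of Theorem~\ref{fnqlower}, but now counting occurrences of arbitrary $V$-instances rather than $Z_n$-instances. Let $W$ be a uniform random word in $\Sigma^n$ and let
\[ X = \big|\big\{(i,j) \mid 0 \leq i < j \leq n,\ W[i,j] \text{ is a } V\text{-instance}\big\}\big|. \]
If $\EE(X) < 1$, then some word in $\Sigma^n$ avoids $V$, so it suffices to verify $\EE(X) < 1$ whenever $n$ lies in the stated range.

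Next, I would parameterize $V$-instances by their \emph{shape}: a tuple $(\ell_1, \ldots, \ell_r)$ of positive integers where $\ell_j$ is the length of the word substituted for the $j$-th distinct letter of $V$. Such an instance has length $m = \sum_j k_j \ell_j$, and for each fixed shape there are at most $q^{\sum_j \ell_j}$ instances of that shape in $\Sigma^m$ (an overcount, since different shapes may yield the same word). Hence the probability that a fixed length-$m$ factor of $W$ is a $V$-instance of shape $(\ell_j)$ is at most $q^{\sum_j \ell_j - m} = \prod_j q^{-(k_j - 1)\ell_j}$; the $s$ indices with $k_j = 1$ contribute $q^0 = 1$ and drop out of the product. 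Summing over the $n - m + 1$ starting positions of length-$m$ factors and all admissible shapes,
\[ \EE(X) \leq \sum_{\ell_1, \ldots, \ell_r \geq 1,\ \sum_j k_j \ell_j \leq n} \Bigl(n - \textstyle\sum_j k_j \ell_j + 1\Bigr) \prod_{j = s+1}^r q^{-(k_j - 1)\ell_j}. \]

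The key computation is to execute the sum over $\ell_1, \ldots, \ell_s$ first. Fixing $(\ell_{s+1}, \ldots, \ell_r)$, setting $N = n - \sum_{j > s} k_j \ell_j$, and grouping the multiplicity-one indices by $L = \ell_1 + \cdots + \ell_s$, the number of positive compositions of $L$ into $s$ parts is $\binom{L-1}{s-1}$, so the inner sum collapses via
\[ \sum_{L = s}^{N} \binom{L-1}{s-1}(N - L + 1) = \binom{N+1}{s+1}, \]
an identity whose two sides both count $(s+1)$-element subsets of $\{1, \ldots, N+1\}$ stratified by the $s$-th smallest element. Bounding $\binom{N+1}{s+1} \leq (n+1)^{s+1}/(s+1)!$ and evaluating the remaining geometric series gives
\[ \EE(X) \leq \frac{(n+1)^{s+1}}{(s+1)!} \prod_{j = s+1}^r \frac{1}{q^{k_j - 1} - 1}. \]
Forcing $\EE(X) < 1$ and taking $(s+1)$-th roots then yields the stated bound, with the factor $(1 + o(1))$ absorbing the passage from $n+1$ to $n$ as the right-hand side grows.

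The main obstacle is recognising and applying the inner-sum identity. Without it, a naive bound replacing $n - \sum_j k_j \ell_j + 1$ by $n$ and the composition count by $n^s/s!$ in two independent steps would produce $n^{s+1}/s!$ in place of $(n+1)^{s+1}/(s+1)!$, costing a factor of $s + 1$ in the final constant and yielding a strictly weaker statement. Secondary technicalities, namely that the geometric series converge because $k_j \geq 2$ for $j > s$ and that the overcount from distinct shapes producing the same word is absorbed by the inequality, are routine.
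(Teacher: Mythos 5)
Your proposal is correct, and it gives a complete, self-contained proof of a statement that the paper itself only cites: the paper states Theorem~\ref{TaoLower} as Tao's Corollary 1 and remarks that Tao proves it via ``the probabilistic method and generating functions,'' without reproducing the argument. Your skeleton is the same first-moment computation (in the spirit of Theorem~\ref{fnqlower}), but you replace the generating-function bookkeeping by a direct count over shapes: the union bound $\Pr[W[i,j] \text{ is a $V$-instance}] \leq \sum_{\text{shapes}} q^{-\sum_j (k_j-1)\ell_j}$ is valid, the multiplicity-one letters correctly drop out, and the crux identity $\sum_{L=s}^{N}\binom{L-1}{s-1}(N-L+1)=\binom{N+1}{s+1}$ is right (both sides count $(s+1)$-subsets of $\{1,\dots,N+1\}$ by the $s$-th smallest element), so you legitimately recover the factor $(s+1)!$ rather than the weaker $s!$ that a cruder bound would give; the geometric series $\sum_{\ell\geq 1} q^{-(k_j-1)\ell} = 1/(q^{k_j-1}-1)$ converges precisely because $k_j\geq 2$ for $j>s$, and dropping the constraint $\sum_j k_j\ell_j\leq n$ only enlarges the bound. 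What the two routes buy: your direct summation is elementary and makes the source of each factor in the bound transparent (positions and multiplicity-one letters give $\binom{N+1}{s+1}$, repeated letters give the product), and as a sanity check it specializes at $s=1$ exactly to the Zimin-word bound $\f(n,q)\geq(1+o(1))\sqrt{2\prod_{j=1}^{n-1}(q^{2^j-1}-1)}$ quoted after the theorem; Tao's generating-function formulation encodes the same expectation count more systematically and extends to finer occurrence statistics, but for this corollary it yields nothing stronger. The only point worth stating explicitly in a write-up is the interpretation of the $(1+o(1))$: your argument gives avoidance whenever $(n+1)^{s+1} < (s+1)!\prod_{j>s}(q^{k_j-1}-1)$, and the asymptotic form follows as the bracketed quantity tends to infinity (e.g., $q\to\infty$), exactly as you indicate.
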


Applying Theorem~\ref{TaoLower} to Zimin words, Tao obtains
\[ \f(n,q) \geq (1 + o(1)) \sqrt{2\prod_{j=1}^{n-1} (q^{2^j-1} - 1)}. \]
As $q \rightarrow \infty$, 
\begin{eqnarray*}
	\sqrt{2\prod_{j=1}^{n-1} (q^{2^j-1} - 1)} & \sim & \sqrt{2\prod_{j=1}^{n-1} (q^{2^j-1})},
\end{eqnarray*}
and as $n \rightarrow \infty$,
 \begin{eqnarray*}
	 \sqrt{2\prod_{j=1}^{n-1} (q^{2^j-1})} & = & \sqrt{2} \left(q^{\left(\sum_{j = 1}^{n-1} (2^j - 1) \right)}\right)^{\frac{1}{2}} \\
	& \sim & \sqrt{2} \left(q^{\left(2^n - (n-1)\right)}\right)^{\frac{1}{2}} \\
	& = & q^{2^{n-1}(1 + o(1))}.
\end{eqnarray*}

\section{Using Minimal Zimin-Instances} \label{MinZimin}

\begin{defn} \label{defn:min}
	For fixed $n \in \Z^+$, a $Z_n$-instance is \emph{minimal} provided it has no $Z_n$-instance as a proper factor.

	Let $\m(n,q)$ be the number of minimal $Z_n$-instances over a fixed $q$-letter alphabet.
\end{defn}

The function $\m(n,q)$ was first introduces by \textcite{RS-14}. 
They used this concept of minimal Zimin-instances to improve the upper bounds of $\f(3,q)$ and $\f(4,2)$. 

\begin{lem}[\cite{RS-14}, Lemma 4.6] \label{RSlem1}
	The following holds for any integers $n,q > 2$:
	\[ \f(n+1,q) \leq (\f(n,q) + 1) \cdot \m(n,q) + \f(n,q). \]
\end{lem}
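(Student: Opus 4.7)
The plan is to slice the word $W$ into many short windows separated by one-letter gaps, so that each window contains a minimal $Z_n$-instance, and then apply pigeonhole to the finite set of minimal $Z_n$-instances to find two copies of a common one with a nonempty block of letters between them.

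Writing $F = \f(n,q)$ and $M = \m(n,q)$ for brevity, I would take an arbitrary $q$-ary word $W$ of length $(F+1)M + F$ and mark $M+1$ length-$F$ windows $W_0, W_1, \ldots, W_M$ inside it, where $W_i$ occupies positions $i(F+1)$ through $i(F+1)+F-1$. These windows are pairwise disjoint, and any two consecutive windows are separated by exactly one letter of $W$; the total length used equals $(M+1)F + M = (F+1)M + F$, matching $|W|$ exactly.

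Because $|W_i| = F = \f(n,q)$, each window encounters $Z_n$ and therefore contains at least one $Z_n$-instance, which I can shrink until no proper factor of it is a $Z_n$-instance to obtain a minimal $Z_n$-instance $U_i$. There are only $\m(n,q) = M$ distinct minimal $Z_n$-instances over $\Sigma$, so among the $M+1$ windows pigeonhole produces indices $i < j$ with $U_i = U_j$; call this common minimal instance $U$. The two occurrences of $U$ in $W_i$ and $W_j$ are disjoint, and at least one of the buffer letters separating $W_i$ from $W_{i+1}$ sits strictly between them. Hence the factor of $W$ they span has the form $UVU$ with $V$ nonempty, and since $Z_{n+1} = Z_n\, z_{n+1}\, Z_n$ for a fresh letter $z_{n+1}$, this $UVU$ is a $Z_{n+1}$-instance, so $W$ encounters $Z_{n+1}$ as required.

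The only subtle point is guaranteeing $|V| \geq 1$, which is precisely why the $M+1$ windows are spaced out by single-letter gaps rather than packed flush against one another; a naive wall-to-wall partition into $M+1$ blocks of length $F$ could leave adjacent copies of $U$ touching, and the need to prevent this forces the extra $+\,M$ term hidden in the bound $(F+1)M + F$ beyond the obvious $(M+1)F$.
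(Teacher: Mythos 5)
Your proof is correct and is exactly the argument the paper has in mind: it applies the window-plus-separator construction from Theorem~\ref{upper}, but pigeonholes on the $\m(n,q)$ minimal $Z_n$-instances inside the length-$\f(n,q)$ windows instead of on the window contents themselves, and the single-letter gaps correctly guarantee the nonempty middle block needed for a $Z_{n+1}$-instance. Nothing is missing.
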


\begin{lem}[\cite{RS-14}, Lemma 4.7] \label{RSlem2}
	\[ \m(2,q) = q! \cdot \sum_{i = 1}^{q-1} \frac{2^{q-1-i}}{i!}. \]
\end{lem}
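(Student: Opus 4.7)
The plan is to prove the formula by induction on $q$ via the recurrence
\[
\m(2, q) = q \cdot \bigl(2\,\m(2, q-1) + 1\bigr).
\]
Dividing through by $q!$ gives $\m(2,q)/q! = 2\,\m(2,q-1)/(q-1)! + 1/(q-1)!$, a first-order linear recursion whose telescoping solution with base case $\m(2,1) = 0$ is exactly $\sum_{i=1}^{q-1} 2^{q-1-i}/i!$. Thus the entire content of the lemma is the combinatorial establishment of the recurrence.

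First I would show that any minimal $Z_2$-instance $U$ has the form $U = \alpha V \alpha$ with $\alpha$ a single letter and $V$ a nonempty word. Indeed, if $U = AxA$ with $|A| \geq 2$, letting $\alpha$ be the first letter of $A$ and writing $A = \alpha A'$ produces the strictly shorter proper prefix $\alpha (A'x) \alpha$ of $U$, which is itself a $Z_2$-instance, contradicting the minimality of $U$. With $\alpha$ and $V$ isolated, the condition that no proper factor of $U$ be a $Z_2$-instance translates into tight constraints: no internal factor of $V$ may be a $Z_2$-instance, and the possible occurrences of $\alpha$ inside $V$ are severely restricted by examining every proper prefix $\alpha V[1\,\text{-}\,k]$ and every proper suffix $V[k\,\text{-}\,|V|]\alpha$. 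Case analysis of these factors yields a clean description of which middles $V$ are admissible for a given outer letter $\alpha$.

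The heart of the proof is to count, for each fixed $\alpha$, the admissible middles and show there are exactly $2\,\m(2,q-1) + 1$ of them. The "$+1$" accounts for a unique canonical or degenerate shape (for instance a middle of the form $V = \alpha\alpha$, which produces $U = \alpha^4$), while the "$2\,\m(2,q-1)$" arises from a two-to-one correspondence with minimal $Z_2$-instances over the remaining $q-1$ letters: each smaller minimal instance $U'$ should give rise to two distinct admissible middles via a choice of orientation or reflection when $U'$ is embedded inside $V$. Summing over the $q$ choices of outer letter yields the recurrence. The main obstacle is precisely this bijection step: one must verify both that every admissible $V$ arises from the construction and that distinct pairs (orientation, smaller instance) never produce the same $V$, which requires a careful factorization argument to rule out overlapping $Z_2$-decompositions inside $V$ — the subtle combinatorics of Zimin words, where a single word can admit several $Z_n$-decompositions, is exactly what makes this step delicate.
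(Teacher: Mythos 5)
Your overall skeleton---every minimal $Z_2$-instance has the form $\alpha V\alpha$ with $\alpha$ a single letter, then classify and count the admissible middles per outer letter---is sound, and the recurrence $\m(2,q)=q\,(2\,\m(2,q-1)+1)$ you propose is in fact a true identity for the number of minimal $Z_2$-instances. The genuine gap is the base case, and it is not cosmetic: over a one-letter alphabet $\{a\}$ the word $aaa$ is a $Z_2$-instance with no $Z_2$-instance as a proper factor, so $\m(2,1)=1$, not $0$. Telescoping your (correct) recurrence from the correct base value yields $\m(2,q)=q!\sum_{i=0}^{q-1}2^{q-1-i}/i!$, which differs from the displayed formula by its dominant term $2^{q-1}q!$. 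A direct check at $q=2$ confirms this: under Definition~\ref{defn:min} the minimal $Z_2$-instances over $\{0,1\}$ are exactly $000$, $111$, $010$, $101$, $0110$, $1001$ --- six words --- while the printed formula gives $2$; the sum-from-$i=0$ version is also the one consistent, via Lemma~\ref{RSlem1}, with the value $\f(3,q)\approx\sqrt{e}\,2^q(q+1)!$ quoted afterwards. So your plan, carried out correctly, actually establishes a formula different from (and correcting) the one stated; declaring $\m(2,1)=0$ in order to land on the printed statement is exactly where the argument breaks. (The paper itself offers no proof to compare against---it only cites the source.)

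Two further repairs are needed in the combinatorial core. First, the degenerate middle is $V=\alpha$, giving $U=\alpha^3$; your candidate $V=\alpha\alpha$ gives $U=\alpha^4$, which is not minimal because it contains the proper factor $\alpha^3$. The case analysis in fact shows the admissible middles for a fixed outer letter $\alpha$ are precisely $V=\alpha$ together with all nonempty $Z_2$-avoiding words over $\Sigma\setminus\{\alpha\}$ (a word avoids $Z_2$ iff each letter occurs in a single block of length at most two). Second, the two-to-one correspondence does exist, but the binary choice is not an ``orientation or reflection'': send a pair consisting of a minimal instance $U'=y\,v'\,y$ over $\Sigma\setminus\{\alpha\}$ and $e\in\{1,2\}$ to the middle $y^{e}v'$ (to $y^{e}$ when $U'=y^3$); conversely a nonempty avoiding middle decomposes uniquely as $y^{e}V''$ with $y\notin V''$, recovering $e$ and $U'$ (namely $yV''y$, or $y^3$ if $V''$ is empty). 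With the correct base case and this bijection, your induction goes through and proves $\m(2,q)=q!\sum_{i=0}^{q-1}2^{q-1-i}/i!$.
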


\begin{thm}[\cite{RS-14}, Theorem 4.4] \text{}
	\begin{itemize}
		\item $\f(1,q) = 1$;
		\item $\f(2,q) = 2q+1$;
		\item $\f(3,2) = 29$, $\f(3,q) = \sqrt{e} \cdot 2^q (q+1)! + 2q + 1$;
		\item $\f(4,2) \leq 236489$.
	\end{itemize}
\end{thm}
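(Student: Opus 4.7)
The plan is to dispatch the four claims in sequence, using Lemma~\ref{RSlem1} and Lemma~\ref{RSlem2} for the recursive upper bounds, explicit constructions for matching lower bounds, and a computational enumeration for the value $\m(3,2)$.

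First, $\f(1,q) = 1$ is immediate: $Z_1$ is a single letter, so every length-$1$ word encounters it while the empty word does not. Next, for $\f(2,q) = 2q+1$, observe that each individual letter is a minimal $Z_1$-instance, so $\m(1,q) = q$; applying the $n=1$ case of Lemma~\ref{RSlem1} gives $\f(2,q) \leq (1+1)q + 1 = 2q+1$. The matching lower bound is witnessed by the length-$2q$ word $\sigma_1 \sigma_1 \sigma_2 \sigma_2 \cdots \sigma_q \sigma_q$, in which every letter occupies a single contiguous block; a short case analysis on $|U|$ shows no factor can have the form $UVU$ with $U,V$ nonempty.

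For $\f(3,q)$, substitute $\f(2,q) = 2q+1$ into Lemma~\ref{RSlem1} with $n=2$ to get $\f(3,q) \leq (2q+2)\,\m(2,q) + (2q+1)$, then apply Lemma~\ref{RSlem2} and simplify:
\[
(2q+2)\,\m(2,q) = 2(q+1)! \sum_{i=1}^{q-1} \frac{2^{q-1-i}}{i!} = 2^q (q+1)! \sum_{i=1}^{q-1} \frac{1}{2^i \cdot i!}.
\]
Bounding the tail by $\sum_{i=0}^{\infty} \frac{(1/2)^i}{i!} = \sqrt{e}$ yields the claimed $\f(3,q) \leq \sqrt{e} \cdot 2^q (q+1)! + 2q + 1$. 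Matching lower bounds come from explicit avoiding-word constructions; in particular, the value $\f(3,2) = 29$ from Table~\ref{fn2} is established by exhibiting a length-$28$ binary word avoiding $abacaba$ (Appendix~\ref{AppendZ}) and verifying that every length-$29$ binary word encounters $Z_3$.

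Finally, apply Lemma~\ref{RSlem1} with $n=3$, $q=2$ to obtain $\f(4,2) \leq 30\,\m(3,2) + 29$. The principal obstacle is determining $\m(3,2)$: an exhaustive enumeration of binary $Z_3$-instances, keeping only those with no proper $Z_3$-instance as a factor, yields $\m(3,2) = 7882$, giving the stated $\f(4,2) \leq 30 \cdot 7882 + 29 = 236489$. The remaining arithmetic and the small base cases are routine; the non-trivial ingredients are the correctness and completeness of the $\m(3,2)$ enumeration and the explicit constructions certifying the lower bounds.
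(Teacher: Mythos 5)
Your upper-bound skeleton is exactly the route the paper indicates (it simply cites \textcite{RS-14} and remarks that Lemma~\ref{RSlem1}, Lemma~\ref{RSlem2}, and the computed value $\m(3,2)=7882$ are the ingredients), and the arithmetic $30\cdot 7882+29=236489$ for $\f(4,2)$ is correct. But as a proof of the theorem as stated there are genuine gaps. Three of the four items assert exact values, and for $\f(3,q)$ you supply no lower bound at all: ``matching lower bounds come from explicit avoiding-word constructions'' is an assertion, and for general $q$ the construction of a $Z_3$-avoiding word of length on the order of $2^q(q+1)!$ (essentially by chaining together all minimal $Z_2$-instances) is the substantive half of the Rytter--Shur result; without it you have only an inequality. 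Worse, your own computation lands \emph{below} the claimed value: $(2q+2)\,\m(2,q)=2^q(q+1)!\sum_{i=1}^{q-1}2^{-i}/i!$, and since the sum starts at $i=1$ it is bounded by $\sqrt{e}-1$, not $\sqrt{e}$ (your ``tail bound'' silently adds the $i=0$ term). So this route gives roughly $(\sqrt{e}-1)2^q(q+1)!+2q+1$, which is strictly smaller than the stated $\sqrt{e}\,2^q(q+1)!+2q+1$; an equality with the stated formula cannot be extracted from this calculation, and the mismatch (which points to an indexing or asymptotic subtlety in the cited statement) needed to be confronted rather than absorbed.

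Second, Lemma~\ref{RSlem1} is stated only for $n,q>2$, yet you invoke it with $n=1$ (for $\f(2,q)$), $n=2$ (for $\f(3,q)$), and $q=2$ (for $\f(4,2)$). The $n=3$, $q=2$ application is the one the paper itself describes, but the $n=1,2$ uses need either a justification that the lemma extends to those parameters or direct arguments -- e.g., $\f(2,q)\le 2q+1$ follows from pigeonhole, since in a $q$-ary word of length $2q+1$ some letter occurs three times and two of those occurrences are non-adjacent. Your lower-bound word $\sigma_1\sigma_1\sigma_2\sigma_2\cdots\sigma_q\sigma_q$ for $\f(2,q)$ is fine, and delegating $\f(3,2)=29$ and $\m(3,2)=7882$ to exhaustive search matches the paper; the unproven general-$q$ lower bound for $\f(3,q)$ and the out-of-range invocations of Lemma~\ref{RSlem1} are the real gaps.
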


Lemma~\ref{RSlem1} follows from the same method used in Theorem~\ref{upper}. The bound on $\f(4,2)$ was established using a computer search to find $\m(3,2) = 7882$.

\chapter{Word Densities} \label{DENSE}

\begin{defn} \label{defn:density}
	The \emph{factor density} of $V$ in $W$, denoted $\d(V,W)$, is the proportion of length-$|V|$ substrings of $W$ that are copies of $V$; that is
	\[ \d(V,W) = \frac{\big| \big\{(i,j) : 0\leq i < j \leq |W|, W[i,j]=V \big\}\big|}{|W| +1 - |V|}. \]

	The \emph{(instance) density} of $V$ in $W$, denoted $\delta(V,W)$, is the proportion of substrings of $W$ that are instances of $V$; that is
	\[ \delta(V,W) = \frac{\big| \big\{(i,j) : 0\leq i < j \leq |W|, W[i,j] \text{ is a } V\text{-instance} \big\}\big|}{\binom{|W|+1}{2}}. \]
	The \emph{($q$-)liminf density} of $V$ is,
	\[  \underline{\delta}(V,q) = \liminf_{\substack{W \in [q]^* \\ |W|\rightarrow \infty}} \delta(V,W). \]
\end{defn}

The liminf density is defined in terms of alphabet $[q]$ for convenience, but any fixed $q$-letter alphabet would suffice.
We need not define a limsup density or liminf factor density, as these would always be trivially 1 or 0.
A $\Sigma$-limsup factor density of $V$ might be of interest for alphabet $\Sigma \supseteq \L(V)$, but we do not investigate this here.
Table~\ref{table:numer} below gives a numeric summary of the best know bounds for $\underline{\delta}(Z_n,q)$.

The value of $\underline{\delta}(Z_2,q)$ for $q \geq 2$ is from Theorem~\ref{dZ2}. 
For $n=3$, the upper bound comes from Section~\ref{prob}, and the lower bounds are stated in Corollary~\ref{dZ3}. 
There we establish that $\underline{\delta}(Z_3,2) \geq \frac{1}{54}$, but Section~\ref{DeB} gives reason to believe that the truth is greater than $1/28$. 
Lower bounds for $\underline{\delta}(Z_4,q)$ are found in Theorem~\ref{dZn}, though the best lower bound for $q=2$ is in Corollary~\ref{dZ3}. 
Finally, the best upper bounds for $\underline{\delta}(Z_n,q)$ when $n\geq 4$ are from Section~\ref{IV}.

\begin{table}[ht]
\centering
\begin{threeparttable}

	\caption{Best known bounds for the $q$-liminf density of $Z_n$.} \label{table:numer}

	\begin{tabular}{c}
%
	$\begin{array}{c|c|c|c|c|c}
		\underline{\delta}(Z_n,q) &q=2&3&4&5&\cdots \\ \hline
		n=2 & 1/2 = .5 & 1/3 \approx .333 & 1/4 = .25 & 1/5 = .2 & \cdots \\ \hline
		3&\begin{matrix} .119 \\ 1.85\cdot 10^{-2}\end{matrix}  &\begin{matrix}1.84 \cdot 10^{-2} \\ 8.33\cdot 10^{-4} \end{matrix}& \begin{matrix}5.19 \cdot 10^{-3}\\ 5.31 \cdot 10^{-5} \end{matrix}& \begin{matrix}2.00 \cdot 10^{-3} \\ 3.22 \cdot 10^{-7} \end{matrix} & \cdots\\ \hline
	
		4&\begin{matrix}1.12\cdot10^{-3} \\ 2.40\cdot10^{-7} \end{matrix}  &\begin{matrix}8.80 \cdot 10^{-6} \\ 6.64\cdot 10^{-392943} \end{matrix} & \begin{matrix}3.23 \cdot 10^{-7} \\ 9.42\cdot 10^{-233250395} \end{matrix} & \begin{matrix}2.58 \cdot 10^{-8} \\ - \end{matrix}& \cdots \\ \hline
		5&\begin{matrix}3.43\cdot10^{-8} \\ - \end{matrix} &\begin{matrix}6.13 \cdot 10^{-13} \\ - \end{matrix} & \begin{matrix}3.01 \cdot 10^{-16} \\ - \end{matrix} & \begin{matrix}8.46 \cdot 10^{-19} \\ - \end{matrix} & \cdots \\ \hline
		\vdots & \vdots & \vdots & \vdots & \vdots & \ddots
	\end{array}$
	\end{tabular}
%

\end{threeparttable}
\end{table}

\section{Density Comparisons}


For graphs $F$ and $G$, $t(F,G)$ is the homomorphism density of $F$ in $G$: 
\[ t(F,G) = \frac{|\{\phi:V(F)\rightarrow V(G) \mid xy \in E(F) \Rightarrow \phi(x)\phi(y) \in E(G)\}|}{|V(G)|^{|V(F)|}} . \]
$K_n$ is the complete graph on $n$ vertices; that is, the graph $\ang{[n], \binom{[n]}{2}}$ with all $\binom{n}{2}$ possible edges.
In particular, $K_2$ is often simply called the edge graph, and $K_3$ the triangle graph.
For every graph $G$, we can plot an ordered pair $(x,y) = (t(K_2,G),t(K_3,G))$.
The closure of the set of all such points forms a connected region in $[0,1]^2$ (see Section 2.1 of \cite{L-12}), with which we can visualize the relationship between edge-densities and triangle-densities in graphs.
The tight upper bound for this region is $y \leq x^{\frac{3}{2}}$, which is a case of the Kruskal-Katona Theorem (\cite{K-63}, \cite{K-68}).
The lower bound of $y \geq x(2x-1)$ is a result of \textcite{G-59}, but was shown to be tight only for $x = 1 - \frac{1}{k}$ by \textcite{B-76}.

We perform a similar comparison for word densities of some fundamental words. 
In Section~\ref{Compareakal}, we calculate the limit set, as $|W| \rightarrow \infty$, of the closure of the set of points of the form $( \d(a^k,W), \d(a^\ell,W))$.
Then Section~\ref{Comparez2z3} shows all points $(\delta(Z_2,W),\delta(Z_3,W))$ for all $W$ of particular, small lengths, presenting them in the context of bounds to be proved later.

\subsection{Factor Density of \texorpdfstring{$a^k$}{aaa.....a}.} \label{Compareakal}

\begin{lem} \label{lem:uppertri}
	For word $W$ and integers $0 < k < \ell$, 
	\[ \d(a^\ell,W) \leq \d(a^k,W), \]
	with equality only when either $\d(a^\ell,W) = 1$ (that is, $W = a^m$ with $m \geq \ell$) or $\d(a^k,W) = 0$.
\end{lem}

\begin{proof}
	Within any $ba^rc$ in $W$ with $a \not\in \{b, c\}$ and $r \geq \ell$, there are $\ell - k$ more copies of $a^k$ than of $a^\ell$.
	Hence, unless $\d(a^\ell,W) = 0$, 
	\[\d(a^k,W) \geq \frac{(|W|+1-\ell)\d(a^\ell,W) + (\ell - k)}{|W|+1-k} 
\geq \d(a^\ell,W),\]
	with equality on the right only when $\d(a^\ell,W) = 1$.
\end{proof}

\begin{lem}
	For integers $0 < k < \ell$ and rational number $d_k \in \left[0,\frac{\ell-k}{\ell}\right] \cap \Q$, there exit arbitrarily large words $W$ with $\d(a^k,W) = d_k$ and $\d(a^\ell,W) = 0$.
\end{lem}

\begin{proof}
	Let $d = \frac{u}{v}$ for positive integers $1\leq u<v$.
	For $u,v \in \Z^+$, $\frac{u}{v} = d \leq \frac{\ell-k}{\ell}$ implies $v(\ell - k) - u\ell \geq 0$.
	Let $W_r = (a^{\ell-1}b)^{ru}b^{r(v(\ell - k) - u\ell) + k - 1}$ for $r \in \Z^+$.
	The number of length-$k$ substrings in $W_r$ is
	\[ |W_r| + 1 - k = (\ell - 1 + 1)(ru) + (r(v(\ell - k) - u\ell) + k - 1) + 1 - k = rv(\ell - k). \]
	Now $a^\ell \not \leq W$, and the number of occurrences of $a^k$ in $W_r$ is
	\[ ((\ell-1) + 1 - k)(ur) = ru(\ell - k). \]
	Therefore, $\d(a^\ell,W_r)=0$ and 
	\[ \d(a^k,W_r) = \frac{ru(\ell - k)}{rv(\ell - k)} = \frac{u}{v} = d. \]

\end{proof}

\begin{lem} \label{lem:lowertri}
	For integers $0 < k < \ell$, and as $|W| \rightarrow \infty$, 
	\[ \ell(\d(a^k,W) - 1) \lesssim k(\d(a^\ell,W) - 1) . \]
\end{lem}

\begin{proof}
	Let $|W| = M$. 
	For given $W$, set $d_k = \d(a^k,W)$. 
	Also, let $c_k$ count the number of maximal factors in $W$ of the form $a^x$ for $k\leq x \leq \ell -1$ and $A_k$ count the number of $a^k$-occurrences in the $c_k$ such strings, so $A_k \leq (\ell - k)c_k$.
	Similarly, set $d_\ell = \d(a^\ell, W)$ and let $c_\ell$ count the number of maximal factors in $W$ of the form $a^x$ for $\ell \leq x$ and $A_\ell$ count the number of $a^\ell$-occurrences in the $c_\ell$ such strings.
	Hence, as $M \rightarrow \infty$,
	\begin{eqnarray*}
		d_k & = & \frac{c_\ell(\ell-k) + A_\ell + A_k}{M + 1 - k} \\
			& \sim & \frac{c_\ell(\ell-k) + A_\ell + A_k}{M+1}; \\
		d_\ell & = & \frac{A_\ell}{M + 1 - \ell} \\
			& \sim & \frac{A_\ell}{M+1}; \\
		M & \geq & \ell c_\ell + A_\ell + k c_k + A_k - 1.
	\end{eqnarray*}

	The desired asymptotic inequality is $\ell(d_k - 1) \lesssim k(d_\ell - 1)$, which is equivalent to $\ell d_k - k d_\ell \lesssim \ell - k$. 
	Applying what we said about $d_k$, $d_\ell$, and $M$:
	\begin{eqnarray*}
		\ell d_k - k d_\ell & \sim & \frac{\ell[c_\ell(\ell-k) + A_\ell + A_k] - k[A_\ell]}{M+1} \\
		& \leq & \frac{\ell[c_\ell(\ell-k) + A_\ell + A_k] - k[A_\ell]}{\ell c_\ell + A_\ell + k c_k + A_k}.
	\end{eqnarray*}
	Therefore, it suffices to show one of the following equivalent statements, the last of which we already established.
	\begin{eqnarray*}
		\frac{\ell[c_\ell(\ell-k) + A_\ell + A_k] - k[A_\ell]}{\ell c_\ell + A_\ell + k c_k + A_k} & \leq & \ell - k; \\
		\ell[\ell c_\ell + A_\ell + A_k] - k[\ell c_\ell + A_\ell] & \leq & (\ell - k )[\ell c_\ell + A_\ell + k c_k + A_k]; \\
		k([\ell c_\ell + A_\ell + k c_k + A_k] - [\ell c_\ell + A_\ell]) & \leq & \ell([\ell c_\ell + A_\ell + k c_k + A_k] - [\ell c_\ell + A_\ell + A_k]); \\
		k(k c_k + A_k) & \leq & \ell(kc_k); \\
		k c_k + A_k & \leq & \ell c_k; \\	
		A_k & \leq & (\ell - k) c_k. 
	\end{eqnarray*}
\end{proof}
%
%
%

\begin{lem} \label{lem:densetri}
	Let $0 < k < \ell$ be integers and $(d_k, d_\ell) \in \Q^2$ be found on the triangle defined by the following inequalities:
	\begin{itemize}
		\item $0 \leq d_\ell \leq d_k$; 
		\item $k(d_\ell - 1) \geq \ell(d_k - 1)$.
	\end{itemize}
	Then for all $\epsilon > 0$, there exist arbitrarily long words $W$ such that
	\[ \big| \d(a^k,W) - d_k \big| < \epsilon \textnormal{ and }  \big| \d(a^\ell,W) - d_\ell \big| < \epsilon. \]
\end{lem}

\begin{proof}
	Since $k(d_\ell - 1) = \ell(d_k - 1)$ and $d_\ell = 0$ intersect when $d_k = \frac{\ell - k}{\ell}$, We can break the triangle into two cases:
	\begin{enumerate}[(I)]
		\item $0 \leq d_\ell \leq d_k \leq \frac{\ell - k}{\ell}$.
		\item $0 \leq d_\ell \leq d_k$, $\frac{\ell-k}{\ell} < d_k$, $k(d_\ell - 1) \geq \ell(d_k - 1)$.
	\end{enumerate}

	Without loss of generality, let $d_k = \frac{u_k}{v}$ and $d_\ell = \frac{u_\ell}{v}$ for some integers $u_\ell, u_k, v \in \Z$ satisfying $0 \leq u_\ell \leq u_k \leq v \neq 0$.
	For $r \in \Z^+$, define length $vr$-word $W_r$ to be
	\[ W_r = a^{ru_\ell}(ba^{\ell-1})^{\floor{\frac{ru_k - ru_\ell}{\ell - k}}}b^{r'},\]
	with $r' = rv - ru_\ell - \ell\floor{\frac{ru_k - ru_\ell}{\ell - k}}$ in order that $|W_r| = vr$.
	This word is constructed to give necessary densities for all sufficiently large $r$:
	\begin{eqnarray*}
		d(a^\ell,W_r) & = & \frac{ru_\ell + 1 - \ell}{rv + 1 - \ell} \sim \frac{ru_\ell}{rv} = d_\ell; \\
		d(a^k,W_r) & = & \frac{(ru_\ell + 1 - k) + (\ell - k)\floor{\frac{ru_k - ru_\ell}{\ell - k}}}{rv + 1 - k} \sim \frac{ru_\ell + (\ell-k)\frac{ru_k - ru_\ell}{\ell - k}}{rv} = d_k.
	\end{eqnarray*}
	But for $W_r$ to be well-defined, we need $r' \geq 0$. It suffices to show that 
	\[ rv - ru_\ell - \ell\left(\frac{ru_k - ru_\ell}{\ell - k}\right) \geq 0, \]
	which is equivalent to both of the following:
	\[ (v - u_\ell)(\ell - k) \geq \ell(u_k - u_\ell); \quad \frac{\ell - k}{\ell} \geq \frac{u_k - u_\ell}{v - u_\ell}. \]

	\textbf{Case (I):} Since $u_k \leq v$,
	\[ \frac{u_k - u_\ell}{v - u_\ell} \leq \frac{u_k}{v} = d_k \leq \frac{\ell - k}{\ell}.\]

	\textbf{Case (II):} Since $k(d_\ell - 1) \geq \ell(d_k - 1)$, 
	\[ \frac{k}{\ell} \leq \frac{1 - d_k}{1 - d_\ell}, \]
	which implies
	\[ \frac{\ell - k}{\ell} = 1 - \frac{k}{\ell} \geq 1 -  \frac{1 - d_k}{1 - d_\ell} =  \frac{d_k - d_\ell}{1 - d_\ell} = \frac{u_k - u_\ell}{v - u_\ell}.\]
\end{proof}

\begin{thm} \label{thm:dtri}
	For integers $0 < k < \ell$ and ordered pair $(x,y) \in [0,1]^2$, there exist arbitrarily long words $W$ with $\d(a^k,W)\sim x$ and $\d(a^\ell,W) \sim y$ if and only if $(x,y)$ falls in the triangular region shown in Figure~\ref{figure:dakl}, defined as follows:
	\begin{itemize}
		\item $0 \leq y \leq x$; and 
		\item $k(y - 1) \geq \ell(x - 1)$.
	\end{itemize}
\end{thm}

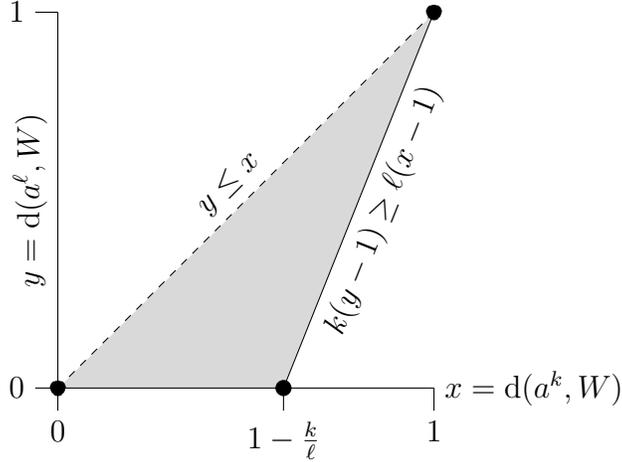
\begin{figure}[ht]
\centering
\begin{threeparttable}

	\begin{tabular}{c}
	\begin{tikzpicture}
		\filldraw[fill=black!15,draw=white] (0,0)--(5,5)--(3,0)--(0,0);
		\draw (0,0) --  node [sloped,above] {$y = \d(a^\ell,W)$} (0,5);
		\draw (0,0) -- (5,0) node [right] {$x = \d(a^k,W)$};
		\foreach \x in {0,1} {
			\draw (5*\x,0)--(5*\x,-.3) node [below] {\x};
			\draw (0,5*\x)--(-.3,5*\x) node [left] {\x};
		};
		\filldraw (5,5) -- node[sloped,below] {$k(y - 1) \geq \ell(x - 1)$} (3,0) circle (.1cm) -- (3,-.3) node [below] {$1 - \frac{k}{\ell}$};
		\filldraw[dashed] (0,0) circle (.1cm) -- node [sloped,above] {$y\leq x$} (5,5) circle (.1cm);
	\end{tikzpicture}
	\end{tabular}

	\caption{Relation between $\d(a^k,W)$, $\d(a^\ell,W)$ for $0 < k < \ell$ as $|W|\rightarrow \infty$.} \label{figure:dakl}

\end{threeparttable}
\end{figure}

\begin{proof}
	The upper and lower bounds are established in Lemmas~\ref{lem:uppertri} and \ref{lem:lowertri}, respectively.
	The density of points in this triangle is established in Lemma~\ref{lem:densetri}.
\end{proof}

\subsection{Instance Density of Zimin Words} \label{Comparez2z3}

The same sort of comparison as we see in Theorem~\ref{thm:dtri} can also be made for instance densities.
Figure~\ref{figure:deltaZ2Z3} shows the relationship between the instance densities of $Z_2$ and $Z_3$ in binary words of length 28.
See Appendix~\ref{Z2Z3} for plots corresponding to binary words of lengths 13, 16, 19, 22, 25, and 28 and the code used to generate the points.
The graphs also give a preview of some asymptotic results that we will establish later.

\begin{figure}[ht]
\centering
\begin{threeparttable}

	\begin{tabular}{c}
		\includegraphics[width=400px]{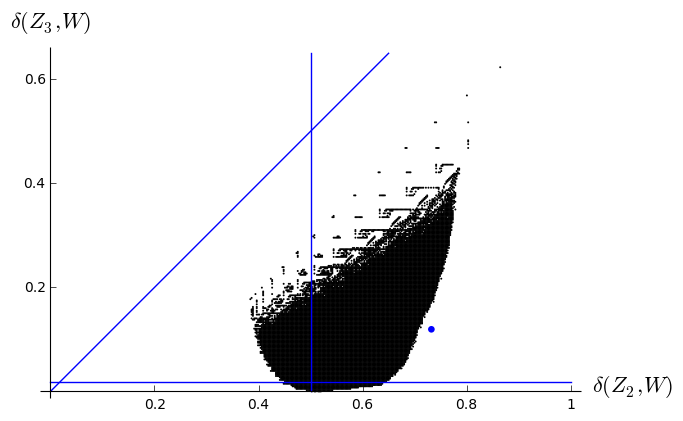}
	\end{tabular}

	\caption{All $(x,y)$ with $x=\delta(Z_2,W)$, $y=\delta(Z_3,W)$ for $W \in [2]^{28}$.} \label{figure:deltaZ2Z3}

	\begin{tablenotes}[flushleft]
		\item Assuming binary $W$:
		\item The line $y=x$ is an absolute upper bound.
		\item The vertical blue line is $\underline{\delta}(Z_2,2) = \frac{1}{2}$.
		\item The horizonal blue line is a lower bound on $\underline{\delta}(Z_3,2)$.
		\item The point at $\approx (0.7322, 0.1194)$ shows expected densities in large random $W$.
	\end{tablenotes}

\end{threeparttable}
\end{figure}

\section{Minimum Density of Zimin Words}

Recall that $\delta(Z,W)$, the (instance) density of word $Z$, is the proportion of substring of $W$ that are $Z$-instances. 
Thus, $\delta(Z,W)$ can always be written as a rational number with denominator $\binom{|W|+1}{2}$, the number of substrings of $W$.
Let us begin with the following trivial facts.
\begin{fact} 
	$\delta(Z_1,W) = 1$ for every nonempty word $W \neq \varepsilon$.
\end{fact}

\begin{fact} \label{fact:triv2}
	For any $q \in \Z^+$, if $V$ has no recurring letter, $\underline{\delta}(V,q) = 1$.
\end{fact}

\begin{proof}
	The density of $V$ is bounded above by 1.
	As $|W|$ grows, the proportion of substrings of length at least $|V|$ goes to 1:
		\[\sum_{\ell = |V|}^{|W|} (|W| + 1 - \ell) \sim \binom {|W|+1}{2}.\]
	Since no letter occurs twice in $V$, every word of length at least $|V|$ is a $V$-instance.
\end{proof}

The remainder of this chapter is primarily devoted to finding $\underline{\delta}(Z_n,q)$, the liminf density of Zimin words.

\begin{thm} \label{dZ2} 
	\[ \underline{\delta}(Z_2,q) = \frac{1}{q}.\]
\end{thm}

\begin{proof}
	Fix alphabet $\{x_0, \ldots, x_{q-1}\}$.
	Given word $W$, let $a_i$ be the number of occurrences of $x_i$ in $W$ for each $i<q$. The number of $Z_2$-instances of the form $x_iBx_i$ is at least
	\[\binom{a_i}{2} - (a_i - 1),\]
	where $(a_i-1)$ is subtracted to avoid counting consecutive occurrences of $x_i$.
	Therefore, using the Cauchy-Schwarz inequality,
	\begin{eqnarray*}
		\binom{|W|+1}{2} \delta(Z_2,W) & \geq & \sum_{i = 0}^{q-1} \left(\binom{a_i}{2} - (a_i - 1)\right) \\
		& = & \left( \sum_{i = 0}^{q-1} \frac{a_i(a_i-1)}{2}\right) - \left(\sum_{i = 0}^{q-1}  (a_i - 1)\right)\\
		& = & \frac{1}{2} \left(\sum_{i = 0}^{q-1} a_i^2\right) - \frac{3}{2}\left(\sum_{i = 0}^{q-1}  a_i\right) + q\\
		& \geq &  \frac{1}{2q} \left(\sum_{i = 0}^{q-1} a_i\right)^2 - \frac{3}{2}\left(\sum_{i = 0}^{q-1}  a_i\right) + q\\
		& = & \frac{|W|^2}{2q} - \frac{3|W|}{2} + q.
		\\
		\delta_W(Z_2) & \geq & \left(\frac{|W|^2}{2q} - \frac{3|W|}{2} + q\right) \frac{1}{\binom{|W|+1}{2}}\\
		& \sim & \frac{1}{q}.
	\end{eqnarray*}

	Consider words $W_k = x_0^kx_1^k\cdots x_{q-1}^k$, so $|W_k| = qk$.
	Every $Z_2$-instance in $W_k$ is with subword $x_i^\ell$ for $3\leq \ell \leq k$.
	Therefore
	\begin{eqnarray*}
		\delta(Z_2,W_k) & = & \frac{\sum_{i = 0}^{q-1} \left(\binom{k}{2} - (k - 1)\right)}{\binom{qk+1}{2}}\\
		& \sim & \frac{qk^2/2}{(qk)^2/2}\\
		& = & \frac{1}{q}.
	\end{eqnarray*}
\end{proof}

Recall that the function $\f(n,q)$ from Chapter~\ref{AVOID} gives the least $M$ such that every $q$-ary word of length $M$ encounters $Z_n$.

\begin{thm} \label{dZn}
	\[ \underline{\delta}(Z_{n+1},q) \geq \frac{1}{(\f(n,q) - 2^n +2)^2 q^{\f(n,q)+1}}.\]
\end{thm}

\begin{proof}
	On a fixed $q$-letter alphabet, there are fewer than $q^{\f(n,q)+1}$ words of length at most $\f(n,q)$. 
	In particular, there are fewer than $q^{\f(n,q)+1}$ $Z_n$-instances of length at most $\f(n,q)$. 
	If given word $W$ is spliced into substrings of length $\f(n,q)$, each substring is guaranteed to contain a $Z_n$-instance. 
	In fact, since the shortest images of $Z_n$ are length $2^n - 1$, we can allow the substrings to overlap by $2^n-2$ letters and still avoid counting the same encounter of $Z_n$ twice.
	Picking one $Z_n$-instance from each substring, we form a set of $\floor{|W|/\f(n,q)}$ nonoverlapping $Z_n$-occurrences in $W$. Enumerate the $Z_n$-instances of length at most $\f(n,q)$ by $V_0, V_1, \ldots, V_{k-1}$ for some $k < q^{\f(n,q)+1}$. Let $a_i$ be the number of occurrences of $V_i$ in the set for each $i < k$. Then
	\[\sum_{i=0}^{k-1} a_i = \floor{\frac{|W|}{\f(n,q) - (2^n -2)}}.\]
	Therefore, 
	\begin{eqnarray*}
		\binom{|W|+1}{2} \delta(Z_{n+1},W) & \geq & \sum_{i=0}^{k-1}  \left(\binom{a_i}{2} - (a_i - 1)\right) \\
		& \sim & \frac{1}{2} \left(\sum_{i = 0}^{k-1} a_i^2\right)\\
		& \geq &  \frac{1}{2k} \left(\sum_{i = 0}^{k-1} a_i\right)^2 \\
		& = & \frac{\floor{\frac{|W|}{\f(n,q) - (2^n -2)}}^2}{2k}\\
		\delta(Z_{n+1},W) & \gtrsim & \frac{\floor{\frac{|W|}{\f(n,q) - (2^n -2)}}^2}{2k}\frac{1}{\binom{|W|+1}{2}}\\
		& \sim & \frac{1}{(\f(n,q) - 2^n + 2)^2k}\\
		& > & \frac{1}{(\f(n,q) - 2^n + 2)^2 q^{\f(n,q)+1}}.
	\end{eqnarray*}
\end{proof}

%

%

We call a $Z_n$-instance \emph{minimal} provided it has no proper factor that is also a $Z_n$-instance (a concept  introduced by \cite{RS-14}).
Recall that $\m(n.q)$ is the number of minimal $Z_n$-instances over a fixed $q$-letter alphabet.
Any time a string encounters $Z_n$, it must contain a minimal $Z_n$-instance.
Therefore, we can replace $q^{\f(n,q)+1}$ in Theorem~\ref{dZn} with $\m(n,q)$.

\begin{cor} \label{dZn-Mn}
	\[ \underline{\delta}(Z_{n+1},q) \geq \frac{1}{(\f(n,q) - 2^n + 2)^2 \m(n,q)}.\]
\end{cor}

\begin{lem}[Corollary of Lemma~\ref{RSlem2}] \label{M2q}
	\[\m(2,q) < q!2^q.\]
\end{lem}


Recall $\f(2,q) = 2q+1$, $\m(2,2)=6$, $\f(3,2) = 29$ (Table~\ref{fn2}), and $\m(3,2) = 7882$ \parencite{RS-14}.

\begin{cor} \label{dZ3} \text{}
	\begin{itemize}
		\item $\displaystyle \underline{\delta}(Z_3,2) \geq \frac{1}{54}$;
		\item $\displaystyle \underline{\delta}(Z_3,q) \geq \frac{1}{(2q-1)^2q!2^q}$; 
		\item $\displaystyle \underline{\delta}(Z_4,2) \geq \frac{1}{4169578}$.
	\end{itemize} 
\end{cor}
We have strong evidence in Section~\ref{DeB} that $\underline{\delta}(Z_3,2) > \frac{1}{28}$.


\subsection{Limits of Probabilities} \label{prob}

We denote with $\II_M(V,q)$ the probability that a random $q$-ary word of length $M$ is a $V$-instance. 
We prove in Chapter~\ref{DICHOT} that the limit probability $\II(V,q) = \lim_{M \rightarrow \infty} \II_M(V,q)$ always exists.
Consequently,
\[ \underline{\delta}(V,q) \leq \II(V,q). \]

In Chapter~\ref{ASYMP}, we provide upper bounds for $\II(Z_n,q)$ and a method to explicitly calculate $\II(Z_2,q)$ and $\II(Z_3,q)$, thus establishing various upper bounds for $\underline{\delta}(Z_n,q)$.

\section{The de Bruijn Graph} \label{DeB}

\begin{defn}
	For a fixed alphabet $\Sigma$ and positive integer $k$, the \emph{$k$-dimensional de Bruijn graph} is a directed graph with vertex set $\Sigma^k$ and an edge from $U$ to $W$ whenever $U = aV$ and $W = Vb$ for some $V \in \Sigma^{k-1}$ and $a,b \in \Sigma$.
\end{defn}

\textcite{E-83} construed words as walks on a de Bruijn graphs to prove bounds for permutation pattern avoidance, and his work is delivered to us from German into English by \textcite{BK-06}.
We now demonstrate how this perspective can be utilized to find minimum word densities.

\begin{defn}
	A \emph{bifix} of $W$ is a word that is both a proper initial string and terminal string.
	$W$ is \emph{bifix-free} provided $W$ has no bifix.
	$W$ is \emph{V-bifix-free} provided $W$ has no bifix that is a $V$-instance.
	$W$ is a \emph{minimal $V$-instance} provided there is no proper factor of $W$ that is a $V$-instance.
\end{defn}

Every $Z_3$-instance can be described by its shortest $Z_2$-bifix (that is, its $Z_2$-bifix that is itself $Z_2$-bifix-free). 
While building long words you can undercount the number of $Z_3$-instances by keeping track of the number of each $Z_2$-bifix-free $Z_2$-instance of length at most $k$. 

\begin{lem}
	Fix integers $q,n \geq 2$.
	Let $\mathbf{V}$ be a finite set of $Z_{(n-1)}$-bifix-free $Z_{(n-1)}$-instances in $[q]^*$.
	For $V \in \mathbf{V}$, let $c_V$ be the count of $V$-occurrences in $W$.
	Then
	\[ \delta(Z_n,W) \geq \frac{1}{\binom{|W|+1}{2}} \sum_{V \in \mathbf{V}} \left( \binom{c_V}{2} - |V|c_V\right). \]
\end{lem}

\begin{proof}
	For any given $V$-occurrence, the next $|V|$ occurrences might overlap or be consecutive, not allowing for a $Z_n$-instance. 
	But that still leaves at least $\binom{c_V}{2} - |V|c_V$ words of the form $VUV$ where $|U|>0$.
\end{proof}

Since Zimin words are unavoidable, if $\mathbf{V}$ contains all the minimal Zimin words, then the subtracted $|V|c_V$ terms is asymptotically negligible, because
	\[ \lim_{|W|\rightarrow \infty} \sum_{V \in \mathbf{V}} c_V = \infty. \]

For demonstration, the set of minimal $Z_2$-instance in $\{0,1\}^*$, which are inherently $Z_2$-bifix-free, is $\mathbf{V} = \{ 000, 010, 101, 111, 0110, 1001\}$. 
Let us look at word construction as taking a walk on the 4-dimensional de Bruijn graph.
Each of the $2^4$ vertices is a nyble, which is a 4-bit string (half the length of a byte).
In Figure~\ref{deB}, the solid arrow indicates appending a 1 and a dashed line, a 0.

\begin{figure}[ht]
\centering
\begin{threeparttable}

	\begin{tabular}{c}
	\begin{tikzpicture}[rotate=270, xscale=.58, yscale=.92]
		\tikzstyle{node} = [rectangle, fill=white, opacity=1, draw=black, minimum width = 1cm]
		\draw (-8,0) node [node] {0000};
		\draw (-6,1) node [node] {0001};
		\draw (-4,1) node [node] {0010};
		\draw (0,3) node [node] {0011};
		\draw (-4,-1) node [node] {0100};
		\draw (0,1) node [node] {0101};
		\draw (2,0) node [node] {0110};
		\draw (6,1) node [node] {0111};
		\draw (-6,-1) node [node] {1000};
		\draw (-2,0) node [node] {1001};
		\draw (0,-1) node [node] {1010};
		\draw (4,1) node [node] {1011};
		\draw (0,-3) node [node] {1100};
		\draw (4,-1) node [node] {1101};
		\draw (6,-1) node [node] {1110};
		\draw (8,0) node [node] {1111};
	
		\draw[->,thick] (-7.5,.5) -- (-6.5,1);
		\draw[->,thick] (-5.5,1.5) -- (-.5,3);
		\draw[->,thick] (-3.5,1) -- (-.5,1);
		\draw[->,thick] (.5,3) -- (5.5,1.5);
		\draw[->,thick] (-3.5,-.4) -- (-2.5,-.4);
		\draw[->,thick] (.5,1) -- (3.5,1);
		\draw[->,thick] (2.5,-.4) -- (3.5,-.4);
		\draw[->,thick] (6.5,1) -- (7.5,.6);
		\draw[->,thick] (-6,-.4) -- (-6,.4);
		\draw[->,thick] (-2,.6) -- (-.5,2.5);
		\draw[->,thick] (-.5,-.4) -- (-.5,.4);
		\draw[->,thick] (4.5,1) -- (5.5,1);
		\draw[->,thick] (-.5,-2.5) -- (-2,-.6);
		\draw[->,thick] (4,-.4) -- (4,.4);
		\draw[->,thick] (5.5,-1) -- (4.5,-1);
		\draw[->,thick] (8.5,.4) -- (9,.4) -- (9,-.4)--(8.5,-.4);
	
		\draw[->,thick,dashed] (7.5,-.6) -- (6.5,-1);
		\draw[->,thick,dashed] (5.5,-1.5) -- (.5,-3);
		\draw[->,thick,dashed] (3.5,-1) -- (.5,-1);
		\draw[->,thick,dashed] (-.5,-3) -- (-5.5,-1.5);
		\draw[->,thick,dashed] (3.5,.4) -- (2.5,.4);
		\draw[->,thick,dashed] (-.5,-1) -- (-3.5,-1);
		\draw[->,thick,dashed] (-2.5,.4) -- (-3.5,.4);
		\draw[->,thick,dashed] (-6.5,-1) -- (-7.5,-.6);
		\draw[->,thick,dashed] (6,.4) -- (6,-.4);
		\draw[->,thick,dashed] (2,-.6) -- (.5,-2.5);
		\draw[->,thick,dashed] (.5,.4) -- (.5,-.4);
		\draw[->,thick,dashed] (-4.5,-1) -- (-5.5,-1);
		\draw[->,thick,dashed] (.5,2.5) -- (2,.6);
		\draw[->,thick,dashed] (-4,.4) -- (-4,-.4);
		\draw[->,thick,dashed] (-5.5,1) -- (-4.5,1);
		\draw[->,thick,dashed] (-8.5,-.4) -- (-9,-.4) -- (-9,.4)--(-8.5,.4);
	
		\begin{scope}[yshift=8.5cm]
		\draw (-8,0) node [node] {000};
		\draw (-6,1) node [node] {X};
		\draw (-4,1) node [node] {010};
		\draw (0,3) node [node] {X};
		\draw (-4,-1) node [node] {X};
		\draw (0,1) node [node] {101};
		\draw (2,0) node [node] {0110};
		\draw (6,1) node [node] {111};
		\draw (-6,-1) node [node] {000};
		\draw (-2,0) node [node] {1001};
		\draw (0,-1) node [node] {010};
		\draw (4,1) node [node] {X};
		\draw (0,-3) node [node] {X};
		\draw (4,-1) node [node] {101};
		\draw (6,-1) node [node] {X};
		\draw (8,0) node [node] {111};
	
		\draw[->,thick] (-7.5,.6) -- (-6.5,1);
		\draw[->,thick] (-5.5,1.5) -- (-.5,3);
		\draw[->,thick] (-3.5,1) -- (-.5,1);
		\draw[->,thick] (.5,3) -- (5.5,1.5);
		\draw[->,thick] (-3.5,-.45) -- (-2.5,-.45);
		\draw[->,thick] (.5,1) -- (3.5,1);
		\draw[->,thick] (2.5,-.45) -- (3.5,-.45);
		\draw[->,thick] (6.5,1) -- (7.5,.6);
		\draw[->,thick] (-6,-.45) -- (-6,.45);
		\draw[->,thick] (-2,.6) -- (-.5,2.5);
		\draw[->,thick] (-.5,-.45) -- (-.5,.45);
		\draw[->,thick] (4.5,1) -- (5.5,1);
		\draw[->,thick] (-.5,-2.5) -- (-2,-.6);
		\draw[->,thick] (4,-.45) -- (4,.45);
		\draw[->,thick] (5.5,-1) -- (4.5,-1);
		\draw[->,thick] (8.5,.45) -- (9,.45) -- (9,-.45)--(8.5,-.45);
	
		\draw[->,thick,dashed] (7.5,-.6) -- (6.5,-1);
		\draw[->,thick,dashed] (5.5,-1.5) -- (.5,-3);
		\draw[->,thick,dashed] (3.5,-1) -- (.5,-1);
		\draw[->,thick,dashed] (-.5,-3) -- (-5.5,-1.5);
		\draw[->,thick,dashed] (3.5,.45) -- (2.5,.45);
		\draw[->,thick,dashed] (-.5,-1) -- (-3.5,-1);
		\draw[->,thick,dashed] (-2.5,.45) -- (-3.5,.45);
		\draw[->,thick,dashed] (-6.5,-1) -- (-7.5,-.6);
		\draw[->,thick,dashed] (6,.45) -- (6,-.45);
		\draw[->,thick,dashed] (2,-.6) -- (.5,-2.5);
		\draw[->,thick,dashed] (.5,.45) -- (.5,-.45);
		\draw[->,thick,dashed] (-4.5,-1) -- (-5.5,-1);
		\draw[->,thick,dashed] (.5,2.5) -- (2,.6);
		\draw[->,thick,dashed] (-4,.45) -- (-4,-.45);
		\draw[->,thick,dashed] (-5.5,1) -- (-4.5,1);
		\draw[->,thick,dashed] (-8.5,-.45) -- (-9,-.45) -- (-9,.45)--(-8.5,.45);
	
		\end{scope}
	
	\end{tikzpicture}
	\end{tabular}
	
	\caption{$Z_2$-instances on the 4-dimensional de Bruijn graph.}
	\label{deB}
	
	\begin{tablenotes}[flushleft]
	\item Left is the 4-dimensional de Bruijn graph; right is a graph indicating the minimal $Z_2$-instances encountered walking on the de Bruijn graph.
	\end{tablenotes}

\end{threeparttable}
\end{figure}
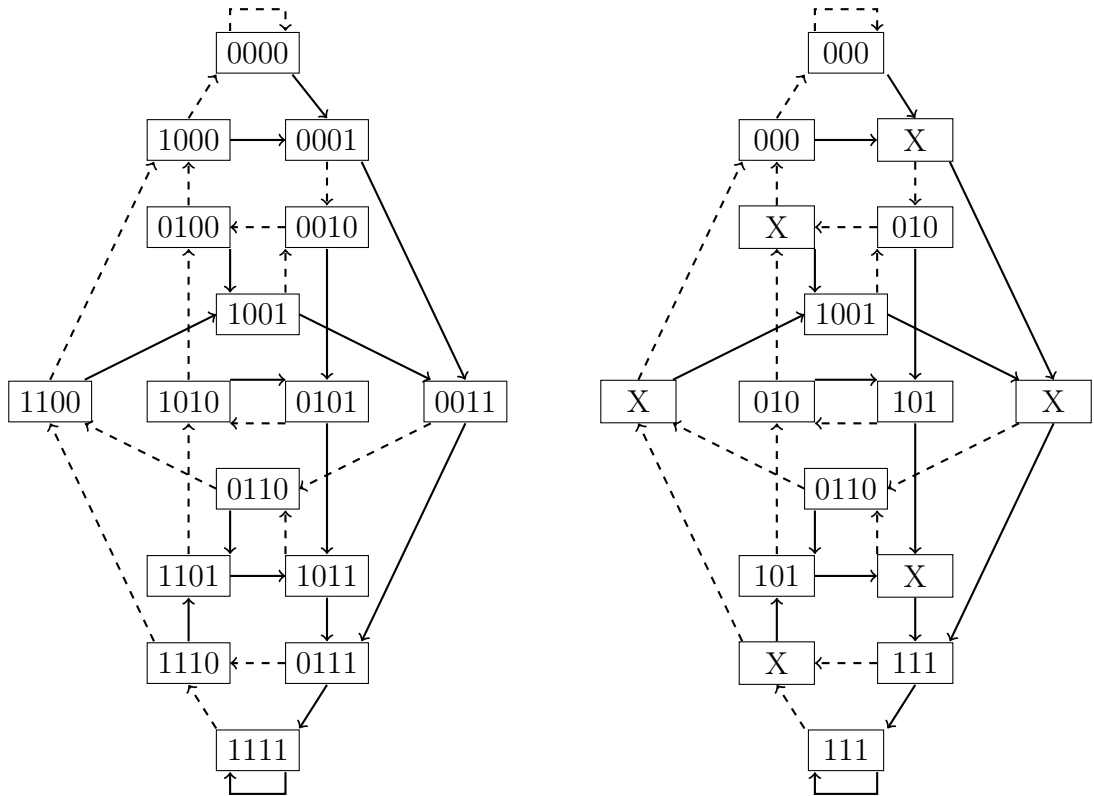

For a random walk of length $M$ on the de Bruin graph--so the corresponding word $W$ has length $(M+3)$--let $Q_n(M)$ be the number of times node $n$ showed up, which means $\sum_{n=0}^{15} Q_n(M) = M$. 
We can count the number of occurrences, $R_V(M)$, of each minimal $Z_2$-instances, $V$, in $W$ as follows. 
(To avoid any undercount, assume we do not start on a node beginning with a length-3 minimal $Z_2$-instance.)
\[\begin{array}{l l l}
	R_{000}(M) = Q_{0000}(M) + Q_{1000}(M); && R_{111}(M) = Q_{0111}(M) + Q_{1111}(M);\\
	R_{010}(M) = Q_{0010}(M) + Q_{1010}(M); && R_{101}(M) = Q_{0101}(M) + Q_{1101}(M);\\
	R_{0110}(M) = Q_{0110}(M); && R_{1001}(M) = Q_{1001}(M).
\end{array}\]
As $M \rightarrow \infty$, the density of $Z_3$-subwords is asymptotically at least
\[\frac{\sum_{V \in \mathbf{V}} \binom{R_V(M)}{2}}{\binom{M}{2}} \sim \frac{\sum_{V \in \mathbf{V}} R_V^2(M)}{M^2}.\]

One can assign probabilities to the outgoing edges of each nyble. 
Define probability tuple $p = \ang{p_n : n \in \{0,\ldots,15\}} \in [0,1]^{16}$ with $p_n$ being the probability that node $n$ is followed by a 1. 
Given an long random walk with fixed probabilities $p$, define $q = \ang{q_n : n \in \{0,\ldots,15\}} \in [0,1]^{16}$ where $q_n$ is the proportion of node-$n$ encounters in the walk. 
This leads to the following system of 17 equations with $k \in \{0,1,2,3,4,5,6,7\}$:
\begin{eqnarray*}
	q_{2k} & = & q_{k}(1-p_{k}) + q_{k+8}(1-p_{k+8}); \\
	q_{2k+1} & = & q_{k}p_{k} + q_{k+8}p_{k+8};\\
	1 & = & \sum_{i = 0}^{15} q_i.
\end{eqnarray*}

Further, define $r_V$ as $R_V(W)$ above, substituting $q_n$ for $Q_n(M)$. 
\[\begin{array}{l l l l l }
	r_{000} = q_0 + q_8; && r_{010} = q_2 + q_{10}; && r_{0110} = q_6; \\
	r_{111} = q_7 + q_{15}; && r_{101} = q_5 + q_{13}; && r_{1001} = q_9.
\end{array}\]

Then the expected $Z_3$-density is asymptotically at least $d = \sum_{V\in \mathbf{V}} r_V^2$. 
By solving the above system of 17 equations for the $q_n$ in terms of the $p_n$, rewrite $d$ in terms of the probabilities. 
Minimizing $d$ over the 16-dimensional unit cube--each probability is in $[0,1]$--should give a lower bound for $\underline{\delta}(Z_3,2)$.
We need only to show that for every limit density of $\delta(Z_3,2)$, or at least for the liminf-density, there exists an associated set of probabilities for the de Bruijn graph.

Using the function sage.numerical.optimize.minimize\_constrained() in Sage \parencite{S-14}, one can obtain probabilities producing a lower bound for $Z_3$-density that is slightly larger than 1/28. 
From these approximate results, we have identified the following distinct probability edge-assignments which each give a density of exactly 1/28.
For two of these, we also have associated families of words which exhibit the given probabilities as $n$ grows.
(`--' denotes that a node does not appear a positive proportion of the time, so its probability is irrelevant).
\begin{eqnarray*}
	p^{(1)} & = & (-, 4/5, 0, 3/5, 2/5, -, 1/5, 0, 1, 4/5,-, 3/5, 2/5, 1, 1/5, -);\\
	p^{(2)} & = & (-, 1, 0, 3/4, 1,-, 1/2, 0, 1, 1/2, -, 0, 1/4, 1, 0, -),\\
	& & W^{(2)}_n = (0001110010011100011011000111)^n;\\
	p^{(3)} & = & (-, 1, -, 3/5, 2/5, -, 1/5, 0, 1, 1, 0, -, 2/5, 0, 1/5, -),\\
	& & W^{(3)}_n = ((11010001)^3(101001)^2(110001)^{12}(1001)^8)^n.
\end{eqnarray*}

\begin{conj}
	 $\underline{\delta}(Z_3,2) > \frac{1}{28}.$
\end{conj}

The conjecture is with a strict inequality, as we can presumably increase the lower bound by using a larger set of $Z_2$-instances.
For example, the set of all $Z_2$-bifix-free $Z_2$-instances of length at most $5$ is 
\[ \{000,010,101,111,0110,1001,01001,01101,10010,10110 \}.\]
We would then view words as walks on the 5-dimensional de Bruijn graph and minimize the associated expression in $2^5 = 32$ variables.

\chapter{Density Dichotomy in Random Words} \label{DICHOT}

Definition~\ref{defn:I} is contained within Definition~\ref{defn:density2} below for completeness within this chapter.

\begin{defn} \label{defn:density2}
	Fixed $n$ and select $W_n \in [q]^n$ uniformly at random.
	The \emph{expected density} of $V$ is 
	\[ \delta_n(V,q) = \EE(\delta(V,W_n)). \]
	The \emph{asympototic expected density} of $V$ is 
	\[ \delta(V,q) = \lim_{n \rightarrow \infty} \delta_n(V,q). \]

	The set of $V$-instances in $\Sigma^n$ is $\Inst_n(V,\Sigma)$.
	The probability that a random length-$n$ $q$-ary word is a $V$-instance is
	\[ \II_n(V,q) = \frac{\big|\Inst_n(V,[q])\big|}{q^n}. \]
	The \emph{asymptotic instance probability} of $V$ is
	\[ \II(V,q) = \lim_{n\rightarrow \infty} \II_n(V,q). \]
\end{defn}

Sometimes we will count homomorphisms to attain density upper bounds.
\begin{defn} \label{defn:hom}
	Fix alphabets $\Gamma$ and $\Sigma$ and assume $V \preceq W$. 
	An \emph{encounter of $V$}, or \emph{$V$-encounter}, in $W$ is an ordered triple $(a,b,\phi)$ where $W[a,b] = \phi(V)$ for nonerasing homomorphism $\phi:\Gamma^* \rightarrow \Sigma^*$. 
	When $\Gamma = \L(V)$ and $W \in  \Sigma^*$, denote with $\hom(V,W)$ the \emph{number of $V$-encounters} in $W$. 
(Note that the conditions on $\Gamma$ and $\Sigma$ are necessary for $\hom(V,W)$ to not be trivially 0 or $\infty$.) 
	For $W_n \in [q]^n$ chosen uniformly at random, the \emph{expected number of $V$-encounters} is
	\[ \hom_n(V,q) = \EE(\hom(V,W_n)). \]
\begin{ex}
$\hom(ab,cde) = 4$ since $cde[0,2]$ is an instance of $ab$ by one homomorphism $\{a,b\}^* \rightarrow \{c,d,e\}^*$, $cde[1,3]$ is an instances of $ab$ by one homomorphism, and $cde[0,3]$ is an instance of $ab$ by two homomorphisms.

In fact, for $q \in \Z^+$, $\hom_3(ab,q) = 4$, since $\hom(ab,W) = 4$ for all $W \in [q]^3$.
\end{ex}
\end{defn}

\section{The Dichotomy}

\begin{thm} [\cite{CR-15}, Theorem 2.1] \label{dichotomy}
	Let $V$ be a word on any alphabet.
	Fix integer $q \geq 2$.
	The following are equivalent:
	\begin{enumerate}
		\renewcommand{\theenumi}{(\roman{enumi})}
		\item $V$ is doubled (that is, every letter in $V$ appears at least twice);
		\item $\delta(V,q) = 0$.
	\end{enumerate}
\end{thm}

\begin{proof}
	First we prove $(i) \Longrightarrow (ii)$. 
	Let $W_n \in [q]^n$ be chosen uniformly at random. 
	Note that in $W_n$, there are in expectation the same number of encounters of $V$ as there are of any anagram of $V$. 
	Indeed, if $V'$ is an anagram of $V$ and $\phi$ is a nonerasing homomorphism, then $|\phi(V')| = |\phi(V)|$.
	\begin{fact} [\cite{CR-15}, Fact 2.2] \label{anagram}
		If $V'$ is an anagram of $V$, then 
		\[ \hom_n(V,q)=\hom_n(V',q) . \]
	\end{fact}
	
	Assume $V$ is doubled and let $\Gamma = \L(V)$ and $k = |\Gamma|$.
	Given Fact~\ref{anagram}, we consider an anagram $V' = XY$ of $V$, where $|X| = k$ and $\Gamma = \L(X) = \L(Y)$.
	That is, $X$ comprises one copy of each letters in $\Gamma$ and all the duplicate letters of $V$ are in $Y$.
	
	We obtain an upper bound for the average density of $V$ by estimating $\hom_n(V',q)$. 
	To do so, sum over starting position $i$ and length $j$ of encounters of $X$ in $W_n$ that might extend to an encounter of $V'$.
	There are $\binom{j+1}{k+1}$ homomorphisms $\phi$ that map $X$ to $W[i,i+j]$ and the probability that $W_n[i+j,i+j+|\phi(Y)|] = \phi(Y)$ is at most $q^{-j}$.
	Also, the series $\sum_{j = k}^{\infty} \binom{j+1}{k+1} q^{-j}$ converges (try the ratio test) to some $c$ not dependent on $n$.
	\begin{eqnarray*}
		\delta_n(V,q) &\leq& \frac{1}{\binom{n+1}{2}} \hom_n(V',q) \\
		&<& \frac{1}{\binom{n+1}{2}} \sum_{i = 0}^{n - |V|} \sum_{j = k}^{n - i} \binom{j+1}{k+1} q^{-j} \\
		&<& \frac{1}{\binom{n+1}{2}} \sum_{i = 0}^{n - |V|} c \\
		&=& \frac{c(n-|V|+1)}{\binom{n+1}{2}} \\
		&=& O(n^{-1}),
	\end{eqnarray*}
	
	We prove $(ii) \Longleftarrow (i)$ by contraposition. Assume there is a letter $x$ that occurs exactly once in $V$.
	Write $V = TxU$ where $\L(V) \setminus \L(TU) = \{x\}$.
	We obtain a lower bound for $\delta_n(V,q) = \EE(\delta(V,W_n))$ by only counting encounters with $|\phi(TU)| = |TU|$.
	Note that each such encounter is unique to its instance, preventing double-counting.
	For this undercount, we sum over encounters with $W_n[i,i+j]=\phi(x)$.
	\begin{eqnarray*}
		\delta_n(V,q) &=& \delta_n(TxU,q) \\
		&\geq&\frac{1}{\binom{n+1}{2}} \sum_{i = |T|}^{n-|U|-1} \sum_{j = 1}^{i-|T|} q^{-||TU||} \\
		&=&q^{-||TU||}\frac{1}{\binom{n+1}{2}} \sum_{i = |T|}^{n-|U|-1} (i - |T|) \\
		&=&q^{-||TU||}\frac{\binom{n-|UT|}{2}}{\binom{n+1}{2}}  \\
		&\sim&q^{-||TU||}\\
		&>&0.
	\end{eqnarray*}
\end{proof}

It behooves us now to develop more precise theory for these two classes of words: doubled and nondoubled.
Lemma~\ref{base} below both helps develop that theory and gives insight into the detrimental effect that letter repetition has on encounter frequency.

\begin{prn} [\cite{CR-15}, Proposition 2.3] \label{CRT}
	For $k \in \Z^+$, $\overline{r} = \{r_1, \ldots, r_k\} \in (\Z^+)^k$, and $d = \gcd_{i \in [k]}(r_i)$, there exists integer $N = N_{\overline{r}}$ such that for every $n>N$ there exist coefficients $a_1, \cdots, a_k \in \Z^+$ such that $dn = \sum_{i=1}^{k} a_ir_i$ and $a_i \leq N$ for $i \geq 2$.
\end{prn}

\begin{proof}
	For each $j \in [r_1/d]$, find integer coefficients $b_i^{(j)}$ so that $jd$ is a linear combination of the $r_i$: $jd = \sum_{i=1}^{k} b_i^{(j)}r_i$. 
	Let $m = 1+ \left|\min\left(b_i^{(j)}\right)\right|$, the minimum taken over all $i$ and $j$. 
	Define $a_i^{(j)} = b_i^{(j)} + m>0$ and $R = \sum_{i=1}^{k} r_i$.
	Now for each $j$, \[\sum_{i=1}^{k} a_i^{(j)}r_i = \sum_{i=1}^{k} b_i^{(j)}r_i + \sum_{i=1}^{k} mr_i = jd + mR.\]

	Set $N = r_1+mR$. 
	For $n>N$, identify $j_n \in [r_1/d]$ such that 
	\[ dn \equiv j_nd+mR \mod{r_1} . \]
	Then $a_i = a_i^{(j_n)}$ for $i>1$ and $a_1 = \frac{1}{r_1}\left(dn - \sum_{i = 2}^{k} a_ir_i \right)$.
\end{proof}

\begin{lem} [\cite{CR-15}, Lemma 2.4] \label{base}
	For any word $V$, Let $\Gamma = \L(V) = \{x_1, \ldots, x_k\}$ where $x_i$ has multiplicity $r_i$ for each $i \in [k]$. 
	Let $U$ be $V$ with all letters of multiplicity $r= \min_{i \in [k]}(r_i)$ removed. 
	Finally, let $\Sigma$ be any finite alphabet with $|\Sigma| = q\geq 2$ letters. 
	Then for a uniformly randomly chosen $V$-instance $W \in \Sigma^{dn}$, where $d = \gcd_{i \in [k]}(r_i)$, there is asymptotically almost surely a homomorphism $\phi: \Gamma^* \rightarrow \Sigma^*$ with $\phi(V) = W$ and $|\phi(U)| < \sqrt{dn}$.
\end{lem}

\begin{proof}
	Let $a_n$ be the number of $V$-instances in $\Sigma^n$ and $b_n$ be the number of homomorphisms $\phi : \Gamma^* \rightarrow \Sigma^*$ such that $|\phi(V)| =  n$.
	Let $b_n^1$ be the number of these $\phi$ such that $\phi(U) < \sqrt{n}$ and $b_n^2$ the number of all other $\phi$ so that $b_n = b_n^1 + b_n^2$.
	Similarly, let $a_n^1$ be the number of $V$-instances in $\Sigma^n$ for which there exists a $\phi$ counted by $b_n^1$ and $a_n^2$ the number of instances with no such $\phi$, so $a_n = a_n^1 + a_n^2$.
	Observe that $a_n^2 \leq b_n^2$.

	Without loss of generality, assume $r_1 = r$ (rearrange the $x_i$ if not). 
	We now utilize $N = N_{\overline{r}}$ from Proposition~\ref{CRT}. 
	For sufficiently large $n$, we can undercount $a_{dn}^1$ by counting homomorphisms $\phi$ with $|\phi(x_i)| = a_i$ for the $a_i$ attained from Proposition~\ref{CRT}. 
	Indeed, distinct homomorphisms with the same image-length for every letter in $V$ produce distinct $V$-instances.
	Hence 
	\begin{eqnarray*}
		a_{dn}^1 & \geq & q^{\sum_{i=1}^k a_i} \\
		& \geq & q^{\left(\frac{dn - (k-1)N}{r} + r(k-1)\right)} \\
		& = & cq^{\left(\frac{dn}{r}\right)},
	\end{eqnarray*}
	where $c = q^{(k-1)(r^2-N)/r}$ depends on $V$ but not on $n$.	
	To overcount $b_n^2$ (and $a_{dn}^2$ by extension), we consider all $\binom{n+1}{|V|+1}$ ways to partition an $n$-letter length and so determine the lengths of the images of the letters in $V$. 
	However, for letters with multiplicity strictly greater than $r$, the sum of the lengths of their images must be at least $\sqrt{n}$.
	\begin{eqnarray*}
		b_n^2 & \leq & \binom{n+1}{|V|+1}  \sum_{i = \ceil{\sqrt{n}}}^n q^{\left(\frac{n - i}{r} + \frac{i}{r+1}\right)}\\
		& =  & \binom{n+1}{|V|+1} \sum_{i = \ceil{\sqrt{n}}}^n q^{\left(\frac{n}{r} - \frac{i}{r(r+1)}\right)}\\
		& < & n^{|V|+2}  q^{\left(\frac{n}{r} - \frac{\sqrt{n}}{r(r+1)}\right)}\\
		& = & q^{\frac{n}{r}}o(1).\\
		a_{dn}^2 & \leq & b_{dn}^2\\
		& = & o(a_{dn}^1).
	\end{eqnarray*}
	
	That is, the proportion of $V$-instances of length $dn$ that cannot be expressed with $|\phi(U)| < \sqrt{dn}$ diminishes to 0 as $n$ grows.
\end{proof}
 
\section{Density of Nondoubled Words}

In Theorem~\ref{dichotomy}, we show that the density of nondoubled $V$ in long random words (over a fixed alphabet with at least two letters) does not approach 0. 
The natural follow-up question is: Does the density converge?
To answer this question, we first prove the following lemma. 
Fixing $V = TxU$ where $x$ is a nonrecurring letter in $V$, the lemma tells us that all but a diminishing proportion of $V$-instances can be obtained by some $\phi$ with $|\phi(TU)|$ negligible.

\begin{lem} [\cite{CR-15}, Lemma 3.1] \label{bulk}
	Let $V = U_0x_1U_1x_2\cdots x_rU_r$ with $r\geq 1$, where $U = U_0U_1\cdots U_r$ is doubled with $k$ distinct letters (though any particular $U_j$ may be the empty word), the $x_i$ are distinct, and no $x_i$ occurs in $U$. 
	Further, let $\Gamma$ be the $(k+r)$-letter alphabet of $V$ and let $\Sigma$ be any finite alphabet with $q\geq 2$ letters. 
	Then there exists a nondecreasing function $g(n) = o(n)$ such that, for a randomly chosen $V$-instance $W \in \Sigma^n$, there is asymptotically almost surely a homomorphism $\phi: \Gamma^* \rightarrow \Sigma^*$ with $\phi(V) = W$ and $|\phi(x_r)| > n - g(n)$.
\end{lem}

\begin{proof}
	Let $X_i = x_1x_2\cdots x_i$ for $0 \leq i \leq r$ (so $X_0 = \varepsilon$). 
	For any word $W$, let $\Phi_W$ be the set of homomorphisms $\{\phi:  \Gamma^* \rightarrow \Sigma^* \mid \phi(V)=W\}$ that map $V$ onto $W$. 
	Define $\textbf{P}_i$ to be the following proposition for $i \in [r]$:
	\begin{quotation}
		\noindent There exists a nondecreasing function $f_i(n) = o(n)$ such that, for a randomly chosen $V$-instance $W \in \Sigma^n$, there is asymptotically almost surely a homomorphism $\phi \in \Phi_W$ such that $|\phi(UX_{i-1})| \leq f_i(n)$.
	\end{quotation}
	
	The conclusion of this lemma is an immediate consequence of proposition $\textbf{P}_r$, with $g(n) = f_r(n)$, which we will prove by induction.
	Lemma~\ref{base} provides the base case, with $r = 1$ and $f_1(n) = \sqrt{n}$.
	
	Let us prove the inductive step: $\textbf{P}_i$ implies $\textbf{P}_{i+1}$ for $i \in [r-1]$.	
	Roughly speaking, this says: If most instances of $V$ can be made with a homomorphism $\phi$ where $|\phi(UX_{i-1})|$ is negligible, then most instances of $V$ can be made with a homomorphism $\phi$ where $|\phi(UX_{i})|$ is negligible.
	
	Assume $\textbf{P}_{i}$ for some $i \in [r-1]$, and set $f(n) = f_{i}(n)$. 
	Let $A_n$ be the set of $V$-instances in $\Sigma^n$ such that $|\phi(UX_{i-1})| \leq f(n)$ for some $\phi \in \Phi_W$. 
	Let $B_n$ be the set of all other $V$-instances in $\Sigma^n$. 
$\textbf{P}_{i}$ implies $|B_n| = o(|A_n|)$.
	
	Case 1: $U_{i} = \varepsilon$, so $x_{i}$ and $x_{i+1}$ are consecutive in $V$. 
	When $|\phi(UX_{i-1})| \leq f(n)$, we can define $\psi$ so that $\psi(x_{i}x_{i+1}) = \phi(x_{i}x_{i+1})$ and $|\psi(x_{i})|=1$; otherwise, let $\psi(y) = \phi(y)$ for $y \in\Gamma \setminus \{x_i,x_{i+1}\}$.
	Then $|\phi(UX_i)| \leq f(n)+1$ and $\textbf{P}_{i+1}$ with $f_{i+1}(n) = f_i(n) + 1$.

	Case 2: $U_i \neq \varepsilon$, so $|U_i| > 0$.
	Let $g(n)$ be some nondecreasing function such that $f(n) = o(g(n))$ and $g(n) = o(n)$. 
(This will be the $f_{i+1}$ for $\textbf{P}_{i+1}$.)
	Let $A_n^\alpha$ consist of $W \in A_n$ such that $|\phi(UX_{i})| \leq g(n)$ for some $\phi\in \Phi_W$. Let $A_n^\beta = A_n \setminus A_n^\alpha$. 
The objective henceforth is to show that $|A_n^\beta| = o(|A_n^\alpha|)$.
	
	For $Y \in A_n^\beta$, let $\Phi_Y^\beta$ be the set of homomorphisms $\{\phi \in \Phi_Y : |\phi(UX_{i-1})| \leq f(n)\}$ that disqualify $Y$ from being in $B_n$. 
	Hence $Y \in A_n$ implies $\Phi_Y^\beta \neq \emptyset$.
	Since $Y \not \in A_n^\alpha$, $\phi \in \Phi_Y^\beta$ implies $|\phi(UX_{i})| > g(n)$, so $|\phi(x_i)|> g(n)-f(n)$.
	Pick $\phi_Y \in \Phi_Y^\beta$ as follows:
	\begin{itemize}
		\item Primarily, minimize $|\phi(U_0x_1U_1x_2 \cdots U_{i-1}x_{i})|$;
		\item Secondarily, minimize $|\phi(U_i)|$;
		\item Tertiarily, minimize $|\phi(U_0x_1U_1x_2 \cdots U_{i-1})|$.
	\end{itemize}
	
	Roughly speaking, we have chosen $\phi_Y$ to move the image of $U_i$ as far left as possible in $Y$.
	But since $Y \not\in A_n^\alpha$, we want it further left!
	
	To suppress the details we no longer need, let $Y = Y_1\phi_Y(x_i) \phi_Y(U_i) \phi_Y(x_{i+1}) Y_2$, where $Y_1 = \phi_Y(U_0x_1U_1x_2 \cdots U_{i-1})$ and $Y_2 = \phi_Y(U_{i+1}x_{i+2} \cdots U_r)$. 
	
	Consider a word $Z \in \Gamma^n$ of the form $Y_1Z_1 \phi_Y(U_i) Z_2 \phi_Y(U_i) \phi_Y(x_{i+1}) Y_2$, where $Z_1$ is an initial string of $\phi_Y(x_{i})$ with $2f(n) \leq |Z_1| < g(n) - 2f(n)$ and $Z_2$ is a final string of $\phi_Y(x_{i})$. 
	(See Figure~\ref{fig:YtoZ}.)
	In a sense, the image of $x_i$ was too long, so we replace a leftward substring with a copy of the image of $U_i$.
	Let $C_Y$ be the set of all such $Z$ with $|Z_1|$ a multiple of $f(n)$.
	For every $Z \in C_Y$ we can see that $Z \in A_n^\alpha$, by defining $\psi \in \Phi_Z$ as follows:
	\[\psi(y) = \left\{ \begin{array}{l l}
		Z_1 & \mbox{ if } y = x_{i}; \\
		Z_2 \phi_Y(U_i)\phi_Y(x_{i+1})& \mbox{ if } y = x_{i+1}; \\
		\phi_Y(y) & \mbox{ otherwise.}
	\end{array} \right.\]
	
\begin{figure}[ht]
\centering
\begin{threeparttable}

	\begin{tabular}{c}
			\begin{tikzpicture}[scale=.5]
				\filldraw[fill=gray!20,draw=white](6,-1) rectangle (9,1);
				\draw(0,1)--(27,1)--(27,-1)--(0,-1)--(0,1);
				\draw(0,0)--(27,0);
				\draw(-1,.5) node {$Y=$};
				\draw(-1,-.5) node {$Z=$};
				\draw(4,1)--(4,-1.5);
				\draw(2,.5) node {$Y_1$};
				\draw(2,-.5) node {$Y_1$};
				\draw(14,1)--(14,-1);
				\draw(6,0)--(6,-1.5);
				\draw(9,0)--(9,-1.5);
				\draw(9,.5) node {$\phi_Y(x_i)$};
				\draw(5,-.5) node {$Z_1$};
				\draw(5,-1.5) node {$\psi(x_i)$};
				\draw(7.5,-.5) node {$\phi_Y(U_i)$};
				\draw(11.5,-.5) node {$Z_2$};
				\draw(17,1)--(17,-1);
				\draw(15.5,.5) node {$\phi_Y(U_i)$};
				\draw(15.5,-.5) node {$\phi_Y(U_i)$};
				\draw(23,1)--(23,-1.5);
				\draw(20,.5) node {$\phi_Y(x_{i+1})$};
				\draw(20,-.5) node {$\phi_Y(x_{i+1})$};
				\draw(14.5,-1.5) node {$\psi(x_{i+1})$};
				\draw(25,.5) node {$Y_2$};
				\draw(25,-.5) node {$Y_2$};
			\end{tikzpicture}		
	\end{tabular}

	\caption{Replacing a section of $\phi_Y(x_i)$ in $Y$ to create $Z$.}  \label{fig:YtoZ}

\end{threeparttable}
\end{figure}
	
	
	\medskip
	Claim 1: $\displaystyle \liminf_{|Y| = n\rightarrow \infty} |C_Y| = \infty$.
	
	\medskip
	Since we want $2f(n) \leq |Z_1| < g(n) - 2f(n)$, and $g(n) - 2f(n) < |\phi_Y(x_i)| - |\phi_Y(U_i)|$, there are $g(n) - 4f(n)$ places to put the copy of $\phi_Y(U_i)$.
	To avoid any double-counting that might occur when some $Z$ and $Z'$ have their new copies of $\phi_Y(U_i)$ in overlapping locations, we further required that $f(n)$ divide $|Z_1|$. This produces the following lower bound: \[|C_Y| \geq \floor{\frac{g(n) - 4f(n)}{f(n)}} \rightarrow \infty.\]
	
	\medskip
	Claim 2: For distinct $Y, Y' \in A_n^\beta$, $C_Y \cap C_{Y'} = \emptyset$.
	
	\medskip
	To prove Claim 2, take $Y,Y' \in A_n^\beta$ with $Z \in C_Y \cap C_{Y'}$. Define $Y_1$, $Y_2$, $Y'_1$, and $Y'_2$ as above:
	\[ \begin{array}{l l l}
		Y_1 = \phi_Y(U_0x_1U_1x_2 \cdots U_{i-1}), & & Y_2 = \phi_Y(U_{i+1}x_{i+2} \cdots U_r);\\
		Y'_1 = \phi_{Y'}(U_0x_1U_1x_2 \cdots U_{i-1}), & & Y'_2 = \phi_{Y'}(U_{i+1}x_{i+2} \cdots U_r). 
	\end{array} \] 
	Now for some $Z_1,Z'_1,Z_2,Z'_2$,
	\[Y_1Z_1 \phi_Y(U_i) Z_2 \phi_Y(U_i) \phi_Y(x_{i+1}) Y_2 = Z = Y'_1Z'_1 \phi_{Y'}(U_i) Z'_2 \phi_{Y'}(U_i) \phi_{Y'}(x_{i+1}) Y'_2,\]
	with the following constraints:
	\begin{enumerate}[(i)]
		\item $|Y_1\phi_Y(U_i)| \leq |\phi_Y(UX_{i})| \leq f(n)$;
		\item $|Y'_1\phi_{Y'}(U_i)| \leq |\phi_{Y'}(UX_{i})| \leq f(n)$;
		\item $2f(n) \leq |Z_1| < g(n) - 2f(n)$;
		\item $2f(n) \leq |Z'_1| < g(n) - 2f(n)$;
		\item $|Z_1 \phi_Y(U_i) Z_2| = |\phi_{Y}(x_{i})| > g(n) - f(n)$;
		\item $|Z'_1 \phi_{Y'}(U_i) Z'_2| = |\phi_{Y'}(x_{i})| > g(n) - f(n)$.
	\end{enumerate}
	As a consequence:
	\begin{itemize}
		\item $|Y_1Z_1 \phi_Y(U_i)| < g(n) - f(n) < |Z'_1 \phi_{Y'}(U_i) Z'_2|$, by (i), (iii), and (vi);
		\item $|Y_1Z_1| \geq |Z_1| > 2f(n) > |Y'_1|$, by (iii) and (ii).
	\end{itemize}
	
	Therefore, the copy of $\phi_Y(U_i)$ added to $Z$ is properly within the noted occurrence of $Z'_1 \phi_{Y'}(U_i) Z'_2$ in $Z'$, which is in the place of $\phi_{Y'}(x_{i})$ in $Y'$.
	In particular, the added copy of $\phi_Y(U_i)$ in $Z$ interferes with neither $Y_1'$ nor the original copy of $\phi_{Y'}(U_i)$.
	Thus $Y_1'$ is an initial substring of $Y$ and $\phi_{Y'}(U_i) \phi_{Y'}(x_{i+1}) Y_2'$ is a final substring of $Y$.
	Likewise, $Y_1$ is an initial substring of $Y'$ and $\phi_Y(U_i) \phi_Y(x_{i+1}) Y_2$ is a final substring of $Y'$.
	By the selection process of $\phi_Y$ and $\phi_{Y'}$, we know that $Y_1 = Y'_1$ and 
\[ \phi_Y(U_i) \phi_Y(x_{i+1}) Y_2 = \phi_{Y'}(U_i) \phi_{Y'}(x_{i+1}) Y_2' . \]
	Finally, since $f(n)$ divides $Z_1$ and $Z_1'$, we deduce that $Z_1 = Z_1'$. 
	Otherwise, the added copies of $\phi_Y(U_i)$ in $Z$ and of $\phi_{Y'}(U_i)$ in $Z'$ would not overlap, resulting in a contradiction to the selection of $\phi_Y$ and $\phi_{Y'}$.
	Therefore, $Y = Y'$, concluding the proof of Claim 2.
	
	
	Now $C_Y \subset A_n^\alpha$ for $Y \in A_n^\beta$. 
	Claims 1 and 2 together imply that $|A_n^\beta| = o(|A_n^\alpha|)$.

\end{proof}

Observe that the choice of $\sqrt{n}$ in Lemma~\ref{base} was arbitrary. 
The proof works for any function $f(n) = o(n)$ with $f(n) \rightarrow \infty$.
Therefore, where Lemma~\ref{bulk} claims the existence of some $g(n) \rightarrow \infty$, the statement is in fact true for all $g(n) \rightarrow \infty$.

Let $\II_n(V,q)$ be defined as \[\II_n(V,q) =\frac{|\{W \in [q]^n \mid \phi(V)=W \mbox{ for some homomorphism } \phi:\L(V)^* \rightarrow [q]^*\}|}{q^{n}}.\]
Note that $\II_n(V,q)$ is equivalently defined as the probability that a uniformly randomly selected length-$n$ word over a fixed $q$-letter alphabet is an instance of $V$.
Indeed, by the nature of the instance relation, only the cardinality of the alphabet matters.

\begin{defn} \label{defn:sur}
	$\delta_{sur}(V,W)$ (with \emph{sur} for surjection) is the number of factors of $W$ that are instances of $V$ via a function $\phi$ with $\phi(V)=W$, divided by the total possible such factors (1). 
	More directly, $\delta_{sur}(V,W)$ is the characteristic function for the event that $W$ is an instance of $V$. 
\end{defn}

\begin{fact} [\cite{CR-15}, Fact 3.2] \label{EI}
	For any $V$ and $q$ and for $W_n \in [q]^n$ chosen uniformly at random, 
	\begin{eqnarray*}
		\binom{n+1}{2}\EE(\delta(V,W_n)) & = & \sum_{m=1}^n (n+1-m)\EE(\delta_{sur}(V,W_m))\\
		& = & \sum_{m=1}^n (n+1-m)\II_m(V,q).
	\end{eqnarray*}
\end{fact}

Set $\II(V,q) = \lim_{n\rightarrow \infty}\II_n(V,q)$. When does this limit exist?

\begin{thm} [\cite{CR-15}, Theorem 3.3] \label{nondoubledProb}
	For nondoubled $V$ and integer $q\in \Z^+$, $\II(V,q)$ exists. Moreover, $\II(V,q) \geq q^{-||V||} > 0$.
\end{thm}

\begin{proof}
	If $q=1$, then $\II_n(V,q)=1$ for $n \geq |V|$.
	
	Assume $q \geq 2$.
	Let $V = TxU$ where $x$ is the right-most nonrecurring letter in $V$. Let $\Gamma = \L(V)$ be the alphabet of letters in $V$. 
	By Lemma~\ref{bulk}, there is a nondecreasing function $g(n) = o(n)$ such that, for a randomly chosen $V$-instance $W \in [q]^n$, there is asymptotically almost surely a homomorphism $\phi: \Gamma^* \rightarrow [q]^*$ with $\phi(V) = W$ and $|\phi(x_r)| > n - g(n)$.
	
	Let $a_n$ be the number of $W \in [q]^n$ such that there exists $\phi: \Gamma^* \rightarrow [q]^*$ with $\phi(V) = W$ and $|\phi(x_r)| > n - g(n)$.
	Lemma~\ref{bulk} tells us that $\frac{a_n}{q^n} \sim \II_n(V,q)$.
	Note that $\frac{a_n}{q^n}$ is bounded. 
	It suffices to show that $a_{n+1} \geq qa_n$ for sufficiently large $n$.
	Pick $n$ so that $g(n) < \frac{n}{3}$.

	For length-$n$ $V$-instance $W$ counted by $a_n$, let $\phi_W$ be a homomorphism that maximizing $|\phi_W(x_r)|$ and, of such, minimizes $|\phi_W(T)|$.
	For each $\phi_W$ and each $a \in [q]$, let $\phi_W^a$ be the function such that, if $\phi_W(x_r) = AB$ with $|A|  = \floor{|\phi_W(x_r)|/2}$, then $\phi_W^a(x) = AaB$; $\phi_W^a(y) = \phi_W(y)$ for each $y \in  \Gamma \setminus\{x\}$
	Roughly speaking, we are sticking $a$ into the middle of the image of $x$.
	
	Suppose we are double-counting, so $\phi_W^a(V) = \phi_Y^b(V)$.
	As \[|\phi_W(x_r)|/2 > (n - g(n))/2 > n/3 > g(n) \geq |\phi_Y(TU)|\] and vice-versa, the inserted $a$ (resp., $b$) of one map does not appear in the image of $TU$ under the other map. 
	So $\phi_W(T)$ is an initial string and $\phi_W(U)$ a final string of $\phi_Y(V)$, and vice-versa.
	By the selection criteria of $\phi_W$ and $\phi_Y$, $|\phi_W(T)| = |\phi_Y(T)|$ and $|\phi_W(U)| = |\phi_Y(U)|$. 
	Therefore the location of the added $a$ in $\phi_W^a(V)$ and the added $b$ in $\phi_W^b(V)$ are the same.
	Hence, $a = b$ and $W = Y$.

	Moreover $\II(V,q) \geq q^{-||V||} > 0$.
\end{proof}

Having established that $\II(V,q)$ exists for all $V$ and $q$, we explore the limit value in Chapter~\ref{ASYMP}.
    
\begin{cor} [\cite{CR-15}, Corollary 3.6] \label{cor:EdI}
	Let $V$ be a nondoubled word on any alphabet. 
	Fix an integer $q>0$, and let $W_n \in [q]^n$ be chosen uniformly at random. 
	Then \[\lim_{n\rightarrow \infty}\EE(\delta(V,W_n)) = \II(V,q).\]
\end{cor}

\begin{proof}
	Let $\II = \II(V,q)$ and $\epsilon > 0$. 
	Pick $N = N_\epsilon$ sufficiently large so $|\II - \II_n(V,q)| < \frac{\epsilon}{2}$ when $n > N$. 
	Applying Fact~\ref{EI} for $n > \max(N,4N/\epsilon)$,
	\begin{eqnarray*}
		|\II - \EE(\delta(V,W_n))|& = & \left|\II\frac{1}{\binom{n+1}{2}}  \sum_{m=1}^n (n+1-m) - \frac{1}{\binom{n+1}{2}}  \sum_{m=1}^n (n+1-m)\II_m(V,q)\right|\\
		& \leq & \frac{1}{\binom{n+1}{2}}  \sum_{m=1}^n (n+1-m)|\II - \II_m(V,q)|\\
		& = & \frac{1}{\binom{n+1}{2}}  \left[\sum_{m=1}^N + \sum_{m = N+1}^n\right] (n+1-m)|\II - \II_m(V,q)|\\
		& < & \frac{1}{\binom{n+1}{2}}  \left[\sum_{m=1}^{\floor{\epsilon n / 4}} (n+1-m)1 + \sum_{m = N+1}^n (n+1-m)\frac{\epsilon}{2} \right]\\
		& < & \frac{1}{\binom{n+1}{2}}  \left[\frac{\epsilon n}{4}n + \binom{n+1}{2}\frac{\epsilon}{2} \right]\\
		& < & \epsilon.
	\end{eqnarray*}
\end{proof}

If there are multiple nonrecurring letters in $V$, then most long $V$-instances are liable to have numerous homomorphisms.
However, if there is exactly one recurring letter in $V$, Theorem~\ref{IV} below provides an upper bound for $\II(V,q)$ that, as $q \rightarrow \infty$, approaches the lower bound from Theorem~\ref{nondoubledProb} above.

\begin{lem} \label{homV}
	Let $V$ be a word with $\L(V) = \{x_0,x_1,\cdots,x_n\}$, $|\L(V)| = n+1$, where $x_0$ occurs $r_0=1$ time in $V$ and $x_k$ occurs $r_k>1$ times in $V$ for each $k \in [n]$.
	For $q,M \in \Z^+$, and $W_M \in [q]^M$ chosen uniformly at random,
	\[ 
		\EE(\hom(V,W_M)) = \sum_{\substack{\ang{i_0,\ldots,i_n} \in [M]^{n+1}: \\ M \geq  \sum_{k=0}^{n} i_k r_k }} \left(M + 1 - \sum_{k=0}^n i_k r_k\right) q^{\left( - \sum_{ k = 1}^n i_k(r_k - 1)\right)}.
	\]
\end{lem}

\begin{proof}
	For a given $W$, every encounter of $V$ in $W$ can be defined by the starting location $j$ of the substring and the lengths $\ang{i_k = |\phi(x_k)|}_{k = 0}^{n}$ of the letter-images under the homomorphism $\phi$.

	To compute $\EE(\hom(V,W_M))$ over random selection of $W_M \in [q]^M$, we sum over all possible $j$ and $\ang{i_k}_{k = 0}^{n}$ the probability that, for every $k \leq n$, the $r_k$ substrings of length $i_k$ (which are to be the instances of $x_k$) are identical.

	Our outside $(n+1)$-fold summation is over the possible lengths $i_k$, which are positive integers with $|\phi(V)| = \sum_{k = 0}^n i_k r_k \leq M$.
	This leaves $M + 1 - |\phi(V)|$ possible values for $r$, the starting location of the instance.

	For each $k$, only one of the $r_k$ instances of $x_k$ can consists of arbitrary letters and then the rest, with their $i_k(r_k - 1)$ letters, are determined.
	Thus, the probability of an encounters for given $r$ and $\ang{i_k}_{k = 0}^{n}$ is 
	\[ q^{\left( - \sum_{ k=0}^n i_k(r_k - 1)\right)} = q^{\left(- \sum_{ k=1}^n i_k(r_k - 1)\right)}. \]
\end{proof}

\begin{thm} \label{IV}
	Let $V$ be a word with $\L(V) = \{x_0,x_1,\cdots,x_n\}$, $|\L(V)| = n+1$, where $x_0$ occurs once in $V$ and $x_k$ occurs $r_k>1$ times in $V$ for each $k \in [n]$.
	Then for $q \geq 2$,
	\[ \underline{\delta}(V,q) \leq \II(V,q) \leq \prod_{k = 1}^{n} \frac{1}{q^{(r_k-1)} - 1}. \]
\end{thm}

\begin{proof}	
	For $\ang{i_1,\ldots,i_n} \in (\Z^+)^n$, let $M_\ell = M - \sum_{k>\ell} i_k r_k$ for $-1 \leq \ell \leq n$, so $M_n = M$ and $M_{\ell - 1} = M_\ell - i_\ell r_\ell$.
	Then Lemma~\ref{homV} says
	\[ 
		\EE(\hom(V,W_M)) = \sum_{\substack{\ang{i_0,\ldots,i_n} \in [M]^{n+1}: \\ M \geq  \sum_{k=0}^{n} i_k r_k }} \left(M_{-1} + 1\right) q^{\left( - \sum_{ k = 1}^n i_k(r_k - 1)\right)}.
	\]
	Since $M_0(M_0+1)$ is always nonnegative,
	\begin{eqnarray*}
		\EE(\hom(V,W_M)) & = & \sum_{\substack{\ang{i_0,\ldots,i_n} \in [M]^{n+1}: \\ M \geq  \sum_{k=0}^{n} i_k r_k }} \left(M_{-1} + 1\right) q^{\left( - \sum_{ k = 1}^n i_k(r_k - 1)\right)} \\
		& = & \sum_{\substack{\ang{i_1,\ldots,i_n} \in [M]^{n}: \\ M >  \sum_{k=0}^{n} i_k r_k }} \sum_{i_0=1}^{M_0} \left(M_0 - i_0 + 1\right) q^{\left( - \sum_{ k = 1}^n i_k(r_k - 1)\right)} \\
		& = & \sum_{\substack{\ang{i_1,\ldots,i_n} \in [M]^n: \\ M > \sum_{k=1}^{n} i_k r_k }} \frac{1}{2}M_0(M_0 + 1) q^{\left( - \sum_{ k = 1}^n i_k(r_k - 1)\right)} \\
		& \leq & \sum_{\ang{i_1,\ldots,i_n} \in (\Z^+)^n} \frac{1}{2}M_0(M_0 + 1) q^{\left( - \sum_{ k = 1}^n i_k(r_k - 1)\right)}.
	\end{eqnarray*}

	Claim: For $0 \leq \ell \leq n$,
	\begin{eqnarray*}
		&& \sum_{\ang{i_1,\ldots,i_n} \in (\Z^+)^n} \frac{1}{2}M_0(M_0 + 1) q^{\left( - \sum_{ k = 1}^n i_k(r_k - 1)\right)}\\
		&=& \sum_{\ang{i_{\ell + 1}, \ldots, i_n} \in (\Z^+)^{n-\ell}} \frac{1}{2}R_\ell(q,M_\ell)q^{\left( - \sum_{ k = \ell+1}^n i_k(r_k - 1)\right)} ,
	\end{eqnarray*}
	where $R_\ell(q,x) \in \R[x]$ is a quadratic polynomial with coefficients depending on $q$ and 
	\[ [x^2]\left(R_\ell(q,x)\right) = \frac{1}{q^{(r_\ell-1)}-1} \cdot [x^2]\left(R_{\ell-1}(q,x)\right) = \prod_{k = 1}^{\ell} \frac{1}{q^{(r_k-1)}-1}. \]

	We already know the claim to be true for $\ell = 0$ with $R_0(q,x) = x^2 + x$.
	We proceed in proving the full claim by induction on $\ell$. 
	Assume the claim holds for $\ell-1$ with $R_{\ell-1}(q,x) = ax^2 + bx+c$.
	\begin{eqnarray*}
		&&\sum_{\ang{i_{\ell}, \ldots, i_n} \in (\Z^+)^{n-\ell+1}} \frac{1}{2}R_{\ell-1}(q,M_{\ell-1})q^{\left( - \sum_{ k = \ell}^n i_k(r_k - 1)\right)}\\
		&=& \sum_{\ang{i_{\ell + 1}, \ldots, i_n} \in (\Z^+)^{n-\ell}} \sum_{i_\ell = 1}^{\infty} \frac{1}{2}R_{\ell-1}(q,M_{\ell} - i_\ell r_\ell)q^{\left( - \sum_{ k = \ell}^n i_k(r_k - 1)\right)} \\
		&=& \sum_{\ang{i_{\ell + 1}, \ldots, i_n} \in (\Z^+)^{n-\ell}}  \sum_{i_\ell = 1}^{\infty} \frac{1}{2}\left[a(M_{\ell} - i_\ell r_\ell)^2 + b(M_{\ell} - i_\ell r_\ell) + c \right]q^{\left( - \sum_{ k = \ell}^n i_k(r_k - 1)\right)}\\
		&=& \sum_{\ang{i_{\ell + 1}, \ldots, i_n} \in (\Z^+)^{n-\ell}} \frac{1}{2}q^{\left( - \sum_{ k = \ell+1}^n i_k(r_k - 1)\right)}\sum_{i = 1}^{\infty} \left[a' + b' i + c'i^2\right] \left(q^{(1-r_\ell)}\right)^i,
	\end{eqnarray*}
	where $a' = aM_\ell^2 + bM_\ell + c$, $b' =-2aM_\ell r_\ell -br_\ell$, and $c' = ar_\ell^2$.
	Since $q^{(1-r_\ell)} \in (0,1)$, we have for some $d_1$ and $d_2$ dependent on $q$ and $r_\ell$:
	\begin{eqnarray*}
		\sum_{i = 1}^{\infty} \left(q^{(1-r_\ell)}\right)^i & = & \frac{1}{q^{(r_\ell - 1)} - 1} ; \\
		\sum_{i = 1}^{\infty} i \left(q^{(1-r_\ell)}\right)^i & = & d_1 ; \\
		\sum_{i = 1}^{\infty} i^2\left(q^{(1-r_\ell)}\right)^i & = & d_2 .
	\end{eqnarray*}
	We complete the proof of the claim with 
	\begin{eqnarray*}
		R_\ell(q,M_\ell) & = & a'\frac{1}{q^{(r_\ell - 1)} - 1} + b'd_1 + c'd_2 \\
		& = & (aM_\ell^2 + bM_\ell + c)\frac{1}{q^{(r_\ell - 1)} - 1} + (-2aM_\ell r_\ell -br_\ell)d_1 + (ar_\ell^2)d_2 \\
		& = & \left[a\frac{1}{q^{(r_\ell - 1)} - 1} \right]M_\ell^2 + \left[b\frac{1}{q^{(r_\ell - 1)} - 1}  - 2ar_\ell d_1 \right] M_\ell \\
		& &  + \left[c\frac{1}{q^{(r_\ell - 1)} - 1} -br_\ell d_1 + ar_\ell^2d_2 \right].
	\end{eqnarray*}

	To complete the proof of the theorem, apply the claim to $\ell = n$ and let $M \rightarrow \infty$.
	\begin{eqnarray*}
		 \EE(\hom(V,W_M)) & = & \sum_{\ang{i_1,\ldots,i_n} \in (\Z^+)^n} \frac{1}{2}M_0(M_0 + 1) q^{\left( - \sum_{ k = 1}^n i_k(r_k - 1)\right)}\\
		& \leq & \frac{1}{2}R_n(q,M_n)q^{\left( - \sum_{ k \in \emptyset} i_k(r_k - 1)\right)} \\
		& = & \frac{1}{2}R_n(q,M)q^{\left( - 0\right)} \\
		& \sim &  \frac{1}{2}M^2 \prod_{k = 1}^{n} \frac{1}{q^{(r_k - 1)}-1}.
	\end{eqnarray*}
	Therefore,
	\begin{eqnarray*}
		\II(V,q) & = & \lim_{M \rightarrow \infty} \EE(\delta(V,W_M))\\
		& \leq & \lim_{M \rightarrow \infty} \frac{1}{\binom{M+1}{2}} \EE(\hom(V,W_M)) \\
		& = & \prod_{k = 1}^{n} \frac{1}{q^{(r_k - 1)}-1}.
	\end{eqnarray*}
\end{proof}

\section{Density of Doubled Words}

Our main dichotomy says that the average density of a doubled word in large random words (over a fixed alphabet with at least two letters) goes to 0. 
Thus the expected number of instances in a random word of length $n$ is $o(n^2)$.
Perhaps we can find lower-order asymptotics for the expected number of instances of a doubled word. 
Hencefore, if $\binom{x}{y}$ is used with nonintegral $x$, we mean \[\binom{x}{y} = \frac{\prod_{i = 0}^{y-1} (x-i)}{y!}.\]

\begin{prn} [\cite{CR-15}, Proposition 4.1] \label{splice}
	For $k \in \Z^+$ and $\overline{r} = (r_1, \ldots, r_k) \in (\Z^+)^k$, let $a_n(\overline{r})$ be the number of $k$-tuples $\overline{a} = (a_1, \cdots, a_k) \in (\Z^+)^k$ so that $\sum_{i=1}^{k} a_ir_i = n$. 
	Then $a_{n}(\overline{r}) \leq \binom{n/d+k +1}{k +1}$, where $d = \gcd_{i \in [k]}(r_i)$.
\end{prn}

\begin{proof}
	If $d \! \not\!| \; n$, then $a_{n}(\overline{r}) = 0$. 
	Otherwise, for each $\overline{a}$ counted by $a_{n}(\overline{r})$, there is a unique corresponding $\overline{b}\in (\Z^+)^k$ such that $1 \leq b_1 < b_2 < \cdots < b_k = n/d$ and $b_j = \frac{1}{d} \sum_{i = 1}^{j} a_ir_i$. 
	The number of strictly increasing $k$-tuples of positive integers with largest value $n/d$ is $\binom{n/d+k +1}{k +1}$.
\end{proof}

Fix integer $q>0$. The number of instances of $V$ in $[q]^n$ is $q^n \II_n(V,q)$. 
Assume $V$ is doubled. 
Let $\Gamma = \L(V) = \{x_1, \ldots, x_k\}$ and $r_i$ be the multiplicity of $x_i$ in $V$ for each $i \in [k]$. 
Let $d = \gcd_{i \in [k]}(r_i)$ and $r = \min_{i \in [k]}(r_i)$.
Note that $\II_n(V,q) = 0$ when $d \!\not| \;n$.
But perhaps \[\lim_{\substack{n \rightarrow \infty \\ d \mid n}}\frac{q^{n}}{f(n)}\II_{n}(V,q)\]
exists for some function $f$ that only depends on $q$ and $V$.
For inspiration, note that $q^{n}\II_{n}(U^m,q) = q^{n/m} \II_{n/m}(U,\Sigma)$ when $m \mid n$.
Furthermore, using Proposition~\ref{splice},
\begin{equation} \label{instanceCount}
	q^{n}\II_{n}(V,q) \leq  \EE(\hom(V,W_{n})) < \binom{n/d+k +1}{k +1} q^{n/r}.
\end{equation}

Now select some letter $x$ of multiplicity $r$ and let $U$ be $V$ with all copies of $x$ removed. 
When $r | (n - |U|)$, we can get a lower bound on the number of instances by counting homomorphism $\phi$ with $|\phi(U)| = |U| = |V|-r$:
\begin{equation} \label{lower}
	q^{n}\II_{n}(V,q) \geq q^{(n - |U|)/r + (k-1)} = (q^{k-|V|/r})q^{n/r}.
\end{equation}

\begin{conj} [\cite{CR-15}, Conjecture 4.2] For $q \in \Z^+$, the following limit exists:
 	\[\lim_{\substack{n \rightarrow \infty \\ d \mid n}}q^{n(1 - 1/r)}\II_{n}(V,q).\]
\end{conj}

By \eqref{lower}, the limit (if it exists) cannot be 0. 
Theorem~\ref{nondoubledProb} is a special case of this conjecture, with $d = r = 1$. 
%

\section{Concentration} \label{Concentration}

For doubled $V$ and $q\geq 2$, we established that the expectation of the density $\delta(V,W_n)$ converges to zero.
What is the concentration of the distribution of this density?
By \eqref{instanceCount}, we can bound the probability that randomly chosen $W_n \in [q]^n$ is a $V$-instance: \[\PP(\delta_{sur}(V,W_n) = 1) = \II_{n}(V,q) \leq \binom{n/d+k +1}{k +1}q^{n(1-r)/r}.\] From this observation we get the following probabilistic result (which is only interesting for $q,r>1$).

\begin{lem} [\cite{CR-15}, Lemma 5.1] \label{ProbLem}
	Let $V$ be a word with $k$ distinct letters, each occurring at least $r \in \Z^+$ times. 
	Let $W_n \in [q]^n$ be chosen uniformly at random.
	Recall that $\binom{n+1}{2}\delta(V,W_n)$ is the number substrings of $W_n$ that are $V$-instances.
	Then for any nondecreasing function $f(n) > 0$, \[\PP\left(\binom{n+1}{2}\delta(V,W_n) > n\cdot f(n) \right) < n^{k+3}q^{f(n)(1-r)/r}.\]
\end{lem}

\begin{proof}
    Since $\delta_{sur}(V,W) \in \{0,1\}$,
	\begin{eqnarray*}
	    \sum_{m=1}^{\floor{f(n)}} \sum_{\ell=0}^{n-m}\delta_{sur}(V,W_n[\ell,\ell+m]) & < & n \cdot f(n).
	\end{eqnarray*}
	Therefore,
	\begin{eqnarray*}
		\PP\left(\binom{n+1}{2}\delta(V,W_n) > n\cdot f(n) \right) & = & \PP\left(\sum_{m=1}^n \sum_{\ell=0}^{n-m}\delta_{sur}(V,W_n[\ell,\ell+m]) > n\cdot f(n) \right) \\
		& < & \PP\left(\sum_{m=\ceil{f(n)}}^n \sum_{\ell=0}^{n-m}\delta_{sur}(V,W_n[\ell,\ell+m])> 0 \right) \\
		& < & \sum_{m=\ceil{f(n)}}^n \sum_{\ell=0}^{n-m}\PP\left(\delta_{sur}(V,W_n[\ell,\ell+m])> 0 \right) \\
		& = & \sum_{m=\ceil{f(n)}}^n (n-m+1) \PP\left(\delta_{sur}(V,W_m) = 1 \right) \\
		& \leq & \sum_{m=\ceil{f(n)}}^n (n-m+1)\binom{m/d+k +1}{k +1} q^{m(1-r)/r}\\
		& < & n^2 \binom{n/d+k +1}{k +1} q^{f(n)(1-r)/r}\\
		& < & n^{k+3}q^{f(n)(1-r)/r}.
	\end{eqnarray*}
\end{proof}

\begin{thm} [\cite{CR-15}, Theorem 5.2] \label{moments}
	Let $V$ be a doubled word, $q \geq 2$, and $W_n \in [q]^n$ chosen uniformly at random. 
	Then for $p \in \Z^+$, the $p$-th raw moment and the $p$-th central moment of $\delta(V,W_n)$ are both $O\left(\left(\log(n)/n\right)^p\right)$.
\end{thm}

\begin{proof}
	Let us use Lemma~\ref{ProbLem} to first bound the $p$-th raw moments for $\delta(V,W_n)$, assuming $r\geq 2$.
	To minimize our bound, we define the following function on $n$, which acts as a threshold for ``short'' substrings of a random length-$n$ word: \[s_p(n) = \frac{r}{1-r}\log_q(n^{-(k+5+p)}) = t_p \log_q n,\] where $t_p = \frac{r(k+5+p)}{r-1} > 0$.
	\begin{eqnarray*}
		\EE(\delta(V,W_n)^p) & = & \sum_{i = 0}^{\binom{n+1}{2}} \PP\left(\delta(V,W_n) = \frac{i}{\binom{n+1}{2}}\right)\left(\frac{i}{\binom{n+1}{2}}\right)^p\\
		& < & \sum_{i = 0}^{\floor{n\cdot s_p(n)}} \PP\left(\delta(V,W_n) = \frac{i}{\binom{n+1}{2}}\right)\left(\frac{i}{\binom{n+1}{2}}\right)^p \\&&+ \sum_{i = \ceil{n\cdot s_p(n)}}^{\binom{n+1}{2}} n^{k+3}q^{s_p(n)(1-r)/r}\left(\frac{i}{\binom{n+1}{2}}\right)^p\\
		& < & \left(\frac{n\cdot s_p(n)}{\binom{n+1}{2}}\right)^p +  n^{k+5 }q^{s_p(n)(1-r)/r}(1)^p\\
		& = & \left(\frac{nt_p \log_q n}{\binom{n+1}{2}}\right)^p +  n^{k+5 }q^{\log_q\left(n^{-(k+5+p)}\right)}\\
		&= &O_p\left(\left(\frac{\log n}{n}\right)^p\right).
	\end{eqnarray*}
	
	Setting $p=1$, $\EE_n= \EE(\delta(V,W_n)) < (c\log n)/n$ for some large $c$. We use this upper bound on the expectation (1st raw moment) to bound the central moments. 
	\begin{eqnarray*}
		\EE(\left|\delta(V,W_n) - \EE_n\right|^p) & = &  \sum_{i = 0}^{\binom{n+1}{2}} \PP\left(\delta(V,W_n) = \frac{i}{\binom{n+1}{2}}\right)\left|\frac{i}{\binom{n+1}{2}} - \EE_n \right|^p\\
		& < &  \sum_{i = 0}^{\floor{n\cdot s_p(n)}}  \PP\left(\delta(V,W_n) = \frac{i}{\binom{n+1}{2}}\right)\left(\frac{c\log n}{n}\right)^p \\
			& & + \sum_{i = \ceil{ns_p(n)}}^{\binom{n+1}{2}} \PP\left(\delta(V,W_n) = \frac{i}{\binom{n+1}{2}}\right)\left(1\right)^p\\
		& < &\left(\frac{c\log n}{n}\right)^p + n^{k+5}q^{s_p(n)(1-r)/r}\\
		&= &O_p\left(\left(\frac{\log n}{n}\right)^p\right).
	\end{eqnarray*}
\end{proof}

\begin{cor} [\cite{CR-15}, Corollary 5.3] \label{expectation}
	Let $V$ be a doubled word, $q \geq 2$, and $W_n \in [q]^n$ chosen uniformly at random. 
	Then \[\frac{1}{n} \ll \EE(\delta(V,W_n)) \ll \frac{\log n}{n}.\]
\end{cor}

\begin{proof}
	The upper bound was stated explicitly in the proof of Theorem~\ref{moments}. 
	The lower bound follows from an observation in Section~\ref{dense}: ``the event that $W_n[b|V|,(b+1)|V|]$ is an instance of $V$ has nonzero probability and is independent for distinct $b \in \N$.''
	Hence \[ \EE(\delta(V,W_n)) \geq \frac{1}{\binom{n+1}{2}}\floor{\frac{n}{|V|}}\II_{|V|}(V,q) = \Omega(n^{-1}) . \]
\end{proof}

The bound that Theorem~\ref{moments} gives on the variance (2nd central moment) is not very interesting. However, we obtain nontrivial concentration using covariance and the fact that most ``short'' substrings in a word do not overlap. 

\begin{thm} [\cite{CR-15}, Theorem 5.4] \label{variance}
	Let $V$ be a doubled word, $q \geq 2$, and $W_n \in [q]^n$ chosen uniformly at random. 
	\[ \Var(\delta(V,W_n)) =O\left(\EE(\delta(V,W_n))^2 \frac{(\log n)^3}{n}\right). \]
\end{thm}

\begin{proof}
	Let $X_n = \binom{n+1}{2}\delta(V,W_n)$ be the random variable counting the number of substrings of $W_n$ that are $V$-instances. 
	For fixed $n$, let $X_{a,b}$ be the indicator variable for the event that $W_n[a,b]$ is a $V$-instance, so $X_n = \sum_{a=0}^{n-1} \sum_{b=a+1}^{n} X_{a,b}$. 
	We use $(a,b) \sim (c,d)$ to denote that $[a,b]$ and $[c,d]$ overlap.
	Note that 
	\begin{eqnarray*}
		\Cov(X_{a,b},X_{c,d})& \leq & \EE(X_{a,b}X_{c,d})\\
		& \leq & \min(\EE(X_{a,b}),\EE(X_{c,d})) \\
		& = & \min(\II_{(b-a)}(V,q),\II_{(d-c)}(V,q)) , 
	\end{eqnarray*}
	and for $i \in \{b-a,d-c\}$,
	\[ \min(\II_{(b-a)}(V,q),\II_{(d-c)}(V,q)) \leq  \binom{i/d+k+1}{k+1}q^{i(1-r)/r} . \]
	For $i < n/3$, the number of intervals in $W_n$ of length at most $i$ that overlap a fixed interval of length $i$ is less than $\binom{3i}{2}$.
	Let $s(n)=s_0(n) = t_0\log_q n$ as defined in Theorem~\ref{moments}.
	For sufficiently large $n$,
    \begin{eqnarray*}
    	\Var(X_n) & = & \sum_{\substack{0 \leq a < b \leq n \\ 0 \leq c < d \leq n}} \Cov(X_{a,b},X_{c,d})\\
    	& \leq & \sum_{(a,b) \sim (c,d)} \min(\II_{(b-a)}(V,q),\II_{(b-a)}(V,q))  \\
    	& = & \left[ \sum_{\substack{(a,b) \sim (c,d) \\ b-a,d-c \leq s(n)}} + \sum_{\substack{(a,b) \sim (c,d) \\ else}} \right] \min(\II_{(b-a)}(V,q),\II_{(b-a)}(V,q)) \\
    	& < & 2\sum_{i = 1}^{\floor{s(n)}}(n+1-i)\binom{3i}{2}\cdot 1 \\
    	& &  + \sum_{i = \ceil{s(n)}}^{n} (n+1-i) \binom{n+1}{2}\cdot \binom{i/d+k+1}{k+1}q^{i(1-r)/r} \\
    	& < &2s(n)n(3s(n))^2 + nnn^2n^{k+1}q^{s(n)(1-r)/r}\\
    	& = &18(t_0 \log_q n)^3n + n^{5+k} q^{\log_q\left(n^{-(k+5)}\right)}\\\
    	& = & O(n(\log n)^3).
    \end{eqnarray*}

    Since $\EE(\delta(V,W_n)) = \Omega(n^{-1})$ by Corollary~\ref{expectation},
    \begin{eqnarray*}
    	\Var(\delta(V,W_n)) & = & \Var\left(\frac{X_n}{\binom{n+1}{2}}\right)\\
    	& = & \frac{\Var(X_n)}{\binom{n+1}{2}^2}\\
    	& = & O\left(\frac{(\log n)^3}{n^3}\right)\\
    	& = &O\left(\EE(\delta(V,W_n))^2 \frac{(\log n)^3}{n}\right).
    \end{eqnarray*}

\end{proof}

\begin{qun} [\cite{CR-15}, Question 5.5]
For nondoubled word $V$, what is the concentration of the density distribution of $V$ in random words?
\end{qun}

%
%
%
%
%
%
%
%
%
%
%

\chapter{Asymptotic Probability of Being Zimin} \label{ASYMP}

In Chapter~\ref{AVOID}, we investigated bounds on the length of words that avoid Zimin words.
In subsequent chapters, we proceeded to develop the theory of word densities, some of which applies to Zimin words.

We proved in Chapter~\ref{DICHOT} that the asymptotic instance probability of $V$ in $q$-ary words, $\II(V,q) = \lim_{n \rightarrow \infty} \II_n(V,q)$, exists for any word $V$, and is equal to the asymptotic expected density of $V$ in random words.
We also proved the following dichotomy for $q \geq 2$ (Theorem~\ref{dichotomy}): $\II(V,q) = 0$ if and only if $V$ is doubled (that is, every letter in $V$ occurs at least twice).
Trivially, if $V$ is composed of $k$ distinct, nonrecurring letters, then $\II_n(V,[q])=1$ for $n\geq k$, so $\II(V,q) = 1$.
But if $V$ contains at least one recurring letter, it becomes a nontrivial task to compute $\II(V,q)$.

\begin{cor}
	For $n,q \in \Z^+$, 
	\[ q^{-2^n + n + 1} \leq \II(Z_n,q) \leq \prod_{j = 1}^{n-1} \frac{1}{q^{(2^j-1)} - 1}. \]
\end{cor}

\begin{proof}
For the lower bound, note that $||Z_n|| = |Z_n| - |\L(Z_n)| = (2^n-1) - (n)$. 
Theorem~\ref{nondoubledProb} tells us that for all $q \in \Z^+$ and nondoubled $V$, $\II(V,q) \geq q^{-||V||}$.

For the upper bound, observe that the $n$ letters occurring in $Z_n$ have multiplicities $\ang{r_j = 2^j : 0 \leq j < n}$. 
Since there is exactly one nonrecurring letter in $Z_n$, $r_0 = 2^0 = 1$, Theorem~\ref{IV} provides an upper bound of $\prod_{j = 1}^{n-1} \frac{1}{q^{(r_j-1)} - 1}$.
\end{proof}

A nice property of these bounds is that they are asymptotically equivalent as $q \rightarrow \infty$.
For some specific $V$, we can do better.
In this chapter, we provide infinite series for computing the asymptotic instance probability $\II(V,q)$ for two Zimin words, $V = Z_2 = aba$ (Section~\ref{IZ2}) and $V = Z_3 = abacaba$ (Section~\ref{IZ3}).
Table~\ref{table:Z2Z3} below gives numerical approximations for $2 \leq q \leq 6$. 
Our method also provides bounds on  $\II(Z_n,q)$ for general $n$ (Section~\ref{IZn}).

\begin{table}[ht]
\centering
\begin{threeparttable}

    	\caption{Approximate values of $\II(Z_2,q)$ and $\II(Z_3,q)$ for $2 \leq q \leq 6$.} \label{table:Z2Z3}

    	\begin{tabular}{c}
	$\begin{array}{c | c | c | c | c | c | c }
    		q & 2 & 3 & 4 & 5 & 6 & \cdots \\ \hline
        		\II(Z_2,q) & 0.7322132 & 0.4430202 & 0.3122520 & 0.2399355 & 0.1944229 & \cdots\\ \hline
    		\II(Z_3,q) & 0.1194437 & 0.0183514 & 0.0051925 & 0.0019974 & 0.0009253 & \cdots\\
	\end{array}$
    	\end{tabular}
%

\end{threeparttable}
\end{table}


\section{Calculating \texorpdfstring{$\II(Z_2,q)$}{the asymptotic instance probability of Z2}} \label{IZ2}


Let $a_\ell = a_\ell^{(q)}$ be the number of bifix-free $q$-ary strings of length $\ell$. For $q=2$, this is sequence oeis.org/A003000; for $q=3$, oeis.org/A019308 \parencite{OEIS}.


\begin{lem}
	If word $W$ has a bifix, then it has a bifix of length at most $\lfloor |W|/2 \rfloor$.
\end{lem}

\begin{proof}
	Let $W$ be a word with minimal-length bifix of length $k$, $\lfloor |W|/2 \rfloor < k < |W|$. Then we can write $W = W_1W_2W_3$ where $W_1W_2 = W_2W_3$ and $|W_1W_2| = k = |W_2W_3|$. But then $W$ has bifix $W_2$ with $|W_2| < k$, which contradicts our selection of the shortest bifix of $W$.
\end{proof}

\begin{lem} $a_\ell = a_\ell^{(q)}$ has the following recursive definition:
	\begin{eqnarray*}
		a_0 & = & 0;\\
		a_1 & = & q;\\
		a_{2k} & = & qa_{2k-1} - a_k;\\
		a_{2k+1} & = & qa_{2k}.
	\end{eqnarray*}
\end{lem}

\begin{proof}
	Fix a $q$-letter alphabet. 
	Let $W = UV$ be a bifix-free word with $|U| = \ceil{\frac{|W|}{2}}$ and $|V| = \floor{\frac{|W|}{2}}$. 
	Suppose $UaV$ has a bifix for some letter $a$. 
	Then by the lemma, $UaV$ has a bifix is of length at most $|UaV|/2$. 
	But $W$ is bifix free, so the only possibility is $U = aV$.

	Therefore, for every bifix-free word of length $2k$ there are $q$ bifix-free words of length $2k+1$. 
	For every bifix-free word of length $2k-1$, there are $q$ bifix-free words of length $2k$, with exception of the the length-$2k$ words that are the square of a bifix-free word of length $k$.
\end{proof}

\begin{thm} \label{thm:IZ2}
	For $q \geq 2$,
	\begin{eqnarray*}
		\II(Z_2,q) &=& \sum_{j=0}^{\infty} \frac{(-1)^jq^{\left(1-2^{j+1}\right)}}{\prod_{k=0}^{j} \left(1 - q^{\left(1-2^{k+1}\right)}\right)}.
	\end{eqnarray*}
\end{thm}

\begin{proof}
	Since $a_\ell = a_\ell^{(q)}$ counts bifix-free words, the number of $q$-ary words of length $M$ that are $Z_2$-instances is (without double-count)
	\[\sum_{\ell=0}^{\lceil M/2 \rceil -1} a_\ell q^{M - 2\ell},\]
	so the proportion of $q$-ary words of length $M$ that are $Z_2$-instances is
	\[\frac{1}{q^M} \sum_{\ell=0}^{\lceil M/2 \rceil -1} a_\ell q^{M - 2\ell} = \sum_{\ell=0}^{\lceil M/2 \rceil -1} \frac{a_\ell}{q^{2\ell}}.\]
	Therefore $\II(Z_2,q)  = f(1/q^2)$, where $f(x) = f^{(q)}(x)$ is the generating function for $\{a_\ell\}_{\ell = 0}^\infty$:	\[f(x) = \sum_{\ell=0}^{\infty} a_\ell x^\ell.\]
	From the recursive definition of $a_\ell$, we obtain the functional equation
	\begin{eqnarray} \label{Z2func}
		f(x) = qx + qxf(x) - f(x^2).
	\end{eqnarray}
	Solving \eqref{Z2func} for $f(x)$ gives \[f(x) = \frac{qx - f(x^2)}{1-qx} = \cdots = \sum_{j=0}^{\infty} \frac{(-1)^jqx^{2^j}}{\prod_{k=0}^{j} (1 - qx^{2^k})}.\]
\end{proof}

\begin{cor} \label{cor:IZ2}
	For $q \geq 2$:
	\[\frac{1}{q} < \II(Z_2,q) < \frac{1}{q-1}.\]
	Moreover, as $q \rightarrow \infty$,
	\[\II(Z_2,q) = \frac{1}{q-1} - \frac{1+o(1)}{q^3}.\]
\end{cor}

\begin{proof}
	The lower bound follows from the fact that a word of length $M>2$ is a $Z_2$-instance when the first and last character are the same. 
	This occurrence has probability $1/q$. 
	Note that $f^{(q)}(q^{-2})$ is an alternating series.
	Moreover, the terms in absolute value are monotonically approaching 0; the routine proof of monotonicity can be found in the appendices  (Lemma~\ref{lemF}).
	Hence, the partial sums provide successively better upper and lower bounds: 
	\begin{eqnarray*}
		 f^{(q)}\left(\frac{1}{q^2}\right) & = & \sum_{j=0}^{\infty} \frac{(-1)^j\left(q^{1-2^{j+1}}\right)}{\prod_{k=0}^{j} \left(1 - \left(q^{1-2^{k+1}}\right)\right)};\\
		\\
		f^{(q)}\left(\frac{1}{q^2}\right) &>& \sum_{j=0}^{1} \frac{(-1)^j\left(q^{1-2^{j+1}}\right)}{\prod_{k=0}^{j} \left(1 - \left(q^{1-2^{k+1}}\right)\right)}\\
		& = &  \frac{1/q}{1-1/q} - \frac{1/q^3}{(1 - 1/q)(1 - 1/q^3)}\\
		& = & \frac{1}{q-1} - \frac{1+o(1)}{q^3};\\
		\\
		f^{(q)}\left(\frac{1}{q^2}\right) & < & \sum_{j=0}^{2} \frac{(-1)^jq\left(\frac{1}{q^2}\right)^{2^j}}{\prod_{k=0}^{j} \left(1 - q\left(\frac{1}{q^2}\right)^{2^k}\right)}\\
		& = &\frac{1}{q-1} -\frac{1+o(1)}{q^3} + \frac{1/q^5}{(1 - 1/q)(1 - 1/q^3)(1 - 1/q^5)}\\
		& = &\frac{1}{q-1} -\frac{1+o(1)}{q^3} +\frac{O(1)}{q^5}.
	\end{eqnarray*}
\end{proof}

\begin{table}[ht]
\centering
\begin{threeparttable}

	\caption{Approximate values of $\II(Z_2,q)$ for $2 \leq q \leq 8$.}

	\def\arraystretch{1.3}
	\begin{tabular}{c | c c c c c c c c c}

			$q$ & 2 & 3 & 4 & 5 & 6 & 7 & 8 \\ \hline 
			$q^{-1}$ & 0.50000 & .33333 & .25000 & .20000 & .16667 & .14286 & .12500\\ \hline
			$\II(Z_2,q)$ & 0.73221 & .44302 & .31225 & .23994 & .19442 & .16326 & .14062\\ \hline
			$(q-1)^{-1} - q^{-3}$ & 0.87500 & .46296 & .31771 & .24200 & .19537 &.16375 &.14090\\ \hline
			$(q-1)^{-1}$ & 1.00000 & .50000 & .33333 & .25000 & .20000 & .16667 & .14286\\
	\end{tabular}

\end{threeparttable}
\end{table}


%
%
%

\section{Calculating \texorpdfstring{$\II(Z_3,q)$}{the asymptotic instance probability of Z3}} \label{IZ3}

Will use similar methods to compute $\II(Z_3,q)$. 
To avoid unnecessary subscripts and superscripts, assume throughout this section that we are using a fixed alphabet with $q>1$ letters, unless explicitly stated otherwise.
Since $Z_2$ has more interesting structure than $Z_1$, there are more cases to consider in developing the necessary recursion. 

\begin{lem}
\label{P3cases}
	Fix bifix-free word $L$. 
	Let $W = LAL$ be a $Z_2$-instance with a $Z_2$-bifix. Then $LAL$ can be written in exactly one of the following ways:
	\begin{enumerate}[$\<$i$\>$]
		\item $LAL = LBLCLBL$ with $LBL$ the shortest $Z_2$-bifix of $W$ and $|C|>0$;
		\item $LAL = LBLLBL$ with $LBL$ the shortest $Z_2$-bifix of $W$;
		\item $LAL = LBLBL$ with $LBL$ the shortest $Z_2$-bifix of $W$;
		\item $LAL = LLFLLFLL$ with $LLFLL$ the shortest $Z_2$-bifix of $W$;
		\item $LAL = LLLL$.
	\end{enumerate}
\end{lem}

\begin{proof}
	With some thought, the reader should recognize that the five listed cases are in fact mutually exclusive. 
The proof that these are the only possibilities follows.

	Given that $W$ has a $Z_2$-bifix and $L$ is bifix-free, it follows that $W$ has a $Z_2$-bifix $LBL$ for some nonempty $B$. 
Let $LBL$ be chosen of minimal length. We break this proof into nine cases depending on the lengths of $L$ and $LBL$ (Figure~\ref{overlap}). 
Set $m = |W|$, $\ell = |L|$, and $k = |LBL|$.

	\begin{figure}[ht] 
	\centering
	\begin{threeparttable}

		\begin{tabular}{| c | c | c |} \hline

		\begin{tikzpicture}[scale=.85]
			\draw (0,0) rectangle node{$W$} (4,.6);
			\draw (2,1.7) node[above]{Case (1) $\rightarrow \ang{i}$};
			\draw (2,1.2) node[above]{$2k < m$};
		
			\draw (0,1.2) rectangle node{$B$} (1.5,.6);
			\draw (0,.6) rectangle node{$L$} (.5,1.2);
			\draw (1,1.2) rectangle node{$L$} (1.5,.6);
		
			\draw (4,0) rectangle node{$B$} (2.5,-.6);
			\draw (3,-.6) rectangle node{$L$} (2.5,0);
			\draw (4,0) rectangle node{$L$} (3.5,-.6);
		
		\end{tikzpicture}
& 
		\begin{tikzpicture}[scale=.85]
				\draw (0,0) rectangle node{$W$} (4,.6);
				\draw (2,1.7) node[above]{Case (2) $\rightarrow \ang{ii}$};
				\draw (2,1.2) node[above]{$2k = m$};
			
				\draw (0,1.2) rectangle node{$B$} (2,.6);
				\draw (0,.6) rectangle node{$L$} (.5,1.2);
				\draw (2,1.2) rectangle node{$L$} (1.5,.6);
			
				\draw (4,0) rectangle node{$B$} (2,-.6);
				\draw (2,-.6) rectangle node{$L$} (2.5,0);
				\draw (4,0) rectangle node{$L$} (3.5,-.6);
		\end{tikzpicture}
&		
		\begin{tikzpicture}[scale=.85]
				\draw (0,0) rectangle node{$W$} (4,.6);
				\draw (2,1.7) node[above]{Case (3) $\rightarrow \; \Rightarrow\!\Leftarrow$};
				\draw (2,1.2) node[above]{$m < 2k < m + \ell$};
			
				\draw (0,1.2) rectangle node{$B$} (2.1,.6);
				\draw (0,.6) rectangle node{$L$} (.5,1.2);
				\draw (2.1,1.2) rectangle node{$L$} (1.6,.6);
			
				\draw (4,0) rectangle node{$B$} (1.9,-.6);
				\draw (1.9,-.6) rectangle node{$L$} (2.4,0);
				\draw (4,0) rectangle node{$L$} (3.5,-.6);
		\end{tikzpicture}
\\ \hline
		\begin{tikzpicture}[scale=.85]
				\draw (0,0) rectangle node{$W$} (4,.6);
				\draw (2,1.7) node[above]{Case (4) $\rightarrow \ang{iii}$};
				\draw (2,1.2) node[above]{$2k = m + \ell$};
			
				\draw (0,1.2) rectangle node{$B$} (2.25,.6);
				\draw (0,.6) rectangle node{$L$} (.5,1.2);
				\draw (2.25,1.2) rectangle node{$L$} (1.75,.6);
			
				\draw (4,0) rectangle node{$B$} (1.75,-.6);
				\draw (1.75,-.6) rectangle node{$L$} (2.25,0);
				\draw (4,0) rectangle node{$L$} (3.5,-.6);
		\end{tikzpicture}
&		
		\begin{tikzpicture}[scale=.85]
				\draw (0,0) rectangle node{$W$} (4,.6);
				\draw (2,1.7) node[above]{Case (5) $\rightarrow \; \Rightarrow\!\Leftarrow$};
				\draw (2,1.2) node[above]{$m + \ell < 2k < m + 2\ell$};
			
				\draw (0,1.2) rectangle node{$B$} (2.35,.6);
				\draw (0,.6) rectangle node{$L$} (.5,1.2);
				\draw (2.35,1.2) rectangle node{$L$} (1.85,.6);
			
				\draw (4,0) rectangle node{$B$} (1.65,-.6);
				\draw (1.65,-.6) rectangle node{$L$} (2.15,0);
				\draw (4,0) rectangle node{$L$} (3.5,-.6);
		\end{tikzpicture}
&		
		\begin{tikzpicture}[scale=.85]
				\draw (0,0) rectangle node{$W$} (4,.6);
				\draw (2,1.7) node[above]{Case (6) $\rightarrow \ang{iv}/\Rightarrow\!\Leftarrow$};
				\draw (2,1.2) node[above]{$m + 2\ell = 2k < 2(m - \ell)$};
			
				\draw (0,1.2) rectangle node{$B$} (2.5,.6);
				\draw (0,.6) rectangle node{$L$} (.5,1.2);
				\draw (2.5,1.2) rectangle node{$L$} (2,.6);
			
				\draw (4,0) rectangle node{$B$} (1.5,-.6);
				\draw (1.5,-.6) rectangle node{$L$} (2,0);
				\draw (4,0) rectangle node{$L$} (3.5,-.6);
		\end{tikzpicture}
\\ \hline
		\begin{tikzpicture}[scale=.85]
				\draw (0,0) rectangle node{$W$} (4,.6);
				\draw (2,1.7) node[above]{Case (7) $\rightarrow \; \Rightarrow\!\Leftarrow$};
				\draw (2,1.2) node[above]{$m + 2\ell < 2k < 2(m - \ell)$};
			
				\draw (0,1.2) rectangle node{$B$} (2.9,.6);
				\draw (0,.6) rectangle node{$L$} (.5,1.2);
				\draw (2.9,1.2) rectangle node{$L$} (2.4,.6);
			
				\draw (4,0) rectangle node{$B$} (1.1,-.6);
				\draw (1.1,-.6) rectangle node{$L$} (1.6,0);
				\draw (4,0) rectangle node{$L$} (3.5,-.6);
		\end{tikzpicture}
&		
		\begin{tikzpicture}[scale=.85]
				\draw (0,0) rectangle node{$W$} (4,.6);
				\draw (2,1.7) node[above]{Case (8) $\rightarrow \ang{v} / \Rightarrow\!\Leftarrow$};
				\draw (2,1.2) node[above]{$k = m - \ell$};
			
				\draw (0,1.2) rectangle node{$B$} (3.5,.6);
				\draw (0,.6) rectangle node{$L$} (.5,1.2);
				\draw (3,1.2) rectangle node{$L$} (3.5,.6);
			
				\draw (4,0) rectangle node{$B$} (.5,-.6);
				\draw (.5,-.6) rectangle node{$L$} (1,0);
				\draw (4,0) rectangle node{$L$} (3.5,-.6);
		\end{tikzpicture}
&		
		\begin{tikzpicture}[scale=.85]
				\draw (0,0) rectangle node{$W$} (4,.6);
				\draw (2,1.7) node[above]{Case (9) $\rightarrow \; \Rightarrow\!\Leftarrow$};
				\draw (2,1.2) node[above]{$m - \ell < k < m$};
			
				\draw (0,1.2) rectangle node{$B$} (3.7,.6);
				\draw (0,.6) rectangle node{$L$} (.5,1.2);
				\draw (3.7,1.2) rectangle node{$L$} (3.2,.6);
			
				\draw (4,0) rectangle node{$B$} (.3,-.6);
				\draw (.3,-.6) rectangle node{$L$} (.8,0);
				\draw (4,0) rectangle node{$L$} (3.5,-.6);
		\end{tikzpicture}
\\ \hline
		\end{tabular}

		\caption[All possible ways the minimal $Z_2$-bifix of a word can overlap.]{All possible ways the minimal $Z_2$-bifix of $W$ can overlap, with $m = |W|$, $\ell = |L|$, and $k = |LBL|$} \label{overlap}

	\end{threeparttable}
	\end{figure}
	\begin{enumerate}[\text{Case} (1):]
		\item $2k < m$. This is $\< i\>$.
		\item $2k = m$. This is $\<ii\>$.
		\item $m < 2k < m + \ell$. In $LAL$, the first and last occurrences of $LBL$ overlap by a length strictly between $0$ and $\ell$. This is impossible, since $L$ is bifix-free.
		\item $2k = m + \ell$. This is $\<iii\>$
		\item $m + \ell < 2k < m + 2\ell$. The first and last occurrences of $LBL$ overlap by a length strictly between $\ell$ and $2\ell$. This is impossible, since $L$ is bifix-free.
		\item $m + 2\ell = 2k < 2(m - \ell)$. $LAL = L(DL)(LE)L$ where $DL = B = LE$. Thus $L$ is a bifix of $B$, so $LAL = LLFLLFLL$ where $B = LFL$. If $|F|>0$, this is $\<iv\>$. If $|F|=0$, then $LAL = LLLLLL$. But this contradicts the minimality of $LBL$, since $LLLLLL$ has $Z_2$-bifix $LLL$, which is shorter than $LBL = LLLL$.
		\item $m + 2\ell < 2k < 2(m - \ell)$. $LAL = LDLELD'L$ where $DLE = B = ELD'$. 
		Since $EL$ is a prefix of $B$, $LEL$ is a prefix of $LAL$. 
		Likewise, since $LE$ is a suffix of $B$, $LEL$ is a suffix of $LAL$.
		Therefore, $LEL$ is a bifix of $LAL$ and $|LEL| < |LDLEL| = |LBL|$, contradicting the minimality of $LBL$.
		\item $k = m - \ell$. $LAL = LLCLL$ where $LC = B = CL$. If $|C|=0$, this is $\<v\>$. Otherwise, $LCL$ is a bifix of $LAL$, contradicting the minimality of $LBL$.
		\item $m - \ell < k < m$. The first and last occurrences of $LBL$ overlap by a length strictly between $k-\ell$ and $k$. This is impossible, since $L$ is bifix-free.
	\end{enumerate}
\end{proof}

For fixed bifix-free word $L$ of length $\ell$, define $b_m^\ell$ to count the number of $Z_2$ words with bifix $L$ that are $Z_2$-bifix-free $q$-ary words of length $m$. 
Then 
\begin{equation} \label{eqn:IZ3}
\II(Z_3,q) = \sum_{\ell = 1}^{\infty} \left( a_\ell \sum_{m = 1}^\infty b_m^\ell q^{-2m} \right).
\end{equation}

In order to form a recursive definition of $b_n$ as we did for $a_n$, we now describe two new terms. Let $AB$ be a word of length $W$ with $|A| = \ceil{W/2}$ and $|B| = \floor{W/2}$. Then $AB$ has $q$ length-$(n+1)$ \textit{children} of the form $AxB$, each having $AB$ as its \textit{parent}. In this way every nonempty word has exactly $q$ children and exactly 1 parent, which establishes the 1:$q$ ratio of words of length $n$ to words of length $n+1$. The set of a word's children together with successive generations of progeny we refer to as that word's \textit{descendants}.

\begin{thm} \label{P3}
	$b_n^\ell = c_n^\ell + d_n^\ell$ where $c_n=c_n^\ell$ and $d_n=d_n^\ell$ are defined recursively as follows:
	\begin{eqnarray*}
		\text{For even }\ell:\\
		c_1 = \cdots = c_{2\ell} & = & 0,\\
		c_{2\ell+1} & = & q,\\
		c_{4\ell} & = & qc_{4\ell-1} - (c_{5\ell/2} + 1),\\
		c_{5\ell} & = & qc_{5\ell - 1} - (c_{5\ell/2} + c_{3\ell} - 1),\\
		c_{5\ell+1} & = & q(c_{5\ell} + c_{3\ell} - 1),\\
		c_{6\ell} & = & qc_{6\ell-1} - (c_{3\ell} - 1 + c_{5\ell/2});\\
		c_{2k} & = & qc_{2k-1} - (c_k + c_{k + \ell/2}) \text{ for } k>\ell,k \not\in\{2\ell,5\ell/2,3\ell\},\\
		c_{2k+1} & = & q(c_{2k} + c_{k + \ell/2}) \text{ for } k>\ell, k \neq 5\ell/2,\\
		d_1 = \cdots = d_{4\ell} & = & 0,\\
		d_{4\ell+1} & = & q,\\
		d_{5\ell} & = & qd_{5\ell-1} - 1,\\
		d_{5\ell+1} & = & q(d_{5\ell} + 1),\\
		d_{6\ell} & = & qd_{6\ell-1} - 1,\\
		d_{2k} & = & qd_{2k-1} - (d_k + d_{k+\ell} + d_{k + \ell/2}) \text{ for } k>2\ell,k \not\in \{5\ell/2,3\ell\},\\
		d_{2k+1} & = & q(d_{2k} + d_{k+\ell} + d_{k + \ell/2}) \text{ for } k\geq 2\ell, k \neq 5\ell/2.\\
		\text{For odd }\ell>1:\\
		c_1 = \cdots = c_{2\ell} & = & 0,\\
		c_{2\ell+1} & = & q,\\
		c_{4\ell} & = & q\left(c_{4\ell-1} + c_{\floor{\frac{5\ell}{2}}}\right) - (c_{2\ell} +1),\\
		c_{5\ell} & = & qc_{5\ell - 1} - (c_{3\ell} - 1),\\
		c_{5\ell+1} & = & q(c_{5\ell} + c_{3\ell} - 1) - c_{\ceil{\frac{5\ell}{2}}},\\
		c_{6\ell} & = & q\left(c_{6\ell-1} + c_{\floor{\frac{7\ell}{2}}}\right) - (c_{3\ell} -1),\\
		c_{2k} & = & q\left(c_{2k-1} + c_{k+\floor{\frac{\ell}{2}}}\right) - c_k; k>\ell,k \not\in \left\{2\ell,\ceil{\frac{\ell}{2}},3\ell\right\},\\
		c_{2k+1} & = & qc_{2k} - c_{k+\ceil{\frac{\ell}{2}}}; k>\ell,k \neq \floor{\frac{5\ell}{2}};\\
		d_1 = \cdots = d_{4\ell} & = & 0,\\
		d_{4\ell+1} & = & q,\\
		d_{5\ell} & = & qd_{5\ell-1} - 1,\\
		d_{5\ell+1} & = & q(d_{5\ell} + 1),\\
		d_{6\ell} & = & qd_{6\ell-1} - 1,\\
		d_{2k} & = & q\left(d_{2k-1}+ d_{k + \floor{\frac{\ell}{2}}}\right) - (d_k + d_{k+\ell}); k>2\ell,k\not\in \left\{\ceil{\frac{5\ell}{2}},3\ell\right\},\\
		d_{2k+1} & = & q\left(d_{2k} + d_{k+\ell}\right) -  d_{k + \ceil{\frac{\ell}{2}}}; k> 2\ell, k  \neq \floor{\frac{5\ell}{2}}.\\
		\text{For }\ell=1:\\
		c_1 = c_1 = c_2 & = & 0,\\
		c_3 & = & q,\\
		c_4 & = & qc_3 - 1,\\
		c_5 & = & qc_4 - (c_3 - 1),\\
		c_{6} & = & q(c_5 + c_3 - 1) - (c_3 - 1),\\
		c_{2k} & = & q(c_{2k-1} + c_k) - c_k; k>3,\\
		c_{2k+1} & = & qc_{2k} - c_{k+1}; k>2;\\
		d_1 = d_2 = d_3 = d_4 & = & 0,\\
		d_5 & = & q - 1,\\
		d_{6} & = & q(d_5+1) - 1,\\
		d_{2k} & = & q(d_{2k-1}+ d_k) - (d_k + d_{k+1}); k>3,\\
		d_{2k+1} & = & q(d_{2k} + d_{k+1}) -  d_{k + 1}; k> 2.
	\end{eqnarray*}

\end{thm}

\begin{proof}
	Fix a bifix-free word $L$ of length $\ell$.
	The full recursion is too messy to prove all at once, so we build up to it in stages.
	Within each stage, $\approx$ indicates an incomplete definition.
	Example word trees with small $q$ and short $L$ are found in Appendix~\ref{TREES}.

	\textbf{Stage I}\\
	Since $L$ is bifix free, any $Z_2$-instance with $L$ as a bifix has to be of greater length than $2\ell.$ Thus, $b_1 = \cdots = b_{2\ell} = 0$. 
	The only such words of length $2\ell+1$ are of the form $LxL$ for some letter $x$, therefore, $b_{2\ell+1} = q$.

	Every word of length $n>2\ell+1$ has $L$ as a bifix if and only if its parent has $L$ as a bifix. 
	This is why, for $k>\ell$, the definition of $b_{2k}$ includes the term $qb_{2k-1}$, and the definition of $b_{2k+1}$ includes the term $qb_{2k}$. If $b_n$ were counting $Z_2$-instances with bifix $L$, we would be done. 
	However, we do not want $b_n$ to count words that have a $Z_2$-bifix. 
	Thus, we must deal with each of the 5 cases listed in Lemma~\ref{P3cases}.

	First, let us deal with case $\ang{ii}$: $LAL = LBLLBL$ with $LBL$ the shortest $Z_2$-bifix of $LAL$. 
	The number of these of length $2k$ ($k > \ell$) is $b_k$. 
	Therefore, in the definition of $b_{2k}$, we subtract $b_k$. 
	Conveniently, the descendants of case-$\ang{ii}$ words are precisely words of case $\ang{i}$. 
	Therefore, we have accounted for two cases at once.

	Next, let us look at case $\ang{iii}$: $LAL = LBLBL$ with $LBL$ the shortest $Z_2$-bifix of $LAL$. 
	For the moment, assume $|L| = \ell$ is even. Then $|LBLBL|$ is even. 
	The number of such words of length $2k$ ($k > \ell$) is $b_{k+\ell/2}$. 
	We want to exclude words of this form, but we do not necessarily want to exclude their children. 
	Therefore, in the definition of $b_{2k}$ we subtract $b_{k+\ell/2}$, but then we add $qb_{k+\ell/2}$ in the definition of $b_{2k+1}$. 

	Now we look at when $|L|$ is odd, so $|LBLBL|$ is odd. 
	The number of such words of length $2k+1$ ($k > \ell$) is $b_{k+\ceil{\ell/2}}$. 
	Therefore, in the definition of $b_{2k+1}$ we subtract $b_{k+\ceil{\ell/2}}$, but then we add $qb_{(k - 1) +\ceil{\ell/2}} = qb_{k + \floor{\ell/2}}$ in the definition of $b_{(2(k-1)+1) + 1} = b_{2k}$.

	Our work so far renders the following tentative definition of $b_n$.
	\begin{eqnarray*}
		\text{For even }\ell:\\
		b_1 = \cdots = b_{2\ell} & = & 0,\\
		b_{2\ell+1} & = & q,\\
		b_{2k} & \approx & qb_{2k-1} - (b_k + b_{k + \ell/2}) \text{ for } k>\ell,\\
		b_{2k+1} & \approx & q(b_{2k} + b_{k + \ell/2}) \text{ for } k>\ell.\\
		\text{For odd }\ell:\\
		b_1 = \cdots = b_{2\ell} & = & 0,\\
		b_{2\ell+1} & = & q,\\
		b_{2k} & \approx & q(b_{2k-1} + b_{k+\floor{\ell/2}}) - b_k \text{ for } k>\ell,\\
		b_{2k+1} & \approx & qb_{2k} - b_{k+\lceil\ell/2\rceil} \text{ for } k>\ell.
	\end{eqnarray*}

	We continue with case $\ang{iv}$: $LAL = LLFLLFLL$ with $LLFLL$ the shortest $Z_2$-bifix of $LAL$. 
	Note that $|LLFLLFLL|$ is even. 
	It would apear that the number of such words of length $2k$ would be $b_{k - \ell}$ (counting words of the form $LFL$), which we could deal with in the same fashion as we did for case $\ang{iii}$. 
	However, when counting words of the form $LFL$, we do not want words of the form $LLGLL$, because $LLFLLFLL = LLLGLLLLGLLL$ is already accounted for in case $\ang{i}$.

	\textbf{Stage II}\\
	To address this issue, we will define two different recursions. 
	Let $d_n$ count the $Z_2$-instances of the form $LLALL$ that are $Z_2$-bifix free. 
	Let $c_n$ count all other $Z_2$-instances of the form $LAL$ that are $Z_2$-bifix free. Therefore, $b_n = c_n + d_n$ by definition. 

	As with $b_n$, we quickly see that $c_n = 0$ for $n\leq 2\ell$ and $c_{2\ell+1} = q$. 
	Now the shortest words counted by $d_n$ are of the form $LLxLL$ for some letter $x$, so $d_n = 0$ for $n\leq 4\ell$ and $d_{4\ell+1} = q$. 

	To deal with cases $\ang{i}$ and $\ang{ii}$, we can do the same things as before, but recognizing that $LL$ is a bifix of $LBLLBL$ if and only if $LL$ is a bifix of $LBL$. 
	Therefore, subtract $c_k$ in the definition of $c_{2k}$ and subtract $d_k$ in the definition of $d_{2k}$ (both for $k>\ell$).

	We also deal with case $\ang{iii}$ as before, recognizing that $LL$ is a bifix of $LBLBL$ if and only if $LL$ is a bifix of $LBL$. 
	For even $\ell$: subtract $c_{k+\ell/2}$ in the definition of $c_{2k}$ and add $qc_{k+\ell/2}$ in the definition of $c_{2k+1}$; subtract $d_{k+\ell/2}$ in the definition of $d_{2k}$ and add $qd_{k+\ell/2}$ in the definition of $d_{2k+1}$. 
	For odd $\ell$: subtract $c_{k+\ceil{\ell/2}}$ in the definition of $c_{2k+1}$ and add $qc_{k+\floor{\ell/2}}$ in the definition of $c_{2k}$; subtract $d_{k+\ceil{\ell/2}}$ in the definition of $d_{2k+1}$ and add $qd_{k+\floor{\ell/2}}$ in the definition of $d_{2k}$.

	Having split $b_n$ into $c_n$ and $d_n$, we can address case $\ang{iv}$: $LAL = LLFLLFLL$ with $LLFLL$ the shortest $Z_2$-bifix of $LAL$. 
	These words are counted by $d_n$, not by $c_n$, and there are $d_{k+\ell}$ such words of length $2k$. 
	Therefore, we subtract $d_{k+\ell}$ in the definition of $d_{2k}$ and add $qd_{k+\ell}$ in the definition of $d_{2k+1}$.

	This brings us to the following tentative definitions of $c_n$ and $d_n$.
	\begin{eqnarray*}
		\text{For even }\ell:\\
		c_1 = \cdots = c_{2\ell} & = & 0,\\
		c_{2\ell+1} & = & q,\\
		c_{2k} & \approx & qc_{2k-1} - (c_k + c_{k + \ell/2}),\\
		c_{2k+1} & \approx & q(c_{2k} + c_{k + \ell/2});\\
		d_1 = \cdots = d_{4\ell} & = & 0,\\
		d_{4\ell+1} & = & q,\\
		d_{2k} & \approx & qd_{2k-1} - (d_k + d_{k+\ell} + d_{k + \ell/2}),\\
		d_{2k+1} & \approx & q(d_{2k} + d_{k+\ell} + d_{k + \ell/2}).\\
		\text{For odd }\ell:\\
		c_1 = \cdots = c_{2\ell} & = & 0,\\
		c_{2\ell+1} & = & q,\\
		c_{2k} & \approx & q(c_{2k-1} + c_{k+\floor{\ell/2}}) - c_k,\\
		c_{2k+1} & \approx & qc_{2k} - c_{k+\lceil\ell/2\rceil};\\
		d_1 = \cdots = d_{4\ell} & = & 0,\\
		d_{4\ell+1} & \approx & q,\\
		d_{2k} & \approx & q(d_{2k-1}+ d_{k + \lfloor\ell/2\rfloor}) - (d_k + d_{k+\ell}),\\
		d_{2k+1} & \approx & q(d_{2k} + d_{k+\ell}) -  d_{k + \lceil\ell/2\rceil}.
	\end{eqnarray*}

	\textbf{Stage III}\\
	Next, let us deal with case $\ang{v}$: $LLLL$. 
	We merely need to subtract 1 in the definition of $c_{4\ell}$. 
	Since all of the words counted by $d_n$ are descendants of $LLLL$, this is what prevents overlap of the words counted by $c_n$ and $d_n$.

	There was a small omission in the previous stage. 
	When dealing with cases $\ang{i}$ and $\ang{ii}$, we pointed out that $LL$ is a bifix of $LBLLBL$ if and only if $LL$ is a bifix of $LBL$, this was a true and important observation. 
	The one problem is that $LLL$ has $LL$ as a bifix but is not of the form $LLALL$. 
	Therefore, $LLLLLL$ was ``removed'' in the definition of $c_{6\ell}$ when it should have been ``removed'' from $d_{6\ell}$. 
	We must account for this by adding 1 in the definition of $c_{6\ell}$ and subtracting 1 in the definition of $d_{6\ell}$.

	Similarly, in dealing with case $\ang{iii}$, we ``removed'' $LLLLL$ in the definition of $c_{5\ell}$ and ``replaced'' its children in the definition of $c_{5\ell+1}$. These should have happened to $d_n$. 
	Therefore, we add 1 and subtract $q$ in the definitions of $c_{5\ell}$ and $c_{5\ell+1}$, respectively, then subtract 1 and add $q$ in the definitions of $d_{5\ell}$ and $d_{5\ell+1}$, respectively.

	Since $LLL$ does not cause any trouble with case $\ang{iv}$, we are done building the recursive definition for even $\ell$ as found in the theorem statement.

	\textbf{Stage IV}\\
	The recursion for odd $\ell$ has the additional caveat that $\ell \neq 1$. When $\ell=1$, there exist conflicts in the recursive definitions: $4\ell + 1 = 5\ell$ and $5\ell+1 = 6\ell$. After consolidating the``adjustments'' for these cases, we get the definition for $\ell=1$ as appears in the theorem statement.
\end{proof}

With our recursively defined sequences $a_n$ and $b_n$, the latter in terms of $c_n$ and $d_n$, we are now able to formulate Theorem~\ref{thm:IZ2} for $Z_3$.

\begin{thm} \label{thm:IZ3}
For integers $q \geq 2$,
\begin{eqnarray*}
	 \II(Z_3,q) & = & \sum_{\ell = 1}^{\infty} a_\ell \left(\sum_{i = 0}^{\infty}(G(i) + H(i))\right).
\end{eqnarray*} 

where
\begin{eqnarray*}
	G(i) = G_\ell^{(q)}(i) &=& \frac{ (-1)^i r\!\left(q^{-2^{i+1}}\right) \prod_{j = 0}^{i-1} s\!\left(q^{-2^{j+1}}\right)}{ \prod_{k = 0}^i \left(1 - q^{1-2^{k+1}}\right)};\\
	r(x) = r_\ell^{(q)}(x) & = & qx^{2\ell+1} - x^{4\ell} + x^{5\ell} - qx^{5\ell+1} +x^{6\ell};\\
	s(x) = s_\ell^{(q)}(x) & = & 1 - qx^{1-\ell} + x^{-\ell};\\
	H(i) = H_\ell^{(q)}(i) &=& \frac{ (-1)^i u\!\left(q^{-2^{i+1}}\right) \prod_{j = 0}^{i-1} v\!\left(q^{-2^{j+1}}\right) }{ \prod_{k = 0}^i \left(1 - q^{1-2^{k+1}}\right)};\\
	u(x) = u_\ell^{(q)}(x) & = & qx^{4\ell+1} - x^{5\ell} + qx^{5\ell+1} - x^{6\ell};\\
	v(x) = v_\ell^{(q)}(x) & = & 1 - qx^{1-\ell} + x^{-\ell} - qx^{1-2\ell} + x^{-2\ell}.
\end{eqnarray*}
\end{thm}

\begin{proof}
	Recalling Equation~\eqref{eqn:IZ3},
	\begin{eqnarray*}
		\II(Z_3,q) & = & \sum_{\ell = 1}^{\infty} \left( a_\ell \sum_{m = 1}^\infty b_m^\ell q^{-2m} \right) \\
		& = & \sum_{\ell = 1}^{\infty} \left( a_\ell \sum_{m = 1}^\infty \left(c_m^\ell + d_m^\ell\right) q^{-2m} \right).
	\end{eqnarray*}

Similar to our proof for $\II(Z_2,q)$, let us define generating functions for the sequences $c_n = c_n^\ell$ and $d_n = d_n^\ell$:
\[ g(x) = g_\ell^{(q)}(x) = \sum_{i = 1}^\infty c_nx^n \text{ and } h(x) = h_\ell^{(q)}(x) = \sum_{i = 1}^\infty d_nx^n. \]
Despite having to write the recursive relations three different ways, depending on $\ell$, the underlying recursion is fundamentally the same and results in the following functional equations:
\begin{eqnarray}
	g(x) & = & q\left(xg(x) + x^{1-\ell}g(x^2) + x^{2\ell+1} - x^{5\ell+1}\right) \label{eqn:g}\\
	\nonumber & & - \left(g(x^2) + x^{-\ell}g(x^2) + x^{4\ell} - x^{5\ell} - x^{6\ell}\right);\\
	h(x) & = & q\left(xh(x) + x^{1-2\ell}h(x^2) + x^{1-\ell}h(x^2) + x^{4\ell+1} + x^{5\ell+1}\right) \label{eqn:h} \\
	\nonumber & & - \left(h(x^2) + x^{-2\ell}h(x^2) + x^{-\ell}h(x^2) + x^{5\ell} + x^{6\ell}\right).
\end{eqnarray}

Solving \eqref{eqn:g} for $g(x)$, we get
\begin{eqnarray}
	g(x) = \frac{r(x) - s(x)g(x^2)}{1-qx}, \label{eqn:g2}
\end{eqnarray}
with $r(x)$ and $s(x)$ as defined in the theorem statement. 
Expanding \eqref{eqn:g2} gives
\begin{eqnarray}
	\nonumber g(x) & = & \frac{r(x) - s(x)g(x^2)}{1-qx} \\
	\nonumber & = & \frac{r(x)}{1-qx}\left(1 -  \frac{s(x)}{r(x)}g(x^2)\right)\\
	\nonumber & = & \frac{r(x)}{1-qx}\left(1 -  \frac{s(x)}{r(x)}\frac{r(x^2) - s(x^2)g(x^4)}{1-qx^2}\right)\\
	\nonumber & = & \frac{r(x)}{1-qx}\left(1 -  \frac{s(x)}{r(x)}\frac{r(x^2)}{1-qx^2}\left(1 - \frac{s(x^2)}{r(x^2)}g(x^4) \right)\right)\\
	\nonumber & \vdots & \\
	& = & \sum_{i = 0}^\infty \frac{ (-1)^i r\!\left(x^{2^i}\right) \prod_{j = 0}^{i-1} s\!\left(x^{2^j}\right) }{ \prod_{k = 0}^i \left(1 - qx^{2^k}\right)}.
\end{eqnarray}
Likewise, solving \eqref{eqn:h} for $h(x)$, we get
\begin{eqnarray}
	h(x) &=& \frac{u(x) - v(x)h(x^2)}{1-qx}\\
	& = & \sum_{i = 0}^\infty \frac{(-1)^i u\!\left(x^{2^i}\right) \prod_{j = 0}^{i-1} v\!\left(x^{2^j}\right) }{ \prod_{k = 0}^i \left(1 - qx^{2^k}\right)},
\end{eqnarray}
with $u(x)$ and $v(x)$ as defined in the theorem statement.
\end{proof}

\begin{cor} \label{P3bounds}
For integers $N \geq 0$ and $M \geq 0$,
\begin{eqnarray*}
	\sum_{\ell = 1}^{N} a_\ell \left(\sum_{i = 0}^{2M+1} (G(i) + H(i)) \right) & \leq & \II(Z_3,q);\\
	 \II(Z_3,q) & \leq & q^{-N} + \sum_{\ell = 1}^{N} a_\ell \left(\sum_{i = 0}^{2M}(G(i) + H(i))\right),
\end{eqnarray*} 
with $G(i) = G_\ell^{(q)}(i)$ and $H(i) = H_\ell^{(q)}(i)$ as defined in Theorem~\ref{thm:IZ3}.
\end{cor}

\begin{proof} 
	For fixed integers $q\geq 2$ and $\ell \geq 1$, $\sum_{i=0}^\infty (G(i) + H(i))$ is an alternating series. 
	We need to show that the sequence $|G(i)+H(i)|$ is decreasing. Since $(-1)^iG(i)>0$ and $(-1)^iH(i) > 0$ for each $i$, $|G(i) +H(i)| = |G(i)| + |H(i)|$. 
	Thus it suffices to show that $\left\{|G(i)|\right\}_{i=1}^{\infty}$ and $\left\{|H(i)|\right\}_{i = 1}^\infty$ are both decreasing sequences, the routine proof of which can be found in the appendices (Lemma~\ref{lemGH}).
	
	Now for any integer $M\geq 0$:
	\[\sum_{i=0}^{2M+1} G_\ell(i) + H_\ell(i) < \sum_{m = 0}^{\infty} b_m^\ell q^{-2m} < \sum_{i=0}^{2M} G_\ell(i) + H_\ell(i).\]
	Moreover, since the $a_\ell$ are nonnegative, the lower bound for the theorem is evident. 
	For a bifix-free word $L$ of length $\ell$, $\sum_{m = 0}^{\infty} b_m^\ell q^{-2m}$ is the limit, as $M \rightarrow \infty$, of the probability that a word of length $M$ is a $Z_3$-instance of the form $LALBLAL$. 
	A necessary condition for such a word is that it starts and ends with $L$, which (for $M\geq 2\ell$) has probability $q^{-2\ell}$. 
	Also $a_\ell$ counts the number of bifix-free words of length $\ell$, so $a_\ell \leq q^\ell$. 
	Hence for any integer $N \geq 0$:
	\begin{eqnarray*}
		\II(Z_3,q) & < & \sum_{\ell=1}^{N}a_\ell \sum_{m = 0}^{\infty} b_m^\ell q^{-2m} + \sum_{\ell = N+1}^{\infty}q^\ell \left(q^{-2\ell}\right)\\
		& = &\sum_{\ell=1}^{N}a_\ell \sum_{m = 0}^{\infty} b_m^\ell q^{-2m} + \sum_{\ell = N+1}^{\infty} q^{-\ell}\\
		& \leq &\sum_{\ell=1}^{N}a_\ell \sum_{m = 0}^{\infty} b_m^\ell q^{-2m} + q^{-N}.
	\end{eqnarray*}
\end{proof}


\begin{table}[ht]
\centering
\begin{threeparttable}

	\caption{Approximate values of $\II(Z_3,q)$ for $2 \leq q \leq 6$.} \label{table:IZ3}

	\begin{tabular}{c | c | c | c | c | c}
		$q$ & 2 & 3 & 4 & 5 & 6 \\ \hline
		$\II(Z_3,q)$ & 0.11944370 & 0.01835140 & 0.00519251 & 0.00199739 & 0.00092532
	\end{tabular}

\end{threeparttable}
\end{table}

The values in Table~\ref{table:IZ3} were generated by the Sage code found in Appendix~\ref{P3code}, which was derived directly from Corollary~\ref{P3bounds} and can be used to compute $\II(Z_3,q)$ to arbitrary precision for any $q\geq 2$.

\section{Bounding \texorpdfstring{$\II(Z_n,q)$ for Arbitrary $n$}{the asymptotic instance probability of Zn}} \label{IZn}

This programme is not practical for $n$ in general. 
The number of cases for a generalization of Lemma 3.1 is likely to grow with $n$. 
Even if that stabilizes somehow, the expression for calculating $\II(Z_n,q)$ requires $n$ nested infinite series.
Nevertheless, ignoring some of the more subtle details, we proceed with this method to obtain a not-overly-messy way to calculate bounds for $\II(Z_n,q)$ in general.



Fix a $Z_{n-1}$-instance $L$ of length $\ell \geq 1$, let $\hat{b}_m^\ell$ be the number of words of length $m$ of the form $LAL$ for $A \neq \varepsilon$ but not of the form $LBLBL$. That is, $\hat{b}$ is an overcount for the number of $Z_n$-instances of the form $LAL$. Then $\hat{b}_m = \hat{b}_m^\ell$ is recursively defined as follows:
\begin{eqnarray*}
	\text{For even } \ell: \\
	\hat{b}_0 = \cdots = \hat{b}_{2\ell} & = & 0,\\
	\hat{b}_{2k} & = & q\hat{b}_{2k-1} - (\hat{b}_k + \hat{b}_{k + \ell/2}) \text{ for } k>\ell, \\
	\hat{b}_{2k+1} & = & q(\hat{b}_{2k} + \hat{b}_{k+\ell/2}) \text{ for } k>\ell. \\
	\text{For odd } \ell: \\
	\hat{b}_0 = \cdots = \hat{b}_{2\ell} & = & 0, \\
	\hat{b}_{2k} & = & q(\hat{b}_{2k-1} + \hat{b}_{k + \floor{\ell/2}}) - \hat{b}_k \text{ for } k>\ell, \\
	\hat{b}_{2k+1} & = & q\hat{b}_{2k} - \hat{b}_{k+\ceil{\ell/2}} \text{ for } k>\ell. 
\end{eqnarray*}

The the associated generating function $\hat{f}(x) := \hat{f}_\ell^q(x) = \sum_{m=1}^{\infty} \hat{b}_m^\ell x^m$ satisfies
\[\hat{f}(x) = q(x^{2\ell+1} + x\hat{f}(x) + x^{1-\ell}\hat{f}(x^2)) - (\hat{f}(x^2) + x^{-\ell}\hat{f}(x^2)).\]
Therefore, setting $t(x) = t_\ell^{(q)}(x) = 1 - qx^{1-\ell} + x^{-\ell}$,
\begin{eqnarray*}
	\hat{f}(x) & = & \frac{qx^{2\ell+1} - t(x)\hat{f}(x^2)}{1-qx} \\
	& = &  q\cdot \sum_{i = 0}^\infty \frac{ (-1)^i x^{(2^i)(2\ell+1)} \prod_{j = 0}^{i-1} t\!\left(x^{2^j}\right) }{ \prod_{k = 0}^i \left(1 - qx^{2^k}\right)}.
\end{eqnarray*}
Now $\hat{f}(q^{-2})$ gives an upper bound for the limit (as word-length approaches infinity) of the probability that a word is a $Z_n$-instance of the form $LAL$. 
We can write the following expressions as upper bounds for $\II(Z_n,q)$:
\begin{eqnarray*}
	\II(Z_n,q) & \leq &\sum_{\ell_0=1}^\infty \cdots \sum_{\ell_n=1}^\infty\sum_{m=1}^\infty a_{\ell_1}\hat{b}_{\ell_2}^{\ell_1} \cdots \hat{b}_{\ell_{n}}^{\ell_{n-1}}\hat{b}_{m}^{\ell_n} q^{-2\ell_n} ; \\
	\II(Z_n,q) & \leq &\sum_{\ell_0=1}^{N_1} \cdots \sum_{\ell_n=1}^{N_n} \sum_{m=1}^\infty a_{\ell_1}\hat{b}_{\ell_2}^{\ell_1} \cdots \hat{b}_{\ell_{n}}^{\ell_{n-1}}\hat{b}_{m}^{\ell_n} q^{-2\ell_n}\\
	& & + \; n\sum_{\ell = N_1+1}^\infty q^{-\ell} .
\end{eqnarray*}

A more precise recursion can be attained by extensive case-work, but the improvement gained is likely not worth the effort.

\chapter{Future Directions} \label{FUTURE}

\section{Word Densities}


\subsection{Limit Factor Densities}

We saw in our density comparison of Section~\ref{Compareakal} that the limsup factor density of $a^k$ is 1 for any $q,k \in \Z^+$. 
However, this is not the case for words with at least two distinct letters.
Generating functions or the de Bruijn graph may provide great tools for answering the following question.

\begin{qun}
	For $q \geq 2$ and $V$ with at least two distinct letters, what is 
	\[ \limsup_{\substack{W \in [q]^n \\ |W| \rightarrow \infty}} \d(V,W) ?\]
\end{qun}


\subsection{Density Comparisons}

The plots of possible $Z_2$- and $Z_3$-densities in short binary words (Figure~\ref{figure:deltaZ2Z3}) suggests a nonlinear asymptotic lower bound for $\delta(Z_3,W)$ in terms of $\delta(Z_2,W)$.
Moreover, it is surprising to observe that the minimum $Z_3$-density does not coincide with the minimum $Z_2$-density.
Considering the words $(a^ib^j)^n$ with $n \rightarrow \infty$, we see that the absolute upper bound of $y=x$ is asymptotically tight, at least for $x = \frac{i^2 + j^2}{(i + j)^2}$.

\begin{qun}
	Over a fixed alphabet, what is the asymptotic lower bound for $\delta(Z_3,W)$ in terms of $\delta(Z_2,W)$?
\end{qun}

\subsection{Encounter Enumeration}

Given a $V$-instance $W$, there might be multiple homomorphisms on $\L(V)^*$ that produce $W$.
For this reason, the number of encounters, $\hom(V,W)$, was only used to find an upper bound for $\delta(V,W)$.
However, the quantity $\frac{\hom(V,W)}{\binom{|W| + 1}{2}}$ is not generally expected to be less than 1.
The worst-case scenario is with factors of the form $a^k$, for which every one of the $\binom{k+1}{|V|+1}$ partitions into $|V|$ nonempty substrings gives a unique encounter.
However, when $V$ has exactly 1 nonrecurring letter, the lower and upper bounds on $\II(V,q)$ (Theorems~\ref{nondoubledProb},~\ref{IV}) are asympototic in $q$.
So for such $V$ and large random $W$, $\EE(\hom(V,W))$ is a good estimate for $\EE\left(\binom{|W|+1}{2}\delta(V,W)\right)$.
Yet we see from the proof of Lemma~\ref{bulk}, that if $V$ has multiple nonrecurring letters, we can expect numerous homomorphisms for a given instance.

\begin{qun}
	Fixed $q \geq 2$.
	Assuming a uniformly random selection of $W_n \in [q]^n$, let $\hom_{n,sur}(V,q)$ be the expected number of nonerasing homomorphisms $\phi:\L(V)^* \rightarrow [q]^*$ such that that $\phi(V) = W_n$.
	If $V$ has exactly $k$ nonrecurring letters, what is the asymptotic growth of 
	\[ \frac{\hom_{n,sur}(V,q)}{\II_n(V,q)} \]
in terms of $n$, $k$, and $q$?
\end{qun}

\subsection{Abelian Encounters}

In Problem (II.2) of a list of unsolved problems, \textcite{E-61} suggested that `perhaps an infinite sequence of four symbols can be formed without consecutive ``identical'' [factors]' where two word are ``identical'' provided `each symbol occurs the same number of times in both of them (i.e., we disregard order).'
For a summary of the history of this problem by Erd\H{o}s, through its positive answer by \textcite{D-79}, see Section 5.3 of \textcite{BLRS-08}. This appears to be the first consideration of what are now called Abelian encounters.
\begin{defn}
	Word $W$ is an \emph{Abelian $V$-instance} for word $V = a_1a_2\cdots a_n$ provided $W = A_1A_2\cdots A_n$ for nonempty words $A_i$ such that $A_i$ and $A_j$ are anagrams whenever $a_i = a_j$.
	$W$ \emph{encounters $V$ in an Abelian sense} provided some factor of $W$ is an Abelian $V$-instance.
\end{defn}

\textcite{C-05} restates and introduces a number of open problems regarding avoidability in the Abelian sense.
It was in response to Currie's paper that \textcite{T-14} proved the Abelian variant of Theorem~\ref{TaoLower}, with which he established a lower bound for Zimin-avoidance.
It is perhaps worth reproducing the present density results in the Abelian sense.

%

%

%
%
%
%

\section{Word Limits} \label{future:WordLimits}

\subsection{Convergence}

A driving force of the Graph Limits programme (see \cite{L-12}) is found in the various forms of convergence, especially for dense graphs. 
For example, a sequence of graphs $\{G_n\}_{n=1}^{\infty}$ with $|V(G)| \rightarrow \infty$ is \emph{left-convergent} provided the graph densities $t(F,G_n)$ converge for every finite graph $F$. 
There is also a concept of \emph{right-convergence}, convergences via a cut metric $\delta_\square$, convergence of ground state energy (from statistical physics), and more. 
The remarkable fact is that many of these forms of convergence are equivalent. 

Now there are multiple ways to define convergence of a sequence of words $\{W_n\}_{n=1}^{\infty}$ with length $|W_n| \rightarrow \infty$. 
One might define convergence in terms of factors:
\begin{itemize}
	\item $W_n$ is an initial factor of $W_{n+1}$ for all $n$;
	\item $W_n \leq W_{n+1}$ for all $n$;
	\item $d(V,W_n)$ converges for every finite words $V$;
	\item $\PP(V \text{ is followed by } x \text{ in } W_n)$ converges for every word-letter pair $(V,x)$.
\end{itemize}
Alternatively, convergence could be defined in terms of instances:
\begin{itemize}
	\item $W_{n+1}$ is an instance of $W_n$ for all $n$;
	\item $W_n \preceq W_{n+1}$ for all $n$;
	\item $\delta(V,W_n)$ converges for every finite words $V$.
\end{itemize}

These are clearly not all equivalent, but which ones are? More importantly, which ones are productive for a combinatorial limit theory.

\subsection{Lexons}

The rigorous theory of convergent graph sequences is crowned by the concept of a \emph{graphon}, the limit object for dense graphs. 
A graphon is a symmetric function $w: [0,1]^2 \rightarrow [0,1]$, and is determined (up to a measure 0 set and application of a measure preserving function on $[0,1]$) by the set of homomorphism densities of graphs into it. 
For example, the triangle-density of $w$ is 
\[t(K_3, w) = \int_{[0,1]^3} w(x,y)w(y,z)w(z,x) \; dx \; dy \; dz.\]
Since graphons lie in a compact space, various analytic tools can be used to develop continuous theory that then applies to associated large graphs.

\begin{qun}
	Do there exists limit objects for free words that lie in some compact space.
	Further, can we define metrics on words that extends productively to the limit object?
\end{qun}

For example, if we define convergence to be that ``$W_n$ is an initial factor of $W_{n+1}$ for all $n$,'' then the obvious limit object is a right-infinite word. 
For convergence defined as ``$W_n \leq W_{n+1}$ for all $n$,'' the limit object should be a bi-infinite word.
However, these particular forms of convergence do not appear sufficiently strong to guarantee any form of homomorphism density in the limit object. 

\subsection{Randomness}

A foundational result in graph theory is the Szemeredi Regularity Lemma, which roughly states that the vertex set of every sufficiently large graph can be partitioned so that the edges between parts are ``random-like.''
Generally quasirandomness is used to characterize a sequence of ``random-like'' graphs. 
Several of the many equivalent definitions of quasirandomness are in terms of the homomorphism densities of graphs.

\begin{qun}
	Does there exists a productive definition of quasirandomness for free words?
\end{qun}

Perhaps this would be in terms of factor or instance densities, or perhaps in terms of transition probabilities as used in the de Bruijn graph (Section~\ref{DeB}).

\printbibliography

\appendix

\chapter{Computations for Zimin Word Avoidance} \label{AppendZ}


\section{All Binary Words that Avoid \texorpdfstring{$Z_2$}{Z2}}

The following 13 words are the only words over the alphabet $\{0,1\}$ that avoid  the second Zimin word, $Z_2 = aba$. 

\begin{table}[ht]
\centering
\begin{threeparttable}

	\caption{Binary words that avoid $Z_2$.}
	
	\begin{tabular}{c}
	$\begin{matrix}
		\varepsilon, & 0, & 00, & 001, & 0011, \\
		& & 01, & 011, & \\
		& 1, & 10, & 100, & \\
		& & 11, & 110, & 1100. 
	\end{matrix}$
	\end{tabular}

\end{threeparttable}
\end{table}

\newpage

\section{Maximum-Length Binary Words that Avoid \texorpdfstring{$Z_3$}{Z3}}

The 48 words in Table~\ref{tableZ3avoid} are all the words of length $f(3,2)-1 = 28$ over the alphabet $\{0,1\}$ that avoid $Z_3= abacaba$. All binary words of length at least $f(3,2) = 29$ encounter $Z_3$. This result is easily computationally verified by constructing the binary tree of words on $\{0,1\}$, eliminating branches as you find words that encounter $Z_3$.

\begin{table}[ht]
\centering
\begin{threeparttable}
	\caption{Maximum-length binary words that avoid $Z_3$.} \label{tableZ3avoid}
			\begin{tabular}{c | c}
				0010010011011011111100000011,&1100000010010011011011111100,\\
				0010010011111100000011011011,&1100000010010011111101101100,\\
				0010010011111101101100000011,&1100000010101100110011111100,\\
				0010101100110011111100000011,&1100000010101111110011001100,\\
				0010101111110000001100110011,&1100000011001100101011111100,\\
				0010101111110011001100000011,&1100000011001100111111010100,\\
				0011001100101011111100000011,&1100000011011010010011111100,\\
				0011001100111111000000101011,&1100000011011011111100100100,\\
				0011001100111111010100000011,&1100000011111100100101101100,\\
				0011011010010011111100000011,&1100000011111100110011010100,\\
				0011011011111100000010010011,&1100000011111101010011001100,\\
				0011011011111100100100000011,&1100000011111101101100100100,\\
				0011111100000010010011011011,&1100100100000011011011111100,\\
				0011111100000010101100110011,&1100100100000011111101101100,\\
				0011111100000011001100101011,&1100100101101100000011111100,\\
				0011111100000011011010010011,&1100110011000000101011111100,\\
				0011111100100100000011011011,&1100110011000000111111010100,\\
				0011111100100101101100000011,&1100110011010100000011111100,\\
				0011111100110011000000101011,&1101010000001100110011111100,\\
				0011111100110011010100000011,&1101010000001111110011001100,\\
				0011111101010000001100110011,&1101010011001100000011111100,\\
				0011111101010011001100000011,&1101101100000010010011111100,\\
				0011111101101100000010010011,&1101101100000011111100100100,\\
				0011111101101100100100000011,&1101101100100100000011111100.
			\end{tabular}
\end{threeparttable}
\end{table}

\newpage

\section{A Long Binary Word that Avoids \texorpdfstring{$Z_4$}{Z4}}

Figure~\ref{figureZ4avoid} shows a binary word of length 10482 that avoids $Z_4 = abacabadabacaba$. This implies that $f(4,2)\geq 10483$. The word is presented here as an image with each row, consisting of 90 squares, read left to right. Each square, black or white, represents a bit. For example, the longest string of black in the first row is 14 bits long. We cannot have the same bit repeated $15 = |Z_4|$ times consecutively, as that would be a $Z_4$-instance. A string of 14 white bits is found in the $46^{\mathrm{th}}$ row.

\begin{figure}[ht]
\centering
\begin{threeparttable}

	\begin{tabular}{c}
		\includegraphics[width=270px]{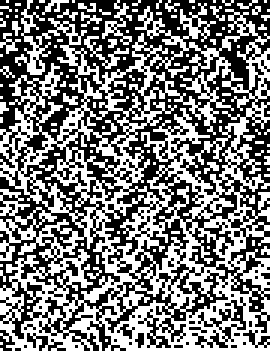}
	\end{tabular}

	\caption{A binary word of length 10482 that avoids $Z_4 = abacabadabacaba$.} \label{figureZ4avoid}

\end{threeparttable}
\end{figure}

\newpage

\section{Verifying \texorpdfstring{$Z_n$}{Zn}-Avoidance}

The code to generate a $Z_4$-avoiding word of length 10482 is messy. 
The following, easy-to-validate, inefficient, brute-force, Sage \parencite{S-14} code was used for verification of the word above. 
It took roughly 12 hours of computation on an Intel\textregistered\,Core\texttrademark\,i5-2450M CPU $@$ 2.50GHz $\times$ 4.


\begin{lstlisting}[frame=single]
# Recursive function to test if V is an instance of Z_n.
def inst(V, n):
	if len(V)==0:
		return False
	if n==1:
		return True
	for i in range(2^(n - 1) - 1, ceil(len(V)/2)):
		if V[:i]==V[-i:]:
			if inst(V[:i], n - 1):
				return True
	return False
W = # Paste word here as a string.
(L, n) = (len(W), 4)
# Check every subword V of length at least 2^n - 1.
for b in range(L + 1):
	for a in range(b - (2^n - 1)):
		if inst(W[a:b], n):
			print a, b, W[a:b]
\end{lstlisting}

\chapter{Computational Comparison: \texorpdfstring{$\delta(Z_2,W)$ vs. $\delta(Z_3,W)$}{Densities of Zimin Words}} \label{Z2Z3}


Figure~\ref{figure:Z2Z3} below shows plots of all $(x,y)$-pairs with $x = \delta(Z_2,W)$ and $y = \delta(Z_3,W)$ for binary words $W \in [2]^k$, where $k \in \{13,16,19,22,25,28\}$. 
More discussion of these plots is found in Section~\ref{Comparez2z3}. 
The following Sage \parencite{S-14} code was used to compute all $(x,y)$-pairs in the plots. 

\begin{lstlisting}[frame=single]
def is_Zn(W, n): # Checks if nonempty W is a Zn-instance.
	if n==1:
		return True
	for i in range(1, ceil(len(W)/2)):
		if W[:i]==W[-i:] and is_Zn(W[:i], n - 1):
			return True
	return False
def z2z3(W): # Counts Z2- and Z3-instance substrings.
	(M, z2, z3) = (len(W), 0, 0)
	for i in range(M - 2):
		for j in range(i + 3, M + 1):
			V = W[i:j]
			if is_Zn(V, 2):
				z2 += 1
				if is_Zn(V, 3):
					z3 += 1
	return [z2, z3]
L = 10 # Change to desired word-length.
(D2, D3) = ([1], []) # Create lists to store density values.
for n in xrange(2^L): # Check every binary word of length L.
	word = str(bin(n))[2:]
	word = '0'*(L - len(word)) + word
	p = z2z3(word)
	d2 = p[0]/binomial(L + 1, 2)
	d3 = p[1]/binomial(L + 1, 2)
	i = 0
	while d2>D2[i]:
		i += 1
	if d2<D2[i]:
		D2.insert(i, d2)
		D3.insert(i, set([]))
	D3[i].add(d3)
D2.pop(-1) # Remove the unnecessary 1.
\end{lstlisting}

\begin{figure}[ht]
\centering
\begin{threeparttable}
	\begin{tabular}{c c}
		\includegraphics[width=203px]{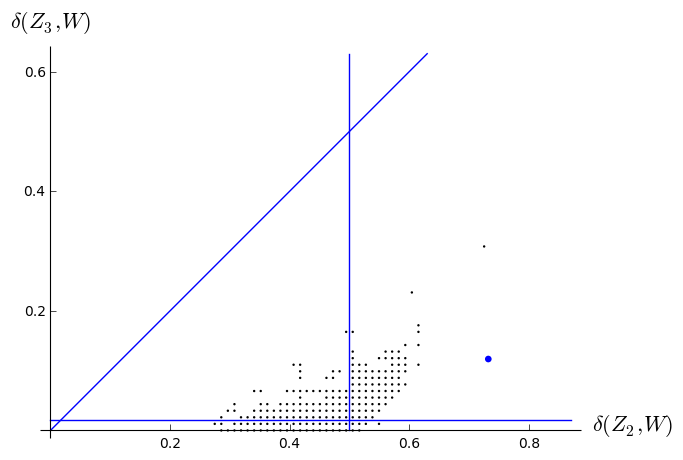} & \includegraphics[width=203px]{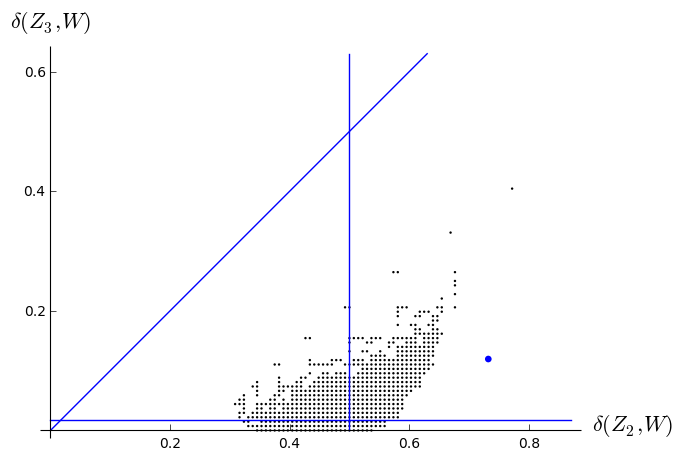} \\
		\includegraphics[width=203px]{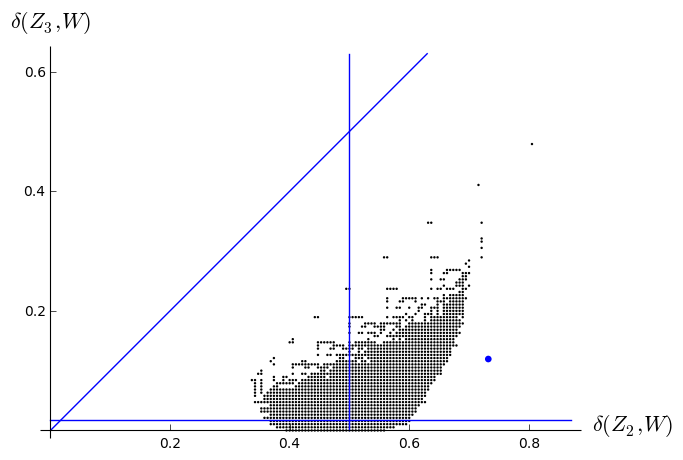} & \includegraphics[width=203px]{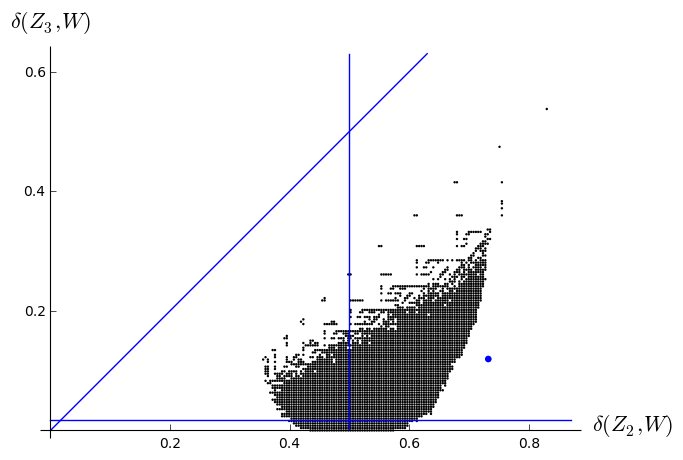} \\
		\includegraphics[width=203px]{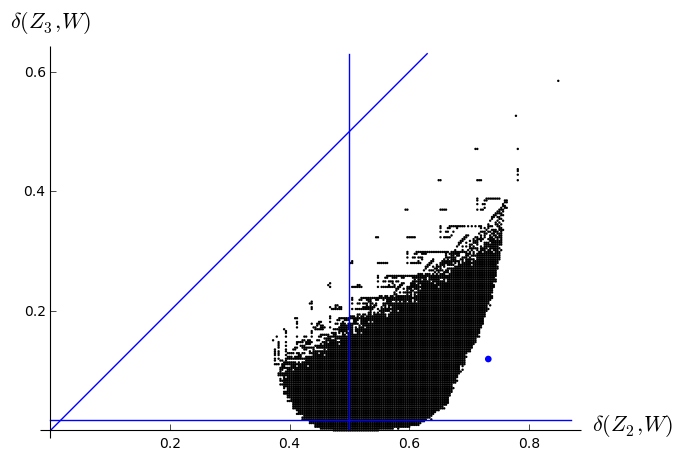} & \includegraphics[width=203px]{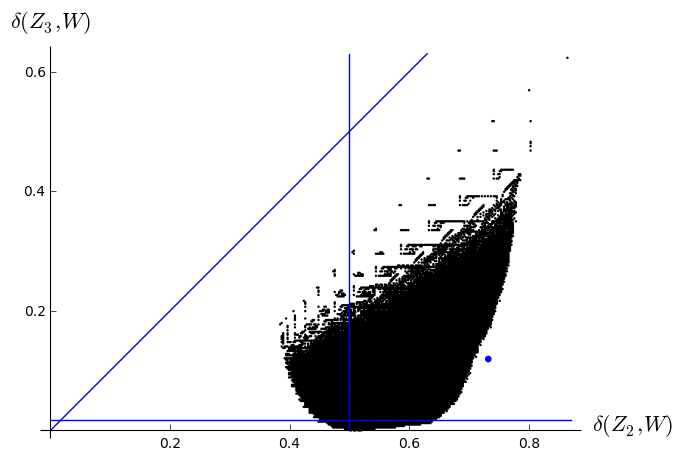} \\
	\end{tabular}

	\caption{$(\delta(Z_2,W),\delta(Z_3,W))$ for binary $W$ of length \{13,16,19,22,25,28\}.} \label{figure:Z2Z3}
	
\end{threeparttable}
\end{figure}

\chapter{Proofs and Computations for Chapter~\ref{ASYMP}}

\section{Proofs of Monotonicity}

\begin{lem} \label{lemF}
	For fixed $q \geq 2$, $\left\{|F(i)|\right\}_{i=0}^{\infty}$ is a decreasing sequence, where \[F(i) = F^q(i) = \frac{(-1)^jq^{1-2^i}}{\prod_{k=0}^{i} (1 - q^{1-2^k})}.\]
\end{lem}

\begin{proof}
	For $i>0$:
	\begin{eqnarray*}
		\frac{|F(i)|}{|F(i-1)|}  & = & \frac{q^{1 - 2^i}}{q^{1 - 2^{(i-1)}}\left(1 - q^{1 - 2^i}\right)}\\
		 & = & \frac{q^{-2^{(i-1)}}}{1 - q^{1 - 2^i}}\cdot \frac{1 + q^{1 - 2^i}}{1 + q^{1 - 2^i}}\\
		 & = & \frac{q^{-2^{(i-1)}}\left(1 + q^{1 - 2^i}\right)}{1 + q^{2 - 2^{i+1}}}\\
		 & < & \frac{(2)^{-2^{((1)-1)}}\left(1 + (2)^{1 - 2^{(1)}}\right)}{1 +(0)}\\
		 & = & 2^{-1}\left(1 + 2^{1 - 2}\right)\\
		 & < & 1.
	\end{eqnarray*}
\end{proof}

\begin{lem} \label{lemGH}
For fixed $\ell \geq 1$ and $q \geq 2$, $\left\{|G(i)|\right\}_{i=1}^{\infty}$ and $\left\{|H(i)|\right\}_{i = 1}^\infty$ are both decreasing sequences, where
\begin{eqnarray*}
	G(i)  = G_\ell^q(i) &=& \frac{ (-1)^i r\!\left(q^{-2^{i+1}}\right) \prod_{j = 0}^{i-1} s\!\left(q^{-2^{j+1}}\right)}{ \prod_{k = 0}^i \left(1 - q^{1-2^{k+1}}\right)};\\
	r(x) = r_\ell^q(x) & = & qx^{2\ell+1} - x^{4\ell} + x^{5\ell} - qx^{5\ell+1} +x^{6\ell};\\
	s(x) = s_\ell^q(x) & = & 1 - qx^{1-\ell} + x^{-\ell};\\
	H(i) = H_\ell^q(i) &=& \frac{ (-1)^i u\!\left(q^{-2^{i+1}}\right) \prod_{j = 0}^{i-1} v\!\left(q^{-2^{j+1}}\right) }{ \prod_{k = 0}^i \left(1 - q^{1-2^{k+1}}\right)};\\
	u(x) = u_\ell^q(x) & = & qx^{4\ell+1} - x^{5\ell} + qx^{5\ell+1} - x^{6\ell};\\
	v(x) = v_\ell^q(x) & = & 1 - qx^{1-\ell} + x^{-\ell} - qx^{1-2\ell} + x^{-2\ell}.
\end{eqnarray*}

\end{lem}

\begin{proof}
For $i>0$:
\begin{eqnarray*}
	\frac{|G(i)|}{|G(i-1)|}  & = & \frac{r\!\left(q^{-2^{i+1}}\right)}{r\!\left(q^{-2^i}\right)}\cdot \frac{s\!\left(q^{-2^i}\right)}{1 - q^{1-2^{i+1}}}\\
	& = & \frac{ q^{1-2^i(4\ell+2)} - q^{-2^i(8\ell)} + q^{-2^i(10\ell)} - q^{1-2^i(10\ell+2)} + q^{-2^i(12\ell)} }{ q^{1-2^i(2\ell+1)} - q^{-2^i(4\ell)} + q^{-2^i(5\ell)} - q^{1-2^i(5\ell+1)} + q^{-2^i(6\ell)} }\\
	& &\cdot \frac{ 1 - q^{1+2^i(\ell-1)} + q^{2^i\ell} }{ 1 - q^{1-2^i(2)} }\\
	& < & \frac{ q^{1-2^i(4\ell+2)} }{ q^{1-2^i(2\ell+1)} - q^{-2^i(4\ell)} }\cdot \frac{q^{2^i\ell} }{ 1 - q^{1-2^i(2)} }\\
	& = & \frac{ q^{1-2^i(3\ell+2)} }{ q^{1-2^i(2\ell+1)} - q^{-2^i(4\ell)} - q^{2-2^i(2\ell+3)} + q^{1-2^i(4\ell+2)}}\cdot \frac{ q^{-1+2^i(2\ell+1)} }{ q^{-1+2^i(2\ell+1)} }\\
	& = & \frac{ q^{-2^i(\ell+1)} }{ 1 - q^{-1-2^i(2\ell-1)} - q^{1-2^i(2)} + q^{2^i(2\ell+1)} }\\
	& < & \frac{ (2)^{-2^1((1)+1)} }{ 1 - (2)^{-1-2^1(2(1)-1)} - (2)^{1-2^1(2)} + 0 }\\
	& = & \frac{ 2^{-4} }{ 1 - 2^{-3} - 2^{-3} }\\
	& < & 1;\\
\\
	\frac{|H(i)|}{|H(i-1)|} & = & \frac{u\!\left(q^{-2^{i+1}}\right)}{u\!\left(q^{-2^i}\right)}\cdot \frac{v\!\left(q^{-2^i}\right)}{1 - q^{1-2^{i+1}}}\\
	& = & \frac{ q^{1-2^i(8\ell+2)} - q^{-2^i(10\ell)} + q^{1-2^i(10\ell+2)} - q^{-2^i(12\ell)} }{ q^{1-2^i(4\ell+1)} - q^{-2^i(5\ell)} + q^{1-2^i(5\ell+1)} - q^{-2^i(6\ell)} }\\
	& &\cdot \frac{ 1 - q^{1+2^i(\ell-1)} + q^{2^i\ell} - q^{1+2^i(2\ell - 1)} + q^{2^i(2\ell)} }{ 1 - q^{1-2^i(2)} }\\
	& < & \frac{ q^{1-2^i(8\ell+2)} }{ q^{1-2^i(4\ell+1)} - q^{-2^i(5\ell)} }\cdot \frac{q^{2^i(2\ell)} }{ 1 - q^{1-2^i(2)} }\\
	& = & \frac{ q^{1-2^i(6\ell+2)} }{ q^{1-2^i(4\ell+1)} - q^{-2^i(5\ell)} - q^{2-2^i(4\ell+3)} + q^{1-2^i(5\ell+2)}}\cdot \frac{ q^{-1+2^i(4\ell+1)} }{ q^{-1+2^i(4\ell+1)} }\\
	& = & \frac{ q^{-2^i(2\ell+1)} }{ 1 - q^{-1-2^i(\ell-1)} - q^{1-2^i(2)} + q^{2^i(\ell+1)} }\\
	& < & \frac{ (2)^{-2^1(2(1)+1)} }{ 1 - (2)^{-1-2^1((1)-1)} - (2)^{1-2^1(2)} + 0 }\\
	& = & \frac{ 2^{-6} }{ 1 - 2^{-1} - 2^{-3} }\\
	& < & 1.
\end{eqnarray*}

\end{proof}

\section{Sage Code for Table~\ref{table:IZ3} of \texorpdfstring{$\II(Z_3,q)$-Values}{Values of the Asymptotic Density of Z3}} \label{P3code}

The following code to generate Table~\ref{table:IZ3} was run with Sage 6.1.1 \parencite{S-14}.


\begin{lstlisting}[frame=single]
# Calculate G(i), term i of expanded g(q^(-2)).
def r(L, q, x):
    X = x^L
    return q*x*X^2 - X^4 + X^5 - q*x*X^5 + X^6
def s(L, q, x):
    return 1 - q*x^(1-L) + x^(-L)
def G(L, q, i):
    num = prod([s(L, q, q^(-2^(j+1))) for j in range(i)])
    den = prod([1 - q^(1-2^(k+1)) for k in range(i+1)])
    return (-1)^i * r(L, q, q^(-2^(i+1))) * num / den
# Calculate H(i), term i of expanded h(q^(-2)).
def u(L, q, x):
    return q*x^(4*L+1) - x^(5*L) + q*x^(5*L+1) - x^(6*L)
def v(L, q, x):
    return 1 - q*x^(1-L) + x^(-L) - q*x^(1-2*L) + x^(-2*L)
def H(L, q, i):
    num = prod([v(L, q, q^(-2^(j+1))) for j in range(i)])
    den = prod([1 - q^(1-2^(k+1)) for k in range(i+1)])
    return (-1)^i * u(L, q, q^(-2^(i+1))) * num / den
# Generate the first N terms of {a_n}.
def a(q,N):
    A = [0,q]
    for n in range(2, N+1):
        A.append(q*A[-1] - ((n+1)%2)*A[floor(n/2)])
    return A
# Calculate the partial sum of I(Z_3, q).
def I(q, N, M):
    A = a(q, N)
    partial = 0
    for L in range(1, N+1):
        terms = [G(L, q, n) + H(L, q, n) for n in range(M+1)]
        partial += A[L]*sum(terms)
    return partial
# Output bounds on I(Z_3, q) for small values of q.
prec = 15 # Level of precision.
N = 2*prec
for q in range(2, 7):
	print 'q = %d:' %q
	print 'Lower bound with N = %d and M = 4:' %N, 
	print round(I(q, N, 4), prec)
	print 'Upper bound with N = %d and M = 5:' %N, 
	print round(I(q, N, 5) + 2^(-N), prec)
\end{lstlisting}

\chapter{Word Trees Illustrating Theorem \ref{P3}} \label{TREES}

From Section \ref{IZ3}: ``For fixed bifix-free word $L$ length $\ell$, define $b_m^\ell$ to count the number of $Z_2$ words with bifix $L$ that are $Z_2$-bifix-free $q$-ary words of length $m$.''

In each of the following images, word is struck through if it in not counted by $b_m$ but its descendants are. It is hashed through if its descendants are also eliminated.

	\begin{figure}[ht]
\centering
\begin{threeparttable}

\begin{tabular}{l}

\begin{tikzpicture}[grow'=right,level distance = 53pt,sibling distance=-5pt,every node/.style={scale=.75}]
	\Tree [.{$b$_3^1 = 2} [.{$b$_4^1 = 3} [.{$b$_5^1 = 6} [.{$b$_6^1=14} [.{$b$_7^1=25} [.{$b$_8^1 = 52} {$b$_9^1 = 100} ] ] ] ] ] ];
\end{tikzpicture}
\\
\begin{tikzpicture}[grow'=right,level distance=55pt, sibling distance=-4pt,every node/.style={scale=.75}]
	\Tree [.000
			[.\sout{0000} 
				[.\sout{00000} 
					[.\xout{000000} ]
					[.000100 
						[.0000100 
							[.00000100 
								[.000000100 ]
								[.000010100 ]
							]
							[.00001100 
								[.000001100 ]
								[.000011100 ]
							]
						]
						[.0001100 
							[.00010100 
								[.000100100 ]
								[.000110100 ]
							]
							[.00011100 
								[.000101100 ]
								[.000111100 ]
							]
						]
					]
				]
				[.00100 
					[.001000 
						[.0010000 
							[.00100000 
								[.001000000 ]
								[.001010000 ]
							]
							[.00101000 
								[.001001000 ]
								[.001011000 ]
							]
						]
						[.0011000 
							[.00110000 
								[.001100000 ]
								[.001110000 ]
							]
							[.00111000 
								[.001101000 ]
								[.001111000 ]
							]
						]
					]
					[.001100 
						[.0010100 
							[.\sout{00100100} 
								[.\sout{001000100} ]
								[.001010100 ]
							]
							[.00101100 
								[.001001100 ]
								[.001011100 ]
							]
						]
						[.0011100 
							[.00110100 
								[.001100100 ]
								[.001110100 ]
							]
							[.00111100 
								[.001101100 ]
								[.001111100 ]
							]
						]
					]
				]
			] 
			[.0010
				[.00010 
					[.000010 
						[.0000010 
							[.00000010 
								[.000000010 ]
								[.000010010 ]
							]
							[.00001010 
								[.000001010 ]
								[.000011010 ]
							]
						]
						[.0001010 
							[.00010010 
								[.\sout{000100010} ]
								[.000110010 ]
							]
							[.00011010 
								[.000101010 ]
								[.000111010 ]
							]
						]
					]
					[.000110 
						[.0000110 
							[.00000110 
								[.000000110 ]
								[.000010110 ]
							]
							[.00001110 
								[.000001110 ]
								[.000011110 ]
							]
						]
						[.0001110 
							[.00010110 
								[.000100110 ]
								[.000110110 ]
							]
							[.00011110 
								[.000101110 ]
								[.000111110 ]
							]
						]
					]
				]
				[.00110 
					[.001010 
						[.\sout{0010010} 
							[.\xout{00100010} ]
							[.00101010 
								[.001001010 ]
								[.001011010 ]
							]
						]
						[.0011010 
							[.00110010 
								[.001100010 ]
								[.001110010 ]
							]
							[.00111010 
								[.001101010 ]
								[.001111010 ]
							]
						]
					]
					[.001110 
						[.0010110 
							[.00100110 
								[.001000110 ]
								[.001010110 ]
							]
							[.00101110 
								[.001001110 ]
								[.001011110 ]
							]
						]
						[.0011110 
							[.00110110 
								[.\sout{001100110} ]
								[.001110110 ]
							]
							[.00111110 
								[.001101110 ]
								[.001111110 ]
							]
						]
					]
				]
			 ]
		];
	\draw (1.58,0.19) rectangle (13,6.9);
	\draw (13,3.55)node[left]{$d_n^1$};
\end{tikzpicture}

\end{tabular}

\caption[Example word tree for Theorem \ref{P3} with $q=2, \ell = 1$.]{The `000' half of an example word tree for Theorem \ref{P3} with $q=2$, $L=\text{`0'}$, $\ell = |L| = 1$. The tree from LLLL counted by $d_n$ is boxed.} \label{figure:T21}

\end{threeparttable}
\end{figure}
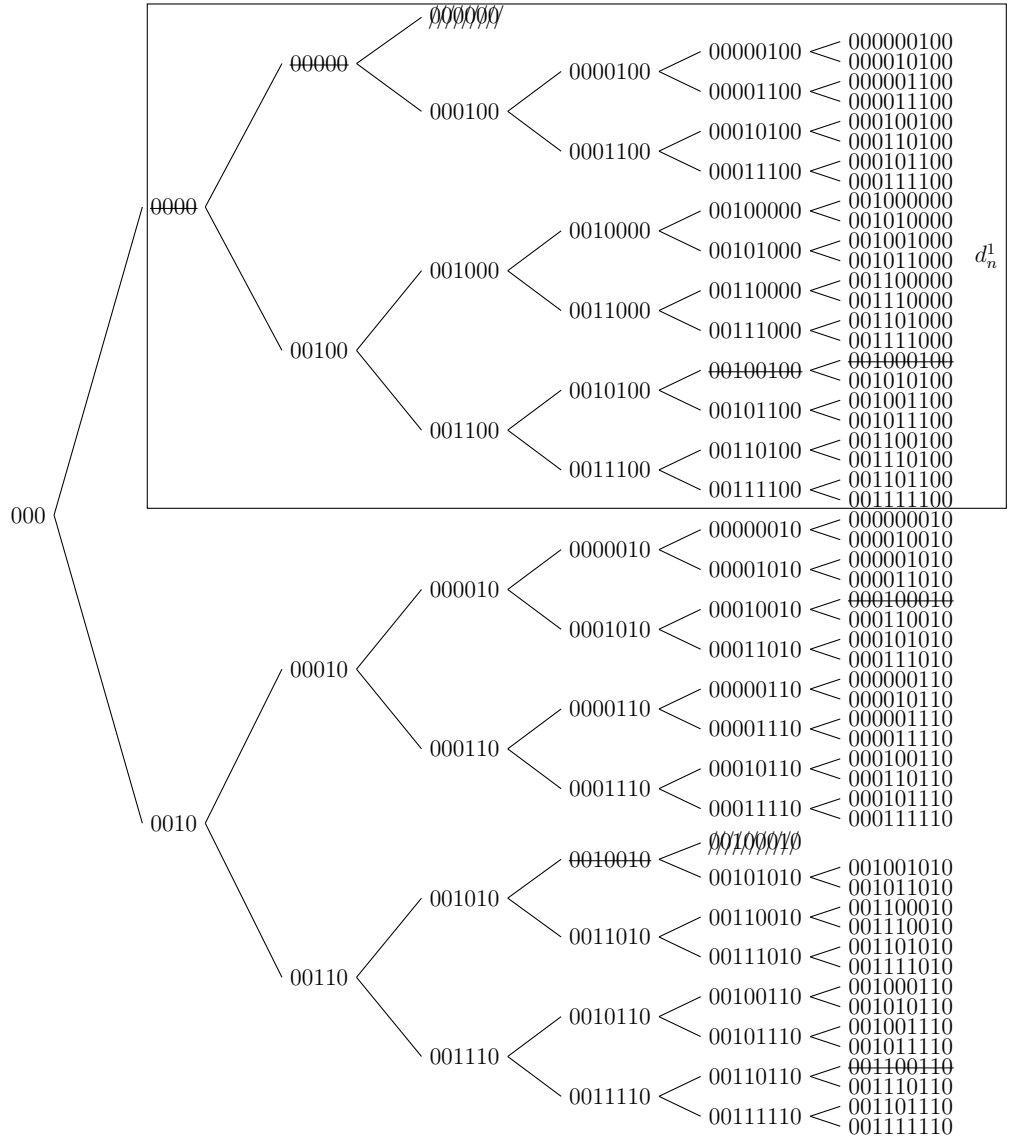
	\begin{figure}[ht]
\centering
\begin{threeparttable}

\begin{tabular}{l}

\begin{tikzpicture}[grow'=right,level distance = 60pt,sibling distance=-5pt,every node/.style={scale=.75}]
	\Tree [.{$b$_5^2 = 2} [.{$b$_5^2 = 4} [.{$b$_7^2 = 8} [.{$b$_8^2=13} [.{$b$_9^2=32} {$b$_{10}^2=58} ] ] ] ] ];
\end{tikzpicture}
\\
\begin{tikzpicture}[grow'=right,level distance=62pt, sibling distance=-5pt, every node/.style={scale=.75}]
	\Tree [.01001
			[.010001 
				[.0100001 
					[.01000001 
						[.010000001 
							[.0100000001 ]
							[.\sout{0100010001} ]
						]
						[.010010001 
							[.0100100001 ]
							[.0100110001 ]
						]
					]
					[.\sout{01001001} 
						[.010001001 
							[.0100001001 ]
							[.0100011001 ]
						]
						[.010011001 
							[.\xout{0100101001} ]
							[.0100111001 ]
						]
					]
				]
				[.0101001 
					[.01010001 
						[.010100001 
							[.0101000001 ]
							[.0101010001 ]
						]
						[.010110001 
							[.0101100001 ]
							[.0101110001 ]
						]
					]
					[.01011001 
						[.010101001 
							[.0101001001 ]
							[.0101011001 ]
						]
						[.010111001 
							[.0101101001 ]
							[.0101111001 ]
						]
					]
				]
			]
			[.010101 
				[.0100101 
					[.01000101 
						[.010000101 
							[.0100000101 ]
							[.0100010101 ]
						]
						[.010010101 
							[.0100100101 ]
							[.0100110101 ]
						]
					]
					[.01001101 
						[.010001101 
							[.0100001101 ]
							[.0100011101 ]
						]
						[.010011101 
							[.0100101101 ]
							[.0100111101 ]
						]
					]
				]
				[.0101101 
					[.\sout{01010101} 
						[.010100101 
							[.0101000101 ]
							[.\sout{0101010101} ]
						]
						[.010110101 
							[.0101100101 ]
							[.0101110101 ]
						]
					]
					[.01011101 
						[.010101101 
							[.0101001101 ]
							[.0101011101 ]
						]
						[.010111101 
							[.0101101101 ]
							[.0101111101 ]
						]
					]
				]
			]
		] ;

	\draw (5.85,-1.75) rectangle (12.3,-2.68);
	\draw (12.3,-2.2) node[left]{$d_n^2$};
\end{tikzpicture}
\\
\begin{tikzpicture}[grow'=right,level distance=62pt, sibling distance=-5pt, every node/.style={scale=.75}]
	\Tree [.01101
			[.011001 
				[.0110001 
					[.01100001 
						[.011000001 
							[.0110000001 ]
							[.0110010001 ]
						]
						[.011010001 
							[.0110100001 ]
							[.0110110001 ]
						]
					]
					[.01101001 
						[.011001001 
							[.0110001001 ]
							[.\sout{0110011001} ]
						]
						[.011011001 
							[.0110101001 ]
							[.0110111001 ]
						]
					]
				]
				[.0111001 
					[.01110001 
						[.011100001 
							[.0111000001 ]
							[.0111010001 ]
						]
						[.011110001 
							[.0111100001 ]
							[.0111110001 ]
						]
					]
					[.01111001 
						[.011101001 
							[.0111001001 ]
							[.0111011001 ]
						]
						[.011111001 
							[.0111101001 ]
							[.0111111001 ]
						]
					]
				]
			]
			[.011101 
				[.0110101 
					[.01100101 
						[.011000101 
							[.0110000101 ]
							[.0110010101 ]
						]
						[.011010101 
							[.0110100101 ]
							[.0110110101 ]
						]
					]
					[.\sout{01101101} 
						[.011001101 
							[.0110001101 ]
							[.0110011101 ]
						]
						[.011011101 
							[.\xout{0110101101} ]
							[.0110111101 ]
						]
					]
				]
				[.0111101 
					[.01110101 
						[.011100101 
							[.0111000101 ]
							[.0111010101 ]
						]
						[.011110101 
							[.0111100101 ]
							[.0111110101 ]
						]
					]
					[.01111101 
						[.011101101 
							[.0111001101 ]
							[.\sout{0111011101} ]
						]
						[.011111101 
							[.0111101101 ]
							[.0111111101 ]
						]
					]
				]
			]
		]
\end{tikzpicture}

\end{tabular}

\caption[Example word tree for Theorem \ref{P3} with $q=2, \ell = 2$.]{Example word tree for Theorem \ref{P3} with $q=2$, $L=\text{`01'}$, $\ell = |L| = 2$. The tree from LLLL counted by $d_n$ is boxed.} \label{figure:T22}

\end{threeparttable}
\end{figure}
	\begin{figure}[ht]
\centering
\begin{threeparttable}

\begin{tabular}{l}

\begin{tikzpicture}[grow'=right,level distance = 59pt,sibling distance=-5pt,every node/.style={scale=.75}]
	\Tree [.{$b$_7^3 = 2} [.{$b$_8^3 = 4} [.{$b$_9^3 = 8} [.{$b$_{10}^3 = 16} [.{$b$_{11}^3 = 30} {$b$_{12}^3 = 63} ]]]]];
\end{tikzpicture}
\\
\begin{tikzpicture}[grow'=right,level distance=60pt,sibling distance=-3pt,every node/.style={scale=.75}]
	\Tree [.1000100 
			[.10000100 
				[.100000100 
					[.1000000100 
						[.10000000100 
							[.100000000100 ]
							[.100000100100 ]
						]
						[.10000100100 
							[.100001000100 ]
							[.100001100100 ]
						]
					]
					[.1000010100 
						[.10000010100 
							[.100000010100 ]
							[.100000110100 ]
						]
						[.10000110100 
							[.100001010100 ]
							[.100001110100 ]
						]
					]
				]
				[.100010100 
					[.1000100100 
						[.\sout{10001000100} 
							[.100010000100 ]
							[.100010100100 ]
						]
						[.10001100100 
							[.100011000100 ]
							[.100011100100 ]
						]
					]
					[.1000110100 
						[.10001010100 
							[.100010010100 ]
							[.100010110100 ]
						]
						[.10001110100 
							[.100011010100 ]
							[.100011110100 ]
						]
					]
				]
			]
			[.10001100 
				[.100001100 
					[.1000001100 
						[.10000001100 
							[.100000001100 ]
							[.100000101100 ]
						]
						[.10000101100 
							[.100001001100 ]
							[.100001101100 ]
						]
					]
					[.1000011100 
						[.10000011100 
							[.100000011100 ]
							[.100000111100 ]
						]
						[.10000111100 
							[.100001011100 ]
							[.100001111100 ]
						]
					]
				]
				[.100011100 
					[.1000101100 
						[.10001001100 
							[.100010001100 ]
							[.100010101100 ]
						]
						[.10001101100 
							[.100011001100 ]
							[.100011101100 ]
						]
					]
					[.1000111100 
						[.10001011100 
							[.100010011100 ]
							[.100010111100 ]
						]
						[.10001111100 
							[.100011011100 ]
							[.100011111100 ]
						]
					]
				]
			]
		]
\end{tikzpicture}
\\
\begin{tikzpicture}[grow'=right,level distance=60pt,sibling distance=-3pt,every node/.style={scale=.75}]
	\Tree [.1001100 
			[.10010100 
				[.100100100 
					[.1001000100 
						[.10010000100 
							[.100100000100 ]
							[.\sout{100100100100} ]
						]
						[.10010100100 
							[.100101000100 ]
							[.100101100100 ]
						]
					]
					[.1001010100 
						[.10010010100 
							[.100100010100 ]
							[.100100110100 ]
						]
						[.10010110100 
							[.100101010100 ]
							[.100101110100 ]
						]
					]
				]
				[.100110100 
					[.1001100100 
						[.10001000100
							[.100110000100 ]
							[.100110100100 ]
						]
						[.10011100100 
							[.100111000100 ]
							[.100111100100 ]
						]
					]
					[.1001110100 
						[.10011010100 
							[.100110010100 ]
							[.100110110100 ]
						]
						[.10011110100 
							[.100111010100 ]
							[.100111110100 ]
						]
					]
				]
			]
			[.10011100 
				[.100101100 
					[.1001001100 
						[.10010001100 
							[.100100001100 ]
							[.100100101100 ]
						]
						[.10010101100 
							[.100101001100 ]
							[.100101101100 ]
						]
					]
					[.1001011100 
						[.10010011100 
							[.100100011100 ]
							[.100100111100 ]
						]
						[.10010111100 
							[.100101011100 ]
							[.100101111100 ]
						]
					]
				]
				[.100111100 
					[.1001101100 
						[.\sout{10011001100} 
							[.100110001100 ]
							[.100110101100 ]
						]
						[.10011101100 
							[.100111001100 ]
							[.100111101100 ]
						]
					]
					[.1001111100 
						[.10011011100 
							[.100110011100 ]
							[.100110111100 ]
						]
						[.10011111100 
							[.100111011100 ]
							[.100111111100 ]
						]
					]
				]
			]
		];

	\draw (9.55,4.29) rectangle (12,4.64);
	\draw (12,4.47) node[left] {$d_n^3$};
\end{tikzpicture}

\end{tabular}

\caption[Example word tree for Theorem \ref{P3} with $q=2, \ell = 3$.]{Example word tree for Theorem \ref{P3} with $q=2$, $L=\text{`100'}$, $\ell = |L| = 3$. The tree from LLLL counted by $d_n$ is boxed.} \label{figure:T23}

\end{threeparttable}
\end{figure}
	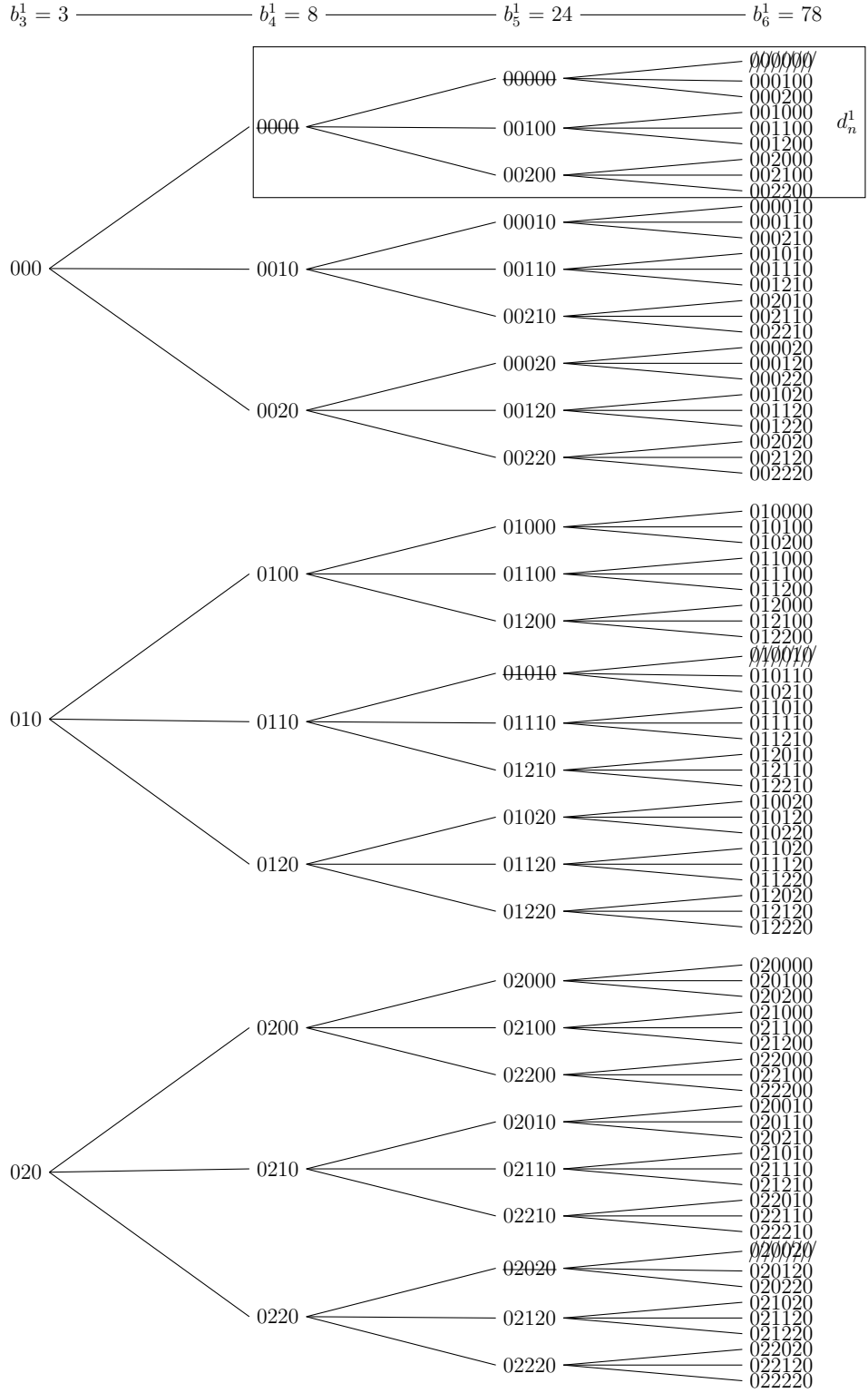
\begin{figure}[ht]
\centering
\begin{threeparttable}

\begin{tabular}{l}

\begin{tikzpicture}[grow'=right,level distance = 104pt,sibling distance=-5pt,every node/.style={scale=.75}]
	\Tree [.{$b$_3^{1} = 3} [.{$b$_4^{1} = 8} [.{$b$_5^{1} = 24} {$b$_6^1 = 78} ] ] ];
\end{tikzpicture}
\\
\begin{tikzpicture}[grow'=right,level distance = 105pt,sibling distance=-5pt,every node/.style={scale=.75}]
	\Tree [.000 
			[.\sout{0000} 
				[.\sout{00000} 
					[.\xout{000000} ]
					[.000100 ]
					[.000200 ]
				]
				[.00100 
					[.001000 ]
					[.001100 ]
					[.001200 ]
				]
				[.00200 
					[.002000 ]
					[.002100 ]
					[.002200 ]
				]
			]
			[.0010 
				[.00010 
					[.000010 ]
					[.000110 ]
					[.000210 ]
				]
				[.00110 
					[.001010 ]
					[.001110 ]
					[.001210 ]
				]
				[.00210 
					[.002010 ]
					[.002110 ]
					[.002210 ]
				]
			]
			[.0020 
				[.00020 
					[.000020 ]
					[.000120 ]
					[.000220 ]
				]
				[.00120 
					[.001020 ]
					[.001120 ]
					[.001220 ]
				]
				[.00220 
					[.002020 ]
					[.002120 ]
					[.002220 ]
				]
			]
		];

	\draw (3.33,1.14) rectangle (12.3,3.35);
	\draw (12.3,2.25) node[left]{$d_n^1$};
\end{tikzpicture}
\\
\begin{tikzpicture}[grow'=right,level distance = 105pt,sibling distance=-5pt,every node/.style={scale=.75}]
	\Tree [.010 
			[.0100 
				[.01000 
					[.010000 ]
					[.010100 ]
					[.010200 ]
				]
				[.01100 
					[.011000 ]
					[.011100 ]
					[.011200 ]
				]
				[.01200 
					[.012000 ]
					[.012100 ]
					[.012200 ]
				]
			]
			[.0110 
				[.\sout{01010} 
					[.\xout{010010} ]
					[.010110 ]
					[.010210 ]
				]
				[.01110 
					[.011010 ]
					[.011110 ]
					[.011210 ]
				]
				[.01210 
					[.012010 ]
					[.012110 ]
					[.012210 ]
				]
			]
			[.0120 
				[.01020 
					[.010020 ]
					[.010120 ]
					[.010220 ]
				]
				[.01120 
					[.011020 ]
					[.011120 ]
					[.011220 ]
				]
				[.01220 
					[.012020 ]
					[.012120 ]
					[.012220 ]
				]
			]
		]
\end{tikzpicture}
\\
\begin{tikzpicture}[grow'=right,level distance = 105pt,sibling distance=-5pt,every node/.style={scale=.75}]
	\Tree [.020 
			[.0200 
				[.02000 
					[.020000 ]
					[.020100 ]
					[.020200 ]
				]
				[.02100 
					[.021000 ]
					[.021100 ]
					[.021200 ]
				]
				[.02200 
					[.022000 ]
					[.022100 ]
					[.022200 ]
				]
			]
			[.0210 
				[.02010 
					[.020010 ]
					[.020110 ]
					[.020210 ]
				]
				[.02110 
					[.021010 ]
					[.021110 ]
					[.021210 ]
				]
				[.02210 
					[.022010 ]
					[.022110 ]
					[.022210 ]
				]
			]
			[.0220 
				[.\sout{02020} 
					[.\xout{020020} ]
					[.020120 ]
					[.020220 ]
				]
				[.02120 
					[.021020 ]
					[.021120 ]
					[.021220 ]
				]
				[.02220 
					[.022020 ]
					[.022120 ]
					[.022220 ]
				]
			]
		]
\end{tikzpicture}

\end{tabular}

\caption[Example word tree for Theorem \ref{P3} with $q=3, \ell = 1$.]{Example word tree for Theorem \ref{P3} with $q=3$, $L=\text{`0'}$, $\ell = |L| = 1$. The tree from LLLL counted by $d_n$ is boxed.} \label{figure:T31}

\end{threeparttable}
\end{figure}

%

\chapter{Notation Index} \label{NOTATION}

Generally, majuscule Greek letters are used for alphabets (especially $\Gamma,\Sigma$). Minuscule Greek $\varepsilon$ (``var-epsilon'') represents the empty word, whereas $\epsilon$ is used in proofs for arbitrarily-small positive real values; other minuscule Greek letters are used for monoid homomorphisms (especially $\phi,\psi$).

Frequently, minuscule Roman letters are used for letters in words (especially $a$, $b$, $c$, $d$, $t$, $u$, $v$, $w$, $x$, $y$, and $z$), variables (especially $a$, $b$, $c$, $d$, $i$, $j$, $k$, $\ell$, $m$, $n$, $p$, $q$, $r$, $t$, $u$, and $v$), or functions (especially $f$ and $g$); majuscule Roman letters are used for words (especially $S$, $T$, $U$, $V$, $W$, $X$, $Y$, and $Z$), variables (especially $M$ and $N$), or functions (especially $F$, $G$, and $H$). 
Natural numbers are also used for letters.

For notation established within a numbered definition in the text, the definition number is given in Table~\ref{table:notation} below.

\begin{table}[ht]
	\centering
	\begin{threeparttable}
	
		\caption{Notation used.} \label{table:notation}

\begin{tabular}{l | l | l}
	Notation & Meaning & Defined \\ \hline
	$\Z$ & The set of integers: $\{\ldots, -2, -1, 0, 1, \ldots \}$. \\
	$\Z^+$ & The set of positive integers: $\{1, 2, 3, \ldots \}$. \\
	$\N$ & The set of natural numbers: $\{0, 1, 2, 3, \ldots \}$. \\
	$f(n) \sim g(n)$ & $\lim_{n \rightarrow \infty} \frac{f(n)}{g(n)} = 1$. \\
	$f(n) = O(g(n))$ & There exists $c > 0$ so that $f(n) \leq cg(n)$. \\
	$f(n) \ll g(n)$ & $f(n) = O(g(n))$. \\
	$f(n) = o(g(n))$ & $\lim_{n \rightarrow \infty} \frac{f(n)}{g(n)} = 0$. \\
	$\Sigma^*$ & The set of finite $\Sigma$-words. & \ref{defn:word} \\
	$\Sigma^n$ & The set of length-$n$ $\Sigma$-words. & \ref{defn:word} \\
	$\varepsilon$ & The empty word. & \ref{defn:word}\\
	$[n]$ & The set $\{1, 2, \ldots, n\}$. & \\
	$w \in W$ & Letter $w$ occurs in word $W$. & \ref{defn:letter} \\
	$w^n$ & The word formed from $n$ copies of the letter $w$. & \ref{defn:letter}\\
	$|W|$ & The length of word $W$. & \ref{defn:letter} \\
	$\L(W)$ & The set of letters that occur in word $W$. & \ref{defn:letter} \\
	$||W||$ & The number of letter recurrences in word $W$. &  \ref{defn:letter} \\
	$W[i:j]$ & The factor of $W$ stretching from letter $i+1$ to $j$. & \ref{defn:factor} \\
	$V \leq W$ & Word $V$ is a factor of word $W$. & \ref{defn:factor} \\
	$V \preceq W$ & $W$ encounters $V$. & \ref{defn:instance}\\
	$Z_n$ & The $n$-th Zimin word. & \ref{defn:Zimin}\\
	$\f(n,q)$ & Least $M$ such that every word in $[q]^M$ encounters $Z_n$. & \ref{defn:f} \\
	${}^ba$ & Towering exponential $a^{\cdot^{\cdot^a}}$ with $b$ occurrences of $a$. & \\
	$\Inst_n(V,\Sigma)$ & The set of $W$-instances in $\Sigma^n$. & \ref{defn:I} \\
	$\II_n(V,q)$ & The proportion of words in $\Sigma^n$ that are $V$-instances & \ref{defn:I} \\
	$\EE( \cdot )$ & The expected value of a given random variable. & \\
	$\PP( \cdot )$ & The probability of a given event. & \\
	$\m(n,q)$ & The number of minimal $Z_n$-instances in $[q]^*$. & \ref{defn:min} \\
	$\d(V,W)$ & The factor density of word $V$ in word $W$. & \ref{defn:density} \\
	$\delta(V,W)$ & The (instance) density of word $V$ in word $W$. & \ref{defn:density}\\
	$\underline{\delta}(V,q)$ & The liminf density of word $V$ over $[q]$. & \ref{defn:density}\\
	$\delta_n(V,q)$ & The expected density of word $V$ in $W \in [q]^n$. & \ref{defn:density2}\\
	$\delta(V,q)$ & $\lim_{n \rightarrow \infty} \delta_n(V,q)$. & \ref{defn:density2}\\
	$\II(V,q)$ & $\lim_{n \rightarrow \infty} \II_n(V,q)$. & \ref{defn:density2} \\
	$\hom(V,W)$ & The number of $V$-encounters in $W$. & \ref{defn:hom} \\
	$\hom_n(V,q)$ & The expected number of $V$-encounters in $W \in [q]^n$. & \ref{defn:hom}\\
	$\delta_{sur}(V,W)$ & 1 if $W$ is a $V$-instance; 0 otherwise. & \ref{defn:sur} \\
\end{tabular}

	\end{threeparttable}
\end{table}

\end{document}